\title{Shotgun reconstruction in the hypercube}
\author{Micha{\l} Przykucki\thanks{School of Mathematics, University of Birmingham, Edgbaston, Birmingham, United Kingdom. Supported by the EPSRC grant EP/P026729/1. \newline  E-mail: \texttt{m.j.przykucki@bham.ac.uk}.} \and Alexander Roberts\thanks{Mathematical Institute, University of Oxford, Andrew Wiles Building, Radcliffe Observatory Quarter, Woodstock Road, Oxford, United Kingdom. \newline  E-mail: \texttt{\{robertsa, scott\}@maths.ox.ac.uk}.} \and Alex Scott\footnotemark[2] \thanks{Supported by a Leverhulme Trust Research Fellowship.}}
\newtheoremstyle{case}{}{}{\normalfont}{}{shape}{:}{ }{}
\newtheorem{thm}{Theorem}[section]
\newtheorem{lem}[thm]{Lemma}
\newtheorem{prop}[thm]{Proposition}
\newtheorem{cor}[thm]{Corollary}
\theoremstyle{definition}
\newtheorem{defn}[thm]{Definition}
\newtheorem{qu}[thm]{Question}
\numberwithin{equation}{section}
\newtheoremstyle{case}{}{}{\normalfont}{}{shape}{\normalfont:}{ }{}
\theoremstyle{case}
\def\comment#1{}
\newcommand{\beq}{\begin{eqnarray*}}
\newcommand{\eeq}{\end{eqnarray*}}
\def\build#1_#2^#3{\mathrel{\mathop{\kern 0pt#1}\limits_{#2}^{#3}}}                 
\newcommand{\bP}{\mathbb{P}}
\newcommand{\bN}{\mathbb{N}}
\newcommand{\cP}{\mathcal{P}}
\numberwithin{equation}{section}
\begin{document}

\maketitle

\begin{abstract}
\noindent
Mossel and Ross raised the question of when a random colouring of a graph can be reconstructed from local information, namely the colourings (with multiplicity) of balls of given radius.   In this paper, we are concerned with random $2$-colourings of the vertices of the $n$-dimensional hypercube, or equivalently random Boolean functions.  In the worst case, balls of diameter $\Omega(n)$ are required to reconstruct. However, the situation for {random} colourings is dramatically different: we show that almost every $2$-colouring can be reconstructed from the multiset of colourings  of balls of radius $2$.  Furthermore, we show that for $q \ge n^{2+\epsilon}$, almost every $q$-colouring can be reconstructed from the multiset of colourings of $1$-balls.
\end{abstract}

\section{Introduction}
The problem of reconstructing a graph from a collection of its subgraphs goes back to the famous \emph{reconstruction conjecture} of Kelly and Ulam (see \cite{K1,U1,H1}), which asserts that every graph $G$ on at least 3 vertices can be determined up to isomorphism from the multiset of its vertex-deleted subgraphs, i.e.~the graphs $G-v$ for all $v \in V(G)$. The conjecture has been confirmed for various classes of graphs, including trees, regular graphs and triangulations
(see Nash-Williams \cite{NashWill}, Bondy \cite{bondy} and Lauri and Scapellato \cite{LScap}).  
There has also been a substantial amount of work on the problem of reconstructing a graph, or some other combinatorial structure, from objects of smaller size (see for example, Alon, Caro, Krasikov and Roditty \cite{ACKR}, Pebody, Radcliffe and Scott \cite{PRS}, and Simon \cite{JanSim}).  

Recently, Mossel and Ross \cite{MR} investigated the problem of reconstructing a graph using {\em local} information.  Given a graph, when is it possible to reconstruct the graph up to isomorphism from the multiset of balls of radius $r$?  For graphs in which the vertices or edges are coloured (not necessarily properly), when is it possible to reconstruct the coloured graph from the multiset of coloured $r$-balls?  Motivated by the problems of reconstructing DNA sequences  from ``shotgunned'' stretches of the sequence, as well as neural networks from local subnetworks, they called this type of problem {\em shotgun reconstruction}.   

Mossel and Ross were particularly interested in reconstruction problems where the graph or colouring is {\em random}.  
Reconstructing random objects usually requires much less information than reconstructing in the worst case (see, for example, Bollob\'as \cite{bolnum3} and Radcliffe and Scott \cite{radsco}). Mossel and Ross \cite{MR} proved results on reconstructing sparse random graphs in the $\mathcal G(n,p)$ model, while Mossel and Sun \cite{MS} proved rather sharp bounds on the smallest radius $r$ needed to reconstruct random regular graphs.   Mossel and Ross also considered the problem of reconstructing randomly coloured trees, randomly coloured lattices in any fixed number of dimensions, and the \emph{random jigsaw puzzle problem}, in which the edges of the  $n \times n$ square lattice are randomly coloured with $q$ colours, and the problem is to determine for which $q$ it is possible to reconstruct the original jigsaw from the collection of $1$-balls.  The random jigsaw puzzle problem has since been studied by Bordenave, Feige and Mossel \cite{BFM}, Nenadov, Pfister, and Steger \cite{NPS}, Balister, Bollob\'{a}s, and Narayanan \cite{BBN}, and by Martinsson \cite{M1}.
 
In this paper, we will be interested in shotgun assembly for
vertex-colourings of the $n$-dimensional hypercube $Q_n$. We begin by discussing 2-colourings, or equivalently Boolean functions. In the worst case, it is easy to see that balls of radius at least $n/2-O(1)$ are necessary (consider the two colourings where all points are in colour $1$, except for two points at Hamming distance either $n$ or $n-1$ which have colour $2$).  However for random colourings the situation is dramatically different.  As we shall see, it is not hard to show that for a random $2$-colouring, balls of radius $3$ are almost surely enough for reconstruction, while balls of radius $1$ are not.  The first main result of this paper is that balls of radius $2$ are sufficient.
 
 \begin{thm}
 Almost every $2$-colouring of the hypercube $Q_n$ is reconstructible from the multiset of its coloured $2$-balls.
 \end{thm}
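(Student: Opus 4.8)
The plan is as follows. A coloured $2$-ball records the colour of its centre $v$, the colours of the $n$ neighbours of $v$, and the colours of the $\binom n2$ vertices at distance exactly $2$ from $v$ — but only up to the action of $S_n$ permuting the coordinate directions (and up to the sign changes needed to move $v$ to a fixed point). I would reconstruct by an ``anchor and propagate'' strategy. The first ingredient is a second moment computation: if $d(u,v)\ge 5$ then $B_2(u)$ and $B_2(v)$ are disjoint, so the probability that the coloured $2$-balls at $u$ and $v$ have the same type is at most $n!\,2^{-(1+n+\binom n2)}$, while for $d(u,v)\le 4$ one conditions on the colouring of the (at most $2n$-vertex) overlap and notes that $\binom{n-1}2=\Omega(n^2)$ vertices of $B_2(u)$ remain free; summing over the $\le 2^{2n}$ pairs shows that a.a.s.\ the $2^n$ coloured $2$-balls are pairwise distinct. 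A similar estimate shows that only a $O(n^2 2^{-n})$ fraction of $2$-balls can fail to be \emph{rigid} (i.e.\ have a nontrivial automorphism as a rooted coloured graph, equivalently a nontrivial stabiliser in $S_n$). Hence a.a.s.\ we may fix a vertex $v_0$ whose $2$-ball is both rigid and the unique occurrence of its type, declare $v_0$ to be the origin, and use the rigidity to pin down a coordinate labelling; this identifies $c$ on $B_2(v_0)$ with absolute coordinates.

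Now suppose we have reconstructed $c$ on a region $R$ (with coordinates fixed) and we want to extend it. For a boundary vertex $w$, note that the coloured $2$-balls centred at the neighbours of $w$ that already lie in $R$ cover a large part of $B_2(w)$: the $2$-balls at the $t:=\operatorname{dist}(v_0,w)$ ``inward'' neighbours of $w$ together determine $c$ on $N[w]$ and on all but the $\binom{n-t}2$ ``outward'' distance-$2$ vertices of $w$, i.e.\ on roughly $tn$ vertices of $B_2(w)$. Once $t\ge C\log n$ for a suitably large constant $C$, this is comfortably more than $n\log n$ vertices, which is enough: with $\omega(n\log n)$ coordinates of $B_2(w)$ known, a.a.s.\ both the coordinate alignment of $w$'s $2$-ball and the identity of that $2$-ball inside the multiset are forced, since $2^n\cdot n!\cdot 2^{-\omega(n\log n)}=o(2^{-n})$ and we may union bound over the $\le 2^n$ vertices $w$. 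Reading off $w$'s $2$-ball then yields $c$ on $B_2(w)$, and a.a.s.\ the breadth-first reconstruction sweeps out all of $Q_n$, recovering $c$ up to an element of $\cB_n$.

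The genuine obstacle is the \emph{core}: the argument above breaks down while $R=B_t(v_0)$ with $t\le C\log n$, because there the $2$-balls around boundary vertices overlap $R$ in only $\tilde O(n)$ vertices, far too few for the crude union bound over the multiset (already for $t=O(1)$ one has $2^n n!\,2^{-\Theta(n)}\not\to 0$), and too few to force the alignment of any single overlapping $2$-ball (the $\Theta(n)$ relevant colour-coordinates split into only a bounded number of classes, leaving residual coordinate symmetry). This is exactly where radius $2$ is delicate and where radius $1$ fails outright, while radius $3$ is easy (adjacent $3$-balls already overlap in $\Theta(n^2)$ vertices). To get through the core one should reconstruct it shell by shell while extracting far more mileage from consistency: a weight-$r$ vertex is adjacent to $r$ vertices of smaller weight, so its colour is over-determined by several mutually consistent $2$-balls, and within the core the number of rival $2$-balls that need to be excluded is only subexponential rather than $2^n$; carefully quantifying this — and checking that the rigidity of the $2$-ball at $v_0$ propagates outward through the core — is where the bulk of the technical work lies. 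Once $B_{C\log n}(v_0)$ is reconstructed, the propagation of the previous paragraph finishes the proof.
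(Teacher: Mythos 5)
Your strategy (anchor a rigid, uniquely-occurring $2$-ball, then BFS-propagate by matching overlaps) is genuinely different from the paper's: the paper never attempts a local reconstruction algorithm. Instead it works globally with the set $\mathrm{Isom}^{(2)}(\chi)$ of all bijections preserving the multiset of $2$-ball types, and shows w.h.p.\ every such bijection is an automorphism. The key structural tool is a stability version of Harper's vertex-isoperimetric inequality (Theorem~\ref{f-1-stability}): any set of $n$ vertices with near-minimal neighbourhood is nearly concentrated in one vertex's neighbourhood. Combined with a probabilistic ``small expansion'' lemma (Lemma~\ref{scott}), this forces any $f\in\mathrm{Isom}^{(2)}(\chi)$ to be approximately local, hence (Lemmas~\ref{g-stability}--\ref{krigid}) to carry large rigid subcubes around every vertex; a final union bound over non-adjacent pairs $u,v$ with $f(u)f(v)\in E$ shows such an $f$ cannot preserve $2$-ball types. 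None of the paper's argument is a shell-by-shell sweep, so you are on a different road.

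The difficulty is that your road has a genuine hole exactly where you flag it. The outer regime is fine in outline: once $t\ge C\log n$ you know $\Omega(n\log n)$ colours of $B_2(w)$, and $2^n\cdot 2^n\cdot n!\cdot 2^{-\Omega(n\log n)}=o(1)$ (with some care about the dependence between the known region, the BFS tree, and the candidate ball). The anchor step also works. But for the core $t<C\log n$ you only know $O(tn)=O(n\log n)$ colours at best, and at $t=1$ you know only about $2n$ colours of $B_2(w)$. A union bound over the $\sim 2^n n!$ rival (centre, isomorphism) pairs gives $2^{n+n\log_2 n-2n}$, which is enormous; and the pairwise-distance lemma analogous to Lemma~\ref{strongunique} only rules out rivals once you know all but $\Theta(n^2)$ of the ball, not $2n$ of it. Your assertion that ``within the core the number of rival $2$-balls that need to be excluded is only subexponential rather than $2^n$'' is not justified: the candidates still come from the whole multiset of $2^n$ balls, and consistency across the $t\le\log n$ inward neighbours only removes $O(n\log n)$ degrees of freedom, not $\Omega(n^2)$. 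You acknowledge this is ``where the bulk of the technical work lies,'' which is accurate — it is the whole ballgame, and your proposal does not contain an argument for it. The paper's success comes precisely from refusing to reason vertex-by-vertex inside small balls and using the global isoperimetric structure instead.
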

 
In fact, we prove a stronger result (Theorem \ref{main}), which allows imbalanced colourings in which one colour can have density as low at $n^{-1/4+o(1)}$.

  We also consider colourings with more than two colours.  In our other main result, we show that that for sufficiently large $q$ a random $q$-colouring can be reconstructed from its $1$-balls (see Theorem \ref{1ball} for a slightly stronger statement of this result). 
  
 \begin{thm}
Let $\epsilon>0$.  For $q\ge n^{2+\epsilon}$, almost every $q$-colouring of $Q_n$ is reconstructible from the multiset of its coloured 1-balls.
 \end{thm}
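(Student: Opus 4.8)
The plan is to show that, with high probability over the random colouring $c$, the colouring $c$ is the unique colouring of $Q_n$, up to an automorphism, whose multiset $\cD:=\{\!\{B_1^c(v):v\in Q_n\}\!\}$ of coloured $1$-balls is the prescribed one; since the host graph is known to be $Q_n$, this is exactly reconstructibility. Two deterministic features of $Q_n$ drive the argument. First, $Q_n$ has girth $4$, so each coloured $1$-ball is simply a star $K_{1,n}$ with a coloured centre (the unique vertex of degree $n$) and a coloured leaf-multiset, and the $1$-balls of adjacent vertices $u,v$ overlap only in the two colours $c(u),c(v)$. Second, $Q_n$ is rigid on the small scale: any two vertices at Hamming distance $2$ have exactly two common neighbours, and the neighbours of a vertex together with its distance-$2$ sphere (and the adjacencies between them) form the vertex--edge incidence structure of $K_n$. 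These facts are what allow local colour information to be upgraded to structural information.

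For the probabilistic input I would use $q\ge n^{2+\epsilon}$ twice. A fixed $1$-ball spans $\binom{n+1}{2}=O(n^2)$ pairs of vertices and so is rainbow with probability $1-O(n^{-\epsilon})$. More robustly, the $\Theta(n^2)$ vertices of a fixed $2$-ball use no colour $t$ or more times with failure probability at most $O(n^2(n^2/q)^{t-1})=O(n^{2-\epsilon(t-1)})$; hence, for any constant $C$, there is $t_0=t_0(\epsilon,C)$ such that, calling $v$ \emph{clean} when its $2$-ball uses every colour fewer than $t_0$ times, $\bP[v\text{ not clean}]\le n^{-C}$. Note that cleanness of $v$ bounds the colour-multiplicities in $B_1(w)$ for every neighbour $w$ of $v$, since $B_1(w)\subseteq B_2(v)$. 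Cleanness depends only on the colours in $B_2(v)$, so the events $\{v\text{ not clean}\}$ form a family with dependency graph $\{uv:d(u,v)\le 4\}$ of degree $n^{O(1)}$; a first-moment count of connected families of non-clean vertices in this dependency graph shows that, whp, the non-clean vertices fall into disjoint clusters each contained in a Hamming ball of radius $O(n/\log n)=o(n)$, hence each spanning only $2^{o(n)}$ vertices, and the clean region stays connected.

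The reconstruction is then by growth: start from a clean vertex (whp one exists), use the rainbow and rigidity facts to place its $1$-ball in $\cD$, and extend edge by edge across the clean region, at each step determining the $1$-ball sitting at a newly reached vertex $w$ from the overlap colours together with the forced incidence structure around $w$ (which becomes more constraining as more neighbours of $w$ are placed); the $t_0$-boundedness keeps the number of candidate balls bounded and the rigidity of $Q_n$ cuts it to one. One must then check that this growth is well-defined and covers the clean region consistently (growths from different seeds agreeing up to an automorphism), and finally that each non-clean cluster is filled in uniquely from the already-reconstructed colours on its (very large) vertex boundary and the residual multiset of unused $1$-balls.

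The main obstacle is the tension between the $2^n$ vertices of $Q_n$ and its merely polynomial degree $n$: no non-trivial local event can be made rare enough to survive a union bound over all vertices, so one cannot demand cleanness everywhere, and a positive fraction of vertices will genuinely lie in regions where some colour repeats. The delicate points are therefore (a) showing that reconstruction ambiguity is nonetheless forced to be \emph{local}, via a careful enumeration of obstruction patterns --- pairs of colourings agreeing on a neighbourhood, or local colour-swap configurations --- rather than of vertices, exploiting the automorphism group of $Q_n$ and the rigidity of its $4$-cycles; and (b) propagating the reconstruction through mildly contaminated regions and filling in the non-clean clusters uniquely, using that small subsets of $Q_n$ have enormous vertex boundary. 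I expect (b), and in particular controlling how the few repeated colours inside a contaminated region interact with its boundary, to be where the bulk of the work lies.
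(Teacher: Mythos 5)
Your proposal takes a genuinely different route from the paper's, and in its present form it has real gaps at exactly the two spots you flag as ``delicate.'' The paper does not attempt a reconstruction-by-growth argument. Instead it works adversarially: a colouring is $1$-indistinguishable iff there is a bijection $f$ with $\chi_f\cong_1\chi$ but $f$ not a graph automorphism, so the paper bounds the probability that \emph{any} such $f\in\mathrm{Isom}^{(1)}(\chi)$ exists. Concretely, Lemma~\ref{1scott} shows that whp any $f\in\mathrm{Isom}^{(1)}(\chi)$ has $f^{-1}\in\mathrm{Cluster}^1_{\varepsilon n^2}$ (preimages of neighbourhoods have near-minimal neighbourhoods). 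The union bound there is over tuples (vertex, colouring of $B_2$, candidate preimage set, family of permutations), with failure probability of the order $\exp\{-\Omega(n^2)\}$; because the event being controlled is a statement about a global bijection rather than about an individual vertex's $2$-ball, this is a union bound over \emph{structured configurations}, not over the $2^n$ vertices. The structural stability results (Lemma~\ref{1-stab}, Corollary~\ref{1ftoj}, together with Lemma~\ref{1strongunique}) then upgrade ``clustered'' to ``approximately local'' with a unique, bijective dual, and Lemma~\ref{matching} closes the argument by showing that for an approximately local $f$, the existence of non-adjacent $u,v$ with $f(u)f(v)\in E(Q_n)$ would force an improbable colour-coincidence along non-adjacent spheres. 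At no point does the paper ever try to locate or seal off the non-rainbow $1$-balls; they are simply absorbed into the global probability estimates.

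The concrete gap in your proposal is precisely the tension you name and then leave unresolved. At $q=n^{2+\epsilon}$ a $\Theta(n^{-\epsilon})$ fraction of $1$-balls are not rainbow, and a $\Theta(n^{-C})$ fraction of vertices are non-clean, which is still $\Theta(2^n n^{-C})$ absolute vertices scattered through the cube. Your step (a) --- that ``reconstruction ambiguity is forced to be local'' --- is asserted but not argued, and it is the hard part: a growth process makes a choice of where to place each $1$-ball from the multiset, and two different growth histories can in principle be swapped by a non-automorphism that moves mass between far-apart contaminated regions. Controlling such global swaps requires a statement quantified over all bijections, which is exactly what Lemma~\ref{1scott} delivers and what a vertex-by-vertex growth argument does not. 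Your step (b) --- filling in each non-clean cluster from its enormous vertex boundary and the ``residual multiset of unused $1$-balls'' --- is not sketched at a level where one can see it working; the residual multiset does not come labelled by cluster, and with exponentially many clusters present simultaneously there is no obvious way to assign leftover $1$-balls uniquely without again reasoning about a full bijection. The isoperimetric fact that small sets have huge boundary is used in the paper, but in a different way (as a stability/rigidity statement, Theorem~\ref{f-1-stability}, Lemma~\ref{1-stab}), not as a device for patching an incrementally built colouring. In short, your plan identifies the right obstacles but does not yet contain the ideas that overcome them, whereas the paper's ``test every bijection at once'' framework sidesteps both obstacles by design.
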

 
It is easy to show that $\Omega(n)$ colours are required for reconstructability, and it would be interesting to narrow the gap (see Sections \ref{defres} and \ref{waffle} for further discussion).

The rest of the paper is organized as follows.  The remainder of this section contains definitions, as well as more formal statements of our results.
In Section \ref{probability} we prove some probabilistic tools we will use in our proofs. In Section \ref{structure} we prove an isoperimetric result as well as some other structural results regarding subgraphs of the hypercube. In Sections \ref{combination} and \ref{1ballsec} we prove our main theorems, and we conclude the paper in Section \ref{waffle} with some discussion and open questions. 
 
 \subsection{Definitions and results}\label{defres}
 
For all positive integers $n$, we define the \em $n$-dimensional hypercube \em $Q_n = (V,E)$ where $V = \{0,1\}^n$ and $uv \in E$ if the two vertices differ in exactly one co-ordinate. This graph can also be thought of as a graph on the power set of $[n]$, $\cP(n) = \{A \subseteq [n]\}$, where two sets $A,B$ are adjacent if they differ in exactly one element. For a vertex $u \in V$, we inductively let $\Gamma^0(u) = \{u\}$ and $\Gamma^{k}(u) = \bigcup_{v \in \Gamma^{k-1}(u)}\Gamma(v) \setminus \bigcup_{l <k}\Gamma^{l}(u)$ (so $\Gamma^k(v)$ is the set of vertices which have shortest path length exactly $k$ to $v$). We will call $\Gamma^k(v)$ the \em $k$-th neighbourhood \em of $v$. For a subset of the vertices $A \subset V$, we also write $\Gamma(A) = \bigcup_{v \in A} \Gamma(v)$. With the natural understanding of a distance function, we define the \em $r$-ball \em $B_r(v)$ around a vertex $v$ as the subgraph induced by the vertices at distance at most $r$ from $v$ (so for example $B_2(v)$ is induced by $\{v\}\cup \Gamma(v) \cup \Gamma^2(v)$).

We will need some notions of distances between colourings. Suppose $\chi$ and $\lambda$ are $\left\{0,1\right\}$-colourings of the same graph $G = (V,E)$, then we define
	\begin{align*}
		D(\chi,\lambda) = |\left\{w \in V : \chi(w) \neq \lambda(w)\right\}|.
	\end{align*}
For isomorphic graphs $G$ and $H$, and for a colouring $\chi$ of $G$ and $\lambda$ of $H$, we define
	\begin{align*}
		d(\chi,\lambda) = \min_{\text{iso } f:G \rightarrow H} D(\chi,\lambda \circ f),
	\end{align*}
where the minimum is taken over all graph isomorphisms.

We say that two colourings $\chi$ and $\lambda$ on $G$ are \em equivalent \em ($\chi \cong \lambda$) if and only if $d(\chi,\lambda) = 0$, and we define the equivalence class $[\chi]$ of a colouring $\chi$ accordingly ($\left[\chi\right] = \left\{\lambda : \chi \cong \lambda\right\}$). For a colouring $\chi$ and $r\ge 0$, let $\chi^{(r)}(v) := \chi|_{B_r(v)}$ be the coloured $r$--ball around $v$. We say that $\chi$ and $\lambda$ are \em $r$-locally equivalent \em ($\chi \cong_r \lambda$) if and only if there exists a bijection $f: V(G) \rightarrow V(G)$ such that $\chi^{(r)}(v) \cong  \lambda^{(r)}(f(v))$ for all $v \in V$. 

We say that a colouring $\chi$ is \em $r$-distinguishable \em if there is no colouring $\lambda$ such that $\chi \cong_r \lambda$ but $\chi \not\cong \lambda$, and we say $\chi$ is \em $r$-indistinguishable \em if it is not $r$-distinguishable. Thus $\chi$ is $r$-distinguishable if the collection of local colourings of $r$--balls determines the global colouring. Given $r$-locally equivalent colourings $\chi$ and $\lambda$ of the vertices of the hypercube, there exists a bijection $f$ such that $\chi^{(r)}(v) \cong  \lambda^{(r)}(f(v))$ for all $v \in V(Q_n)$. It is clear then that $\lambda = \chi \circ f^{-1}$, and that $\lambda \cong \chi$ if and only if $f$ can be chosen to be a graph isomorphism. In what follows, we define $\chi_f$ by $\chi_f(v) := \chi \circ f^{-1}(v)$. For a colouring $\chi$ of the hypercube $Q_n$ let $\mathrm{Isom}^{(r)}(\chi)$ be the set of bijections $f : V(Q_n) \to V(Q_n)$ such that $\chi^{(r)}(v) \cong (\chi_f)^{(r)}(f(v))$ for all $v \in V(Q_n)$. So $\chi$ is $r$-indistinguishable if and only if there exists a bijection $f \in \mathrm{Isom}^{(r)}(\chi)$ which is not a graph automorphism. In other words, if $\chi$ is $r$-indistinguishable then there exists a bijection $f \in \mathrm{Isom}^{(r)}(\chi)$ and two non-adjacent vertices $u,v \in V(Q_n)$ such that $f(u)f(v) \in E(Q_n)$.

We will concern ourselves with the problem of whether random colourings of the hypercube are distinguishable.

\begin{defn}
Let $\mu$ be a probability mass function on $\bN$. A random \em $\mu$-colouring \em of the hypercube $V(Q_n)$ is an independent collection of random variables $(\chi(v))_{v \in V(Q_n)}$ each with distribution  $\mu$. For a natural number $q$, we will write \em $q$-colouring \em instead of $\mathrm{Unif}([q])$-colouring.
\end{defn}

We show that for $r=2$ and $p$ not too small, with high probability, a random $(p,1-p)$-colouring of the hypercube is $2$-distinguishable.

\begin{thm}\label{main}
Let $\varepsilon > 0$ and let $p = p(n) \in (0,1/2]$ be a function on the natural numbers such that for sufficiently large $n$, $p \ge n^{-1/4 + \varepsilon}$. Let $\chi$ be a random $(p,1-p)$-colouring of the hypercube $Q_n$. Then with high probability, $\chi$ is $2$-distinguishable.
\end{thm}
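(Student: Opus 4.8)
The plan is to show that with high probability, any bijection $f \in \mathrm{Isom}^{(2)}(\chi)$ must be a graph automorphism of $Q_n$. Equivalently, we rule out the existence of $f \in \mathrm{Isom}^{(2)}(\chi)$ together with non-adjacent $u,v$ such that $f(u)f(v) \in E(Q_n)$. The key structural observation is that the coloured $2$-ball $\chi^{(2)}(v)$ should, with high probability, encode enough information to pin down the vertex $v$ almost exactly; in particular, if $f$ preserves all coloured $2$-balls then locally $f$ must look like an automorphism, and then a connectivity/propagation argument should force $f$ to be a genuine automorphism globally.

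First I would set up the ``local signature'' of a vertex. For a vertex $v$, consider the colour pattern that $\chi$ induces on $B_2(v)$ — in particular the colour of $v$, the multiset of colours on $\Gamma(v)$, and (crucially) the finer structure of which colour-$2$ vertices in $\Gamma(v) \cup \Gamma^2(v)$ are adjacent to which. Using the probabilistic tools promised in Section \ref{probability} (concentration/first-moment bounds), I would argue that for $p \ge n^{-1/4+\varepsilon}$, with high probability every vertex $v$ has a colour pattern on its second neighbourhood that is \emph{rigid}: the only colour-preserving isomorphisms from $B_2(v)$ to $B_2(w)$ (for any $w$) that can occur are the ``expected'' ones. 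The exponent $-1/4$ is presumably the threshold at which the number of colour-$2$ vertices in a $2$-ball, which is of order $p n^2$, is large enough (roughly $\gg n^{7/4}$, comfortably more than the $\sim n$ degrees of freedom one needs to break) that a union bound over the $2^n$ vertices and over the relevant local symmetries still succeeds; verifying this counting is where the real work lies.

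Next I would run the propagation argument. Suppose $f \in \mathrm{Isom}^{(2)}(\chi)$. By the rigidity of $2$-balls, for each $v$ the map $f$ restricted to $B_1(v)$ (say) is forced to agree with some automorphism of the local ball; in particular $f$ maps $\Gamma(v)$ to $\Gamma(f(v))$, so $f$ preserves adjacency locally, and hence — since $Q_n$ is connected and $f$ is a bijection preserving the edge relation in a neighbourhood of every vertex — $f$ is a graph isomorphism of $Q_n$, contradicting the assumption that some non-adjacent pair is mapped to an edge. Here one must be careful: a priori $f$ could permute the coordinate directions differently near different vertices, or reflect them; so the argument needs to check that the local pieces glue consistently, which again uses that the ambient graph is $Q_n$ (every edge lies in enough $4$-cycles to rigidify the coordinate structure), together with the rigidity from the previous step to kill the spurious local symmetries.

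The main obstacle I expect is the rigidity step: showing that with high probability \emph{no} vertex's coloured $2$-ball admits an ``unexpected'' colour-preserving partial isomorphism, uniformly over all $2^n$ vertices. This requires a careful first-moment computation over pairs of vertices and over the automorphism group of the $2$-ball of $Q_n$ (which has size roughly $2^n n!$-ish when one accounts for sign flips and coordinate permutations), balancing the entropy against the probability that a random colouring is invariant under a given nontrivial local symmetry — and it is precisely this balance that dictates the $p \ge n^{-1/4+\varepsilon}$ hypothesis. A secondary technical point is handling imbalanced $p$: when $p$ is small the colour-$2$ vertices are sparse in each $2$-ball, so one cannot rely on generic ``random colouring'' heuristics and must track the locations of the $O(pn^2)$ minority vertices explicitly, which is exactly what forces the lower bound on $p$ rather than allowing arbitrarily sparse colourings.
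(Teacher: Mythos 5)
Your high-level plan---show that every $f \in \mathrm{Isom}^{(2)}(\chi)$ is an automorphism---is the same as the paper's, but the central step of your argument does not follow, and it is precisely the step where the real difficulty lives. You assert that rigidity of the coloured $2$-ball forces $f$ to map $\Gamma(v)$ to $\Gamma(f(v))$, and hence $f$ preserves adjacency locally and is therefore an isomorphism. This is a conflation of two different conditions. The definition of $f \in \mathrm{Isom}^{(2)}(\chi)$ says only that the coloured $2$-ball around $v$ under $\chi$ is isomorphic (as a coloured graph) to the coloured $2$-ball around $f(v)$ under $\chi_f = \chi\circ f^{-1}$; it makes no direct claim about where the set map $f$ sends $\Gamma(v)$. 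A bijection $f$ could scatter $\Gamma(v)$ all over the cube and still satisfy the $\mathrm{Isom}^{(2)}$ condition at $v$ provided the colours happen to match up elsewhere. Deriving that $f^{-1}$ must (approximately!) send neighbourhoods near neighbourhoods is exactly what Lemma~\ref{scott} together with the Harper-stability results of Section~\ref{structure} establishes, and this approximate locality is never exact; the paper has to carry an error term of order $s(n)=p^{-1}\log n$ through the rest of the argument via the notions of approximately local, dual, and diagonal bijections, and then cope with it using Lemma~\ref{krigid} and the $6$-spread covering of Proposition~\ref{greedycovering}.

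Moreover, even if one could show that $f$ sends every neighbourhood exactly onto a neighbourhood, your conclusion that $f$ must be an automorphism is still unjustified, and in fact the paper points out a counterexample: the map on $Q_{2k}$ that fixes vertices of even weight and sends every odd-weight vertex to its antipode is $0$-approximately local but is not an automorphism (it maps adjacent pairs to antipodal pairs). So your final ``since $f$ preserves adjacency locally, $f$ is a graph isomorphism'' step is simply false as a structural claim about the cube; the paper instead has to rule out such maps by a second, probabilistic argument that couples the $2$-ball colourings around $u$ and $v$ via large rigid invariant substructures and shows the required colour coincidences have probability $\exp\{-\Omega(n^{4-\Delta(\cdot)})\}$, beating the $\exp\{O(n^3 s \log n)\}$ union bound. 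Finally, your heuristic for the exponent $-1/4$ (roughly ``$pn^2 \gg n^{7/4}$'') does not match where the threshold actually comes from: in the paper it arises from choosing $c$ so that $(3/4+\varepsilon)\bigl(\tfrac12 + \tfrac{c^2}{2(1-p)}\bigr) > 1$ while $\tfrac16 + \tfrac{c^2}{2(1-p)} < 1$, balancing the binomial tail probabilities in Lemma~\ref{bounds} against the entropy of the test configurations. So the proposal has a genuine gap at its core deduction, and omits both the stability machinery and the independence-by-spreading device that make the final union bound close.
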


We are also able to prove a similar result when we have more colours (see Section \ref{waffle} for a discussion of this). A direct corollary of this result is that random colourings of the hypercube are reconstructible with high probability from its $r$-balls for $r \ge 3$. In this case, however, it is not hard to prove a stronger result.

\begin{thm}\label{soft}
Let $\varepsilon > 0$ and let $p = p(n) \in (0,1/2]$ be a function on the natural numbers such that $\tfrac{np}{\log n} \rightarrow \infty$ as $n \rightarrow \infty$. Let $\chi$ be a random $(p,1-p)$-colouring of the hypercube $Q_n$. Then with high probability, $\chi$ is $3$-distinguishable.
\end{thm}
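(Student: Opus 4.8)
The plan is to show that with high probability, the only bijections $f \in \mathrm{Isom}^{(3)}(\chi)$ are graph automorphisms of $Q_n$; equivalently, if $f$ maps a non-edge to an edge, then with high probability some $3$-ball colouring fails to match. The key observation is that a $3$-ball around a vertex $v$ is large (it has $\Theta(n^3)$ vertices) and, for $p$ in the stated range, the colouring restricted to $\Gamma(v)$, $\Gamma^2(v)$, $\Gamma^3(v)$ records a great deal of information about how $v$ sits inside the cube. The strategy is to identify a ``rigidity'' statement: for a random colouring $\chi$, with high probability every vertex $v$ is the \emph{unique} vertex whose $3$-ball colouring equals $\chi^{(3)}(v)$ (up to the relevant isomorphisms), and moreover the adjacency structure is forced. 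Once each vertex is individually pinned down by its coloured $3$-ball, any $f$ realising a local equivalence must send each $v$ to the unique vertex with the matching $3$-ball, and then the fact that $f$ preserves coloured $2$-balls (a fortiori, since $f$ preserves $3$-balls) forces $f$ to preserve adjacency: if $u \sim v$ then $u \in B_3(v)$ records their adjacency, and the image must replicate this.

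First I would set up the counting. Fix a vertex $v$ and a candidate ``wrong'' image $w$ with, say, $d(v,w) \ge 2$. I want to bound the probability that $\chi^{(3)}(v) \cong \chi^{(3)}(w)$. Because $B_3(v)$ and $B_3(w)$ are disjoint whenever $d(v,w) \ge 7$, and nearly disjoint otherwise, I can expose the colours in $B_3(v)$ first and then ask for the probability that $B_3(w)$ admits a colour-preserving isomorphism to it. The number of isomorphisms $B_3(w) \to B_3(v)$ is at most $|\mathrm{Aut}(B_3)| \le \mathrm{poly}(n) \cdot $ (something like $n!$ in the worst naive bound, but in fact polynomially many once we account for the rigid structure of hypercube balls), while the probability that a \emph{fixed} isomorphism is colour-preserving is at most $\max(p,1-p)^{m}$ where $m = \Theta(n^3)$ is the number of vertices where the two balls are genuinely independent. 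Since $1-p \le 1 - n^{-1+o(1)}$ by the hypothesis $np/\log n \to \infty$, we get $(1-p)^{\Theta(n^3)} \le \exp(-\Theta(n^2/\log n))$, which comfortably beats the union bound over all pairs $(v,w)$ (there are at most $4^n$ of them) and over all isomorphisms. This handles the ``far'' case.

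The more delicate case is when $w$ is \emph{close} to $v$ (distance $2$ or $3$), so that $B_3(v)$ and $B_3(w)$ overlap heavily and the colours are not independent. Here the plan is to use the symmetric difference $B_3(v) \triangle B_3(w)$, which still has $\Theta(n^2)$ vertices (the ``outer shells'' differ), and to argue that a colour-preserving isomorphism would have to match the colouring on this linear-or-quadratic-sized independent-ish set; a more careful version exposes colours shell by shell using the structural/isoperimetric results of Section \ref{structure} to guarantee that enough genuinely free coordinates remain. I would also need to handle $f$ globally rather than pair by pair: having shown each $v$ has a $\mathrm{poly}(n)$-size (in fact constant-size, essentially just $v$ itself) set of possible images, a final short argument shows any bijection consistent with all these constraints is forced to be an automorphism. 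Alternatively — and this is probably the cleaner route — I would directly show that with high probability \emph{no} pair of non-adjacent $u,v$ can be mapped to an edge: since $f$ preserves coloured $3$-balls and hence (by the far/near analysis) $f$ is determined up to automorphism, a non-edge cannot map to an edge.

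The main obstacle I expect is the near case: controlling colour-preserving isomorphisms between two heavily overlapping $3$-balls, where one must extract enough independent randomness from the symmetric difference while simultaneously bounding the number of relevant isomorphisms. This is exactly where the structural lemmas about subgraphs of $Q_n$ from Section \ref{structure} should do the work — pinning down the automorphism structure of small balls and quantifying the isoperimetric-type lower bound on $|B_3(v) \triangle B_3(w)|$ and on the number of ``free'' coordinates available at each exposed shell. Everything else is a union bound whose slack comes from the $\exp(-\Omega(n^2/\log n))$ factor against only $\exp(O(n))$ many pairs.
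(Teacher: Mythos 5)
Your plan and the paper's proof take quite different routes, and your version has a gap at the crucial step. The paper's proof does not involve $3$-ball uniqueness or any far/near case analysis at all: it proves (following Lemma~\ref{strongunique}) that the coloured $2$-balls are pairwise distinct whenever $np/\log n \to \infty$, and then exploits the fact that the $3$-ball around a vertex $v$ contains the entire coloured $2$-ball around each neighbour $w \in \Gamma(v)$. Given the multiset of $3$-ball colourings, pick an anchor ball (say the one centred at $0$); read off from it the $2$-ball colourings of the $n$ neighbours of $0$; since these are unique, they act as fingerprints identifying which of the $2^n$ given $3$-balls belong to neighbours of $0$; then iterate outward, reconstructing $\chi$ on $B_k(0)$ for $k = 1, 2, \ldots$. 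This is a direct, layer-by-layer reconstruction from overlapping windows, and it requires no control on automorphisms of balls, no isoperimetry from Section~\ref{structure}, and no symmetric-difference estimates.

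The gap in your version is the transition from ``every vertex is pinned down by its coloured $3$-ball'' to ``$f$ preserves adjacency.'' Suppose all $3$-ball colourings of $\chi$ (hence of any $3$-locally equivalent $\lambda = \chi_f$) are distinct, so that $f$ is the unique bijection matching them up. For $u \sim v$ you then know $\chi^{(3)}(v) \cong \lambda^{(3)}(f(v))$ via some isomorphism $b_v\colon B_3(v) \to B_3(f(v))$, and $b_v(u) \in \Gamma(f(v))$; but nothing you have said forces $b_v(u) = f(u)$. To identify $u$'s image inside $B_3(f(v))$ you must be able to locate $u$ within $B_3(v)$ from the colouring alone, and the natural way to do this is to observe that $\chi^{(2)}(u)$ appears in $\chi^{(3)}(v)$ centred at $u$ and then use uniqueness of $2$-ball colourings --- which is precisely the ingredient the paper's proof starts from. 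Once you have had to invoke $2$-ball uniqueness, the whole $3$-ball rigidity machinery becomes superfluous and the paper's gluing argument is the natural completion. As a minor aside, a couple of your quantitative remarks need repair: the automorphism group of a hypercube $r$-ball has order $\Theta(n!)$, not polynomial in $n$ (the $n!$ rotations about the centre survive), and for $d(v,w) \in \{2,3\}$ the symmetric difference $B_3(v)\,\triangle\,B_3(w)$ has size $\Theta(n^3)$, not $\Theta(n^2)$, since the overlap is only $O(n^2)$. Neither of these would sink the far/near approach once the $2$-ball step is supplied, but they suggest the estimates have not yet been worked through carefully.
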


However, Theorem \ref{main} does not extend to $1$--balls. Indeed, if the hypercube is $q$-coloured where $q = o(n)$, then there are asymptotically fewer collections of colourings of the $2^n$ $1$--balls than there are $q$-colourings of the hypercube: let $q(n) = \tfrac{n}{w(n)}$ where $w(n) \rightarrow \infty$ as $n \rightarrow \infty$. Allowing for automorphisms, there are at least $\frac{q^{2^n}}{2^n n!} = 2^{2^n \log(q) (1+o(1))}$ possible colourings of the hypercube; on the other hand there are $q {n+q - 1 \choose q-1}$ ways of colouring a $1$--ball (up to isomorphism). But
\[
{n+q-1 \choose q-1} \le \left(\frac{3n}{q}\right)^q = \left(3w (n)\right)^{\frac{n}{w(n)}} = 2^{n \frac{\log 3w(n)}{w(n)}} = 2^{o(n)},
\]
and so $q {n+q - 1 \choose q-1} = o(2^n)$. Therefore the number of possible collections of colourings of the $1$--balls (assuming $q>2$) is at most
	\begin{align*}
		{2^n + q{n+q \choose q-1} -1 \choose 2^n} &\le {2^n (1+o(1)) \choose 2^n} \nonumber \\
		&\le 2^{2^n \left(1+o(1)\right)} \nonumber \\
		&= o\left(2^{2^n \log(q) \left(1+o(1)\right)}\right). \nonumber
	\end{align*}
Therefore at least $\Omega(n)$ colours are required. For the problem of reconstructing a colouring from the collection of $1$-balls, we prove the following upper bound.

\begin{thm}\label{1ball}
There exists some constant $K>0$ such that the following holds. Let $q \ge n^{2+K\log^{-\frac{1}{2}} n}$ and let $\chi$ be a random $q$-colouring of the hypercube $Q_n$. Then with high probability, $\chi$ is $1$-distinguishable.
\end{thm}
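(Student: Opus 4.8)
The plan is to show that a random $q$-colouring with $q$ this large is, with high probability, \emph{locally rigid}: the multiset of coloured $1$-balls pins down the colouring up to automorphism. The natural strategy is to argue that $\mathrm{Isom}^{(1)}(\chi)$ contains only graph automorphisms, which by the observation in the preliminaries amounts to ruling out any bijection $f$ sending some non-adjacent pair $u,v$ to an adjacent pair. First I would set up the key local invariant: for a vertex $v$, the coloured $1$-ball records $\chi(v)$ together with the multiset $\{\chi(w): w\in\Gamma(v)\}$. Because $q\ge n^{2+\epsilon}$ and $|\Gamma(v)|=n$, with high probability all $n$ neighbour colours of every vertex are distinct and differ from $\chi(v)$ (a union bound over the $2^n\cdot\binom{n}{2}$ potential coincidences, each of probability $1/q\le n^{-2-\epsilon}$, beats the $2^n$ factor since $\log_2$ of the failure probability is $n-(2+\epsilon)\log_2 n\to-\infty$... wait, that does not go to $-\infty$). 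So more carefully one wants the expected number of ``bad'' local configurations to be $o(1)$; the relevant bad events are not single colour-collisions but rather \emph{global} coincidences that would allow a nontrivial rematching, so the union bound must be organised around the structure of a would-be bijection $f$.

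The heart of the argument is a rigidity statement of the following shape. Suppose $f\in\mathrm{Isom}^{(1)}(\chi)$; I want to show $f$ is an automorphism. The idea is that adjacency is \emph{recoverable} from $1$-balls with high probability: two vertices $u,w$ are adjacent in $Q_n$ if and only if $\chi(u)$ appears in the neighbour-colour-multiset of $w$ \emph{and} $\chi(w)$ appears in that of $u$ \emph{and} (to kill spurious matches) their neighbour-colour-multisets overlap in the right way, namely $|\{$colours common to $\Gamma(u)$ and $\Gamma(w)\}| = 2$, coming from the two common neighbours of an edge in the hypercube. Since $f$ preserves coloured $1$-balls, it preserves every such colour-theoretic relation; so if with high probability the colour-theoretic relation coincides exactly with the hypercube adjacency relation, then $f$ preserves adjacency, hence is an automorphism. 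Thus the whole theorem reduces to: with high probability over $\chi$, for \emph{every} pair of non-adjacent $u,w$, the above colour-coincidence pattern fails. One then checks, by a union bound stratified by the distance $d(u,w)$ (and by how large a ``phantom common neighbourhood'' of colours one needs), that the number of colours $q\ge n^{2+K/\sqrt{\log n}}$ makes the total failure probability $o(1)$: the dominant term comes from pairs at distance $2$ (which genuinely share $2$ common neighbours, so one must instead rely on some finer discrepancy) and from the combinatorial explosion of choices of ``witness'' colours, which is exactly why the exponent must exceed $2$ by a $\log^{-1/2} n$ margin rather than a constant margin --- the $\binom{2^n}{\cdot}$-type counting of candidate local patterns contributes a $2^{O(n)}$ factor that must be absorbed, forcing $q^{-c}\cdot 2^{O(n)}=o(1)$, i.e. $\log q\gg n/(\text{number of independent colour constraints})$.

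Concretely, the steps I would carry out are: \textbf{(1)} Prove the ``clean local structure'' lemma: w.h.p.\ every vertex has $n$ pairwise distinct neighbour-colours, all distinct from its own colour, and moreover no two vertices at distance $\ge 3$ share any neighbour-colour, and any two vertices at distance $2$ share \emph{exactly} the two colours forced by their two common neighbours. Each of these is a first-moment computation; the binding one is the ``distance $\ge 3$'' statement, where there are $\le 2^n\cdot 2^n$ pairs and $\le n^2$ choices of the two witness neighbours, each coincidence having probability $1/q$, giving expectation $\le 2^{2n} n^2/q$, which is $o(1)$ precisely when $\log_2 q > 2n + 2\log_2 n$ --- and here is where a bit more care is needed, because $q=n^{2+o(1)}$ gives $\log_2 q=\Theta(\log n)$, nowhere near $2n$. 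So the naive pairwise union bound is \emph{far} too weak, and one instead argues per-bijection or uses that a violating $f$ must create \emph{many} simultaneous coincidences along a connected structure, so the exponents multiply favourably. \textbf{(2)} Using (1), show any $f\in\mathrm{Isom}^{(1)}(\chi)$ preserves adjacency, hence is an automorphism, hence $\chi$ is $1$-distinguishable. \textbf{(3)} Track the constants to see that $q\ge n^{2+K\log^{-1/2}n}$ is what the calculation in (1) actually tolerates.

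The main obstacle, and the place the real work lies, is Step (1) --- specifically, turning the hopelessly weak pairwise first-moment bound into something that works. The key realisation is that a bijection $f\in\mathrm{Isom}^{(1)}(\chi)$ that is not an automorphism cannot just locally fake one edge; preserving all $1$-balls globally forces a rigid cascade, so one should bound the probability that \emph{there exists} such an $f$ by summing over the combinatorial ``type'' of the discrepancy between $f$ and the automorphism group, and for each type the number of independent colour-equality constraints grows linearly in $n$, so each contributes $q^{-\Omega(n)}=2^{-\Omega(n\log n)}$, which crushes the $2^{O(n)}$ or $n^{O(n)}$ count of types. Making this cascade argument precise --- identifying the right notion of ``type'', showing it carries $\Omega(n)$ independent constraints, and checking the margin $\log^{-1/2}n$ is exactly what survives --- is the crux; everything else is bookkeeping with the definitions of $\cong_1$ and $\mathrm{Isom}^{(1)}$ already set up in the excerpt.
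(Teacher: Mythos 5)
Your high-level aim — show $\mathrm{Isom}^{(1)}(\chi)$ contains only automorphisms, via a union bound over ``types'' of bijection — is the right philosophy, and you correctly diagnose that a naive pairwise colour-coincidence union bound is hopeless: your Step (1) ``clean local structure'' lemma is, as you suspect, simply false (the expected number of distance-$\ge 3$ pairs sharing a neighbour colour is $\Theta(2^{2n}/q^{\Theta(1)})$, exponentially large). The proposal then retreats to ``bound per bijection, each type carries $\Omega(n)$ independent constraints, so $q^{-\Omega(n)}$ crushes $n^{O(n)}$ types,'' but this is not a proof: for a general bijection $f$ it is not true that preserving all $1$-balls yields $\Omega(n)$ \emph{independent} colour constraints tied to a single non-edge $uv$, because $f$ could scatter each $\Gamma(v)$ across the cube, and the constraints then share no useful locality. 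The missing idea is precisely how to control that scattering.

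The paper solves this in two stages that your proposal has no analogue of. First, a per-bijection union bound (Lemma~\ref{1scott}) establishes only that $f^{-1}\in\mathrm{Cluster}^1_{\varepsilon n^2}$, i.e.\ $|\Gamma(f^{-1}(\Gamma(v)))|$ is nearly minimal for every $v$ — a much weaker statement than ``adjacency is recoverable,'' and exactly the kind of statement whose probability can actually be beaten by a $q^{\binom{n}{2}}\cdot 2^{O(n^2)}\cdot n^{|\Gamma(A)|}$ count. Second — and this is the step you are genuinely missing — a \emph{deterministic} isoperimetric stability argument (Theorem~\ref{f-1-stability}, Lemma~\ref{1-stab}, Corollary~\ref{1ftoj}) upgrades ``clustered'' (plus uniqueness of $1$-ball colourings, Lemma~\ref{1strongunique}, used via the ``Mono'' refinement) to ``$s$-approximately local'': $f(\Gamma(v))$ sits almost entirely inside $\Gamma(g(v))$ for a dual map $g$. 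Only after this structural rigidity is in hand can one, as in Lemma~\ref{matching}, line up a set of $\Omega(n/\sqrt{\log n})$ colour constraints along a putative non-edge $uv$ with $f(u)f(v)\in E$, each of probability $\le n^2/q$, and beat the $2^{2n}\cdot\binom{n}{O(n/\sqrt{\log n})}$ union bound — which is also where the exact margin $K\log^{-1/2}n$ in the exponent of $q$ comes from. Your proposal names the goal and the difficulty, but it supplies neither the isoperimetric stability input nor the approximate-locality/dual machinery needed to extract the $\Omega(n)$ constraints, so as written it does not constitute a proof.
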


The proof of Theorem \ref{main} has some probabilistic elements but also uses some structural properties of the hypercube. We will need the following stability result for Harper's Theorem for sets of size $n$.

\begin{thm}\label{f-1-stability}
Let $s(n)$ be a function with $s(n) \rightarrow \infty$ and $s(n) = o(n)$ as $n \rightarrow \infty$. Then there exists a constant $C$ (which may depend on $s(n)$) such that the following holds: If $A \subseteq V(Q_n)$ with $|A| = n$ and $|\Gamma(A)| \le \binom{n}{2} + ns(n)$, there exists some $w \in V(Q_n)$ for which $|\Gamma(w) \cap A| \ge n - Cs(n)$.
\end{thm}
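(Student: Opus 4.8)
The plan is to read the hypothesis as a near‑equality in Harper's vertex‑isoperimetric theorem and then prove the corresponding stability statement directly. Since $|A\cup\Gamma(A)|\le |A|+|\Gamma(A)|\le n+\binom n2+ns(n)$, while Harper's theorem gives $|A\cup\Gamma(A)|\ge\binom n2+n+1$ for every $A$ with $|A|=n$ (equivalently $|\Gamma(A)|\ge\binom n2+1$), the set $A$ is within $ns(n)$ of optimal. One family of extremal sets is $\{\Gamma(w):w\in V(Q_n)\}$, for which $|\Gamma(\Gamma(w))|=\binom n2+1$ exactly; writing $d^*:=\max_w|\Gamma(w)\cap A|$ and $r:=n-d^*$, the conclusion to be proved is precisely $r\le Cs(n)$.

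So fix $w^*$ attaining $d^*$. Applying an automorphism of $Q_n$ we may take $w^*=\varnothing$, and permuting coordinates we may take $B:=\Gamma(\varnothing)\cap A=\{\{1\},\dots,\{d^*\}\}$; set $D:=A\setminus B$, so $|D|=r$. Centring at a vertex of maximum codegree serves two purposes. First, $\Gamma(B)$ is cheap and lies in only two Hamming levels: $\Gamma(B)=\{\varnothing\}\cup\{\text{pairs meeting }[d^*]\}$, so $\Gamma(B)$ is contained in levels $0$ and $2$ and $|\Gamma(B)|=\binom n2+1-\binom r2$. Hence the part of $\Gamma(D)$ outside levels $0,2$ is disjoint from $\Gamma(B)$, and
\[
|\Gamma(A)|\ \ge\ |\Gamma(B)|+\bigl|\Gamma(D)\setminus(\text{levels }0,2)\bigr|\ =\ \binom n2+1-\binom r2+\bigl|\Gamma(D)\setminus(\text{levels }0,2)\bigr|.
\]
Second, no vertex is adjacent to more than $d^*$ vertices of $D$, which will prevent $D$ from being too concentrated.

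To bound the last term, split $D$ into level slices $D_\ell$ with $m_\ell:=|D_\ell|$ (so $\sum_\ell m_\ell=r$, $m_1=0$). Each $D_\ell$ with $2\le\ell\le n$ pushes mass to levels $\ge3$ via its upper shadow $\partial^+D_\ell$ (at level $\ell+1$) and, when $\ell\ge4$, also via its lower shadow $\partial^-D_\ell$ (at level $\ell-1$); by Kruskal--Katona, $|\partial^+D_\ell|\ge(n-\ell)m_\ell-\binom{m_\ell}2$ and $|\partial^-D_\ell|\ge\ell\,m_\ell-\binom{m_\ell}2$ in the relevant ranges, with coarser but still $\Omega(nm_\ell)$ bounds otherwise (always $m_\ell\le r\le n$). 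For $\ell\ge4$ the two shadows sit at distinct levels, so their sizes add; summing over $\ell$, using $\max(x,y)\ge\tfrac12(x+y)$ for the $O(1)$ levels near $n/2$ that two slices claim, and $\sum_\ell\binom{m_\ell}2\le\binom r2$, yields $\bigl|\Gamma(D)\setminus(\text{levels }0,2)\bigr|\ge c\,nr-O\!\left(\binom r2\right)$ for an absolute $c>0$. Feeding this back, $|\Gamma(A)|\le\binom n2+ns(n)$ forces $c\,nr\le ns(n)+O\!\left(\binom r2\right)$; when $r=o(n)$ this gives $r=O(s(n))$, hence $|\Gamma(w^*)\cap A|=n-r\ge n-Cs(n)$, as required.

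The crux is the regime $r=\Omega(n)$ — a near‑Harper‑extremal set of size $n$ that is very far from every $\Gamma(w)$ — where the inequality above is too lossy, since the $-\binom r2$ defect from $|\Gamma(B)|$ can absorb the $\Omega(nr)$ gain. The remedy is to also use the codegree bound and the behaviour of $\Gamma(D)$ on levels $0,1,2$. If $D$ has many vertices in levels $2$ or $3$, Kruskal--Katona makes their upper shadows $\Omega(n^2)$, and these lie in levels $\ge3$ and so are counted freely; moreover $D_\ell$ cannot be a single tight sunflower, because such a sunflower with $m_\ell$ petals has an $(\ell-1)$‑core adjacent to all of them, forcing $m_\ell\le d^*$, so when $r$ is large the shadows are genuinely larger than the bare Kruskal--Katona bound. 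If instead $D$ sits mostly on high levels, the pairs it contributes at level $2$ refill almost all of the $\binom r2$ missing pairs inside $\{d^*+1,\dots,n\}$. In each case one obtains $|\Gamma(A)|\ge\binom n2+\varepsilon n^2$ for some constant $\varepsilon>0$, contradicting $|\Gamma(A)|\le\binom n2+ns(n)=\binom n2+o(n^2)$. Making these cases fit together and keeping the Kruskal--Katona constants in line is the one genuinely delicate point; everything else is bookkeeping.
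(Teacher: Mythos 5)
Your approach — fix $w^*$ attaining the maximum codegree $d^*$, centre at $w^*$, split $A=B\sqcup D$ with $B=\Gamma(w^*)\cap A$, and count $\Gamma(A)$ level-by-level against the "missing" pairs of $\Gamma(B)$ — is genuinely different from the paper's. The paper instead trims $A$ by removing two small exceptional sets $Y_1$ (vertices with many private neighbours) and $Y_2$ (vertices with no codegree partner), shows the remainder clusters around a bounded number of hubs $w_1,\dots,w_k$, and applies Harper's theorem plus a careful accounting of cross-shadow terms to show one hub dominates. Your method is cleaner in spirit and avoids the cluster-counting, and for $r:=n-d^{*}=o(n)$ it closes: indeed, one can even bypass Kruskal--Katona there and simply apply Harper to $B$ and $D$ separately, noting $\Gamma(B)$ lives in levels $0,2$ while $\Gamma(D)\cap(\text{levels }0,2)\subseteq\partial^-D_3\cup\{\emptyset\}$ has size $O(n)$, which gives $|\Gamma(A)|\ge\binom n2 + r(n-r) - O(n)$, forcing $r(n-r)\le ns(n)+O(n)$. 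That inequality in fact rules out all $r$ with $2s(n)\lesssim r\lesssim n-2s(n)$, which is a larger range than your $r=o(n)$ argument covers and requires no shadow bookkeeping.

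The genuine gap is the top end, $d^*=o(n)$ (equivalently $r\ge n-O(s(n))$), and your sketch for the $r=\Omega(n)$ regime does not actually address it. Two specific problems. First, the sunflower observation ("$D_\ell$ cannot be a single tight sunflower, because its $(\ell-1)$-core is adjacent to all $m_\ell$ petals, forcing $m_\ell\le d^{*}$") only constrains a single sunflower inside a single slice; it does not prevent $D_\ell$ from being a union of many small, overlapping clusters, which is exactly what makes shadows collapse, and when $d^{*}$ is small the inequality $m_\ell\le d^{*}$ provides no leverage on the total $\sum_\ell m_\ell=r\approx n$. Second, the remedy "if $D$ sits mostly on high levels, the pairs it contributes at level $2$ refill the $\binom r2$ missing pairs" is false as stated: a vertex at level $\ell$ contributes to $\Gamma(D)$ only at levels $\ell\pm1$, so level $2$ of $\Gamma(D)$ comes solely from $D_1\cup D_3$ (and $D_1=\emptyset$ by construction). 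If $D$ is concentrated on high levels it contributes essentially nothing at level $2$, so the "refill" does not happen. Without this case the proof is incomplete: the conclusion $r=O(s(n))$ is precisely what is being proved, so one cannot simply restrict to $r=o(n)$, and the range $d^{*}<Cs(n)$ is where the paper's argument does real work (the bound $|Y_1|\le (2/\varepsilon)s(n)$ and the codegree analysis of $Y_2$). You would need an actual argument — e.g.\ a double-counting refinement exploiting that $\Gamma(A)$ is constrained to be small but $d^{*}$ is tiny — that you currently only gesture at.
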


Two of the authors have generalised this result to sets of size $\binom{n}{k}$ for a range of $k$ using different techniques \cite[Theorem 1.2]{VISH-MPAR}. Since the proof for $k=1$ is much simpler, we present it here. We remark that Keevash and Long \cite{PKEL} have independently proven a similar result.

Before continuing, let us give a very brief sketch of our approach to Theorems \ref{main} and \ref{1ball}. A colouring of the hypercube $\chi$ is $2$-indistinguishable if there is another colouring $\lambda$ which is not a rotation of $\chi$ but has the same collection of $2$--ball colourings. Recall that we may express $\lambda$ as $\lambda = \chi_f$ where $f$ is a bijection on the hypercube which is not an automorphism. Recall that we write $\mathrm{Isom}^{(2)}(\chi)$ for the collection of bijections $f$ for which $\chi$ and $\chi_f$ have the same collection of $2$--ball colourings. We prove Theorem \ref{main} by showing that with high probability every bijection in $\mathrm{Isom}^{(2)}(\chi)$ is an automorphism, and so no such $\lambda$ can exist.

To do this, we first consider what sort of properties a function $f \in \mathrm{Isom}^{(2)}(\chi)$ would almost surely need to display. In Section \ref{probability} we look at the neighbourhood $\Gamma(v)$ of a vertex $v$, and consider how spread out its image $f^{-1}(\Gamma(v))$ is in the hypercube. We show that with high probability, for every vertex $v$, the second neighbourhood $\Gamma^2(f^{-1}(\Gamma(v)))$ is not very large. From here we prove in Section \ref{structure} that $f^{-1}(\Gamma(v))$ must closely resemble a neighbourhood of a vertex $g(v)$ for each vertex $v$. It follows that with high probability, for each bijection $f \in \mathrm{Isom}^{(2)}(\chi),$ the inverse $f^{-1}$ roughly maps neighbourhoods to neighbourhoods.

This rough mapping of neighbourhoods forces a certain amount of rigidity of $f^{-1}$; around each vertex, there must be a large structure which is invariant under $f^{-1}$. If an $f \in \mathrm{Isom}^{(2)}(\chi)$ exists which is not an automorphism, then there must be two non adjacent vertices $u$ and $v$ with $f^{-1}(u)$ and $f^{-1}(v)$ adjacent. But $u$ and $v$ each have a large structure around them invariant under $f^{-1}$. The colourings of these two large structures must then fit together. We show that the probability of this occurring is small. We may conclude that $\mathrm{Isom}^{(2)}(\chi)$ contains only automorphisms with high probability.

The proof of Theorem \ref{1ball} is similar. This time, we show that with high probability, for every vertex $v$, the neighbourhood $\Gamma(f^{-1}(\Gamma(v)))$ is not very large. Since $q$ is so large, with high probability, the colourings of $1$--balls have very little overlap, and so it cannot be that $f(\Gamma(v))$ has large clusters around more than one vertex. We combine these to show that $f(\Gamma(v))$ has a large cluster around some vertex $g(v)$ for each vertex $v$. The remainder of the proof mimics that of Theorem \ref{main}.

\subsection{Notation}

We record here for reference some notation that will be used later in the proofs. The reader may choose to skip some of these for now, as they will all be introduced in the sections to come.

\begin{itemize}
\item For $i \in [n]$, we define $e_i \in \{0,1\}^n$ as the vector whose $i$-th entry is $1$ and whose other entries are $0$.
\item Given a colouring $\chi$, we write $\chi^{(r)}(v)$ for the restriction of $\chi$ to the $r$--ball around $v$.
\item $\mathrm{Bij}$ is the set of bijections $f : V(Q_n) \rightarrow V(Q_n)$.
\item Given a colouring $\chi$ and a bijection $f \in \mathrm{Bij}$, we define $\chi_f$ by $\chi_f(v):= \chi\left(f^{-1}(v)\right)$.
\item Given a colouring $\chi$, we define $\mathrm{Isom}^{(r)}(\chi) := \left\{f \in \mathrm{Bij} :\chi^{(r)}(v) \cong \chi_f^{(r)}(f(v)), \forall v \in V(Q_n)\right\}.$
\item $\mathrm{Cluster}^r_R = \left\{f \in \mathrm{Bij} : \forall v \in V(Q_n), |\Gamma^r(f(\Gamma(v)))| \le \binom{n}{r+1} + R\right\}$ (see Definition \ref{clusterdef}).
\item $\mathrm{Mono}_s^t$ is the set of bijections $f \in \mathrm{Cluster}^1_s$ for which, for all $v \in V(Q_n)$, there exists at most one vertex $w \in V(Q_n)$ such that $|f(\Gamma(v))\cap \Gamma(w)| > t$ (see Definition \ref{monodef}).
\item $\mathrm{Local}_s$ is the set of $s$-approximately local bijections (see Definition \ref{approxdefn}).
\item $\mathrm{Diag}_s:= \left\{f \in \mathrm{Local}_s : f_{\star \star} = f\right\}$ is the set of diagonal $s$-approximately local bijections (see Definition \ref{diagdefn}).
\item $\mathrm{Self}_s:= \left\{f \in \mathrm{Local}_s : f_{\star} = f\right\}$ is the set of $s$-approximately local bijections for which the dual of $f$ is itself (see Definition \ref{selfdual}).
\end{itemize}

\section{Probabilistic arguments}\label{probability}
In this section we show that we need only consider bijections $f$ such that $f^{-1}$ ``behaves well" on neighbourhoods: for every vertex $v$, the second neighbourhood of $\left\{f^{-1}(w) : w \in \Gamma(v)\right\}$ is not too large. Before we do this, we show that under the assumptions of Theorems \ref{main} and \ref{1ball}, the colourings of $2$--balls and $1$--balls respectively differ greatly from one another. To do this, we will need the following bounds on the tail of the Binomial distribution (see \cite{Chernoffcite} for the proof of Lemma \ref{chernn}).

\begin{lem}[Chernoff's Inequality]\label{chernn}
Let $n \in \bN, p \in (0,1)$ and $\varepsilon>0$. Then
	\begin{align*}
		\bP\left[\mathrm{Bin}(n,p) \le np(1-\varepsilon)\right] \le \exp\left\{-\frac{\varepsilon^2 np}{2}\right\}.
	\end{align*}
\end{lem}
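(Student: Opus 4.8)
The plan is to use the standard exponential-moment (Chernoff) method; this is a classical estimate, so the argument is included only for completeness. Write $X = \mathrm{Bin}(n,p) = \sum_{i=1}^n X_i$ as a sum of independent $\mathrm{Bernoulli}(p)$ random variables. First I would dispose of the trivial range $\varepsilon \ge 1$: there $np(1-\varepsilon) \le 0$, so the event either has probability $0$ (if $\varepsilon > 1$) or equals $\{X = 0\}$, of probability $(1-p)^n \le e^{-np} \le e^{-\varepsilon^2 np/2}$ (if $\varepsilon = 1$, using $\varepsilon^2/2 \le 1$). So we may assume $\varepsilon \in (0,1)$.

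For any $t > 0$, Markov's inequality applied to the nonnegative random variable $e^{-tX}$ gives
\[
\bP\bigl[X \le np(1-\varepsilon)\bigr] = \bP\bigl[e^{-tX} \ge e^{-tnp(1-\varepsilon)}\bigr] \le e^{tnp(1-\varepsilon)}\,\bE\bigl[e^{-tX}\bigr] = e^{tnp(1-\varepsilon)}\bigl(\bE\bigl[e^{-tX_1}\bigr]\bigr)^n,
\]
using independence of the $X_i$ (note this works for an arbitrary real threshold, so non-integrality of $np(1-\varepsilon)$ causes no difficulty). Since $\bE[e^{-tX_1}] = 1 - p + pe^{-t} = 1 + p(e^{-t}-1) \le \exp\{p(e^{-t}-1)\}$ by the inequality $1 + x \le e^x$, we obtain
\[
\bP\bigl[X \le np(1-\varepsilon)\bigr] \le \exp\bigl\{np\bigl(t(1-\varepsilon) + e^{-t} - 1\bigr)\bigr\}.
\]

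Next I would optimise over $t$: the exponent $t(1-\varepsilon) + e^{-t} - 1$ is minimised at $e^{-t} = 1-\varepsilon$, i.e.\ $t = \log\tfrac{1}{1-\varepsilon}$, which is strictly positive precisely because $\varepsilon \in (0,1)$, so it is an admissible choice. Substituting yields the bound $\exp\{-np\,\phi(\varepsilon)\}$ with $\phi(\varepsilon) := (1-\varepsilon)\log\tfrac{1}{1-\varepsilon} - \varepsilon = -(1-\varepsilon)\log(1-\varepsilon) - \varepsilon$. It then remains to verify the elementary inequality $\phi(\varepsilon) \ge \varepsilon^2/2$ on $(0,1)$, equivalently $g(\varepsilon) := (1-\varepsilon)\log(1-\varepsilon) + \varepsilon - \varepsilon^2/2 \ge 0$. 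Here $g(0) = 0$ and $g'(\varepsilon) = -\log(1-\varepsilon) - \varepsilon \ge 0$, the last step because $-\log(1-\varepsilon) = \sum_{k \ge 1} \varepsilon^k/k \ge \varepsilon$; hence $g \ge 0$ on $[0,1)$, which gives the stated bound.

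There is no genuine obstacle here: the argument is textbook. The only two points needing a little care are that the optimal parameter $t = \log\tfrac{1}{1-\varepsilon}$ lies in the admissible range $t > 0$, so that Markov's inequality is applied in the correct direction, and the final scalar inequality $\phi(\varepsilon) \ge \varepsilon^2/2$, which is the one-line monotonicity computation above.
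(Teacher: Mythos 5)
Your argument is the standard exponential-moment proof of the multiplicative Chernoff lower-tail bound, and it is essentially correct; note, however, that the paper does not prove this lemma at all -- it simply cites Mitzenmacher and Upfal for it -- so any written proof is "different" from the paper, though yours is precisely the textbook argument the citation points to. The structure is sound: the reduction to $\varepsilon\in(0,1)$, the Markov/MGF step with $\bE[e^{-tX_1}]\le\exp\{p(e^{-t}-1)\}$, the optimisation at $t=\log\tfrac{1}{1-\varepsilon}$, and the final scalar inequality are all the right moves. One slip to fix: substituting $e^{-t}=1-\varepsilon$ gives exponent $np\bigl[(1-\varepsilon)\log\tfrac{1}{1-\varepsilon}-\varepsilon\bigr]$, so the bound is $\exp\{-np\,\phi(\varepsilon)\}$ only if you set $\phi(\varepsilon):=\varepsilon+(1-\varepsilon)\log(1-\varepsilon)$; your displayed $\phi$ has the opposite sign (it is negative, e.g.\ $\phi(1/2)\approx-0.15$, so "$\phi\ge\varepsilon^2/2$" as literally written is false, and your claimed equivalence with $g\ge 0$ silently flips the sign back). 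Since the inequality you actually verify, $g(\varepsilon)=(1-\varepsilon)\log(1-\varepsilon)+\varepsilon-\varepsilon^2/2\ge 0$ via $g(0)=0$ and $g'(\varepsilon)=-\log(1-\varepsilon)-\varepsilon\ge 0$, is exactly the correct one, this is a notational typo rather than a gap; correcting the definition of $\phi$ makes the whole chain consistent.
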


\begin{lem}\label{bounds}
Fix $K > 0$ and let $p =p(n) \in (0,1/2]$ be such that $np \rightarrow \infty$. Then for $0\le c \le K$ such that $n/2 + c\sqrt{n\log n }$ is an integer we have
	\begin{align}
		\bP\left[\mathrm{Bin}(n,p) = np + c\sqrt{np\log np }\right] = \Theta\left((np)^{-\left(\frac{1}{2}+\frac{c^2}{2(1-p)}\right)}\right) \label{bin1}
	\end{align}
uniformly over $c$. Furthermore
	\begin{align}
		\bP\left[\mathrm{Bin}(n,p) \ge np + c\sqrt{np\log np }\right] = \Omega\left((np)^{\frac{1}{3}}\bP\left[\mathrm{Bin}(n,p) = np + c\sqrt{np\log np }\right]\right). \label{bin2}
	\end{align}
\end{lem}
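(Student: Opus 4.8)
The plan is to prove \eqref{bin1} by a direct local central limit estimate via Stirling's formula, and to deduce \eqref{bin2} by summing a polynomial number of consecutive point probabilities just above the value $m$. Throughout write $q := 1-p$ and $m := np + c\sqrt{np\log np}$, which is a non-negative integer by hypothesis, and set $\delta := c\sqrt{np\log np}$. Since $p \le 1/2$ we have $q = \Theta(1)$, and since $np \to \infty$ we have $\delta/np \to 0$ and $\delta/(nq) \le 2\delta/n \to 0$, all uniformly over $c \in [0,K]$; in particular $m = np(1+o(1))$ and $n-m = nq(1+o(1))$.

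For \eqref{bin1}, I would apply Stirling's formula to each factorial in $\bP[\mathrm{Bin}(n,p)=m] = \binom{n}{m}p^m q^{n-m}$ to obtain
\[
\bP[\mathrm{Bin}(n,p)=m] = \sqrt{\frac{n}{2\pi m(n-m)}}\,\left(\frac{np}{m}\right)^{m}\left(\frac{nq}{n-m}\right)^{n-m}\bigl(1+o(1)\bigr),
\]
where the error is $1 + O\bigl((np)^{-1}\bigr)$ since $m, n-m, n \ge np \to \infty$. The prefactor is $\Theta\bigl((npq)^{-1/2}\bigr) = \Theta\bigl((np)^{-1/2}\bigr)$. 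Taking the logarithm of the remaining factor and writing it as $-np\bigl(1+\tfrac{\delta}{np}\bigr)\log\bigl(1+\tfrac{\delta}{np}\bigr) - nq\bigl(1-\tfrac{\delta}{nq}\bigr)\log\bigl(1-\tfrac{\delta}{nq}\bigr)$, I would expand both logarithms (legitimate as $\delta/np,\, \delta/(nq)\to 0$): the terms linear in $\delta$ cancel, the quadratic terms sum to $-\tfrac{\delta^2}{2np}-\tfrac{\delta^2}{2nq} = -\tfrac{\delta^2}{2npq} = -\tfrac{c^2\log np}{2q} = -\tfrac{c^2}{2(1-p)}\log np$, and the contribution of all higher-order terms is $O\bigl(\delta^3/(np)^2\bigr) = O\bigl((np)^{-1/2}(\log np)^{3/2}\bigr) = o(1)$, uniformly over $c \le K$. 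Exponentiating produces a factor $(np)^{-c^2/(2(1-p))}(1+o(1))$, and multiplying by the prefactor gives \eqref{bin1}.

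For \eqref{bin2}, put $L := \CL{(np)^{1/3}}$ and bound $\bP[\mathrm{Bin}(n,p)\ge m] \ge \sum_{j=m}^{m+L}\bP[\mathrm{Bin}(n,p)=j]$ (this makes sense as $m+L = np + o(n) < n$). The consecutive ratio is $\frac{\bP[\mathrm{Bin}(n,p)=j+1]}{\bP[\mathrm{Bin}(n,p)=j]} = \frac{(n-j)p}{(j+1)q}$, and for each $j$ in $[m, m+L]$, writing $s := j - np \in [0, O(\sqrt{np\log np})]$, this ratio equals $\frac{1-s/(nq)}{1+(s+1)/(np)} = 1 - O\bigl(\sqrt{\log np/np}\bigr)$. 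Hence for every such $j$,
\[
\frac{\bP[\mathrm{Bin}(n,p)=j]}{\bP[\mathrm{Bin}(n,p)=m]} \ge \Bigl(1 - O\bigl(\sqrt{\tfrac{\log np}{np}}\bigr)\Bigr)^{L} \ge \tfrac12
\]
for $n$ large, because $L\sqrt{\log np/np} = O\bigl((np)^{-1/6}\sqrt{\log np}\bigr)\to 0$. Summing over the $L+1$ values of $j$ gives $\bP[\mathrm{Bin}(n,p)\ge m] \ge \tfrac{L}{2}\,\bP[\mathrm{Bin}(n,p)=m] = \Omega\bigl((np)^{1/3}\,\bP[\mathrm{Bin}(n,p)=m]\bigr)$, uniformly in $c$.

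I expect no real obstacle here: this is a routine local limit computation. The only points needing care are keeping all the Stirling and Taylor error terms $o(1)$ uniformly over $c \in [0,K]$ under the weak hypothesis $np\to\infty$ (equivalently, that $\delta/np$, $\delta/(nq)$ and $L/np$ all tend to $0$), and observing that \eqref{bin2} asks only for the factor $(np)^{1/3}$, whereas the truth is of order $\sqrt{np/\log np}$ — this slack is exactly what lets us avoid any Mills-ratio-type upper bound and simply add up $(np)^{1/3}$ nearby point probabilities.
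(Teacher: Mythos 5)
Your argument for \eqref{bin1} is essentially the same as the paper's: both go through the Stirling expansion of $\binom{n}{m}p^m(1-p)^{n-m}$, observe the $\Theta\bigl((np)^{-1/2}\bigr)$ prefactor, and then Taylor-expand $\log(1+x)$ and $\log(1-x)$ to extract the factor $(np)^{-c^2/(2(1-p))}$ with a uniformly $o(1)$ cubic error. For \eqref{bin2} your argument is slightly different in flavour but equivalent in substance. The paper uses the monotone decrease of the binomial mass function past the mode to lower-bound each of the terms $\bP[\mathrm{Bin}(n,p)=m+t]$, $0\le t\le(np)^{1/3}$, by $\bP[\mathrm{Bin}(n,p)=m+(np)^{1/3}]$, and then re-applies \eqref{bin1} at the shifted centre $m+(np)^{1/3}$ (which only alters $c$ by $o\bigl((np)^{-1/6}\bigr)$) to conclude that this lower bound is $\Theta\bigl(\bP[\mathrm{Bin}(n,p)=m]\bigr)$. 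You instead bound the consecutive ratio $\bP[j+1]/\bP[j]=\frac{(n-j)p}{(j+1)(1-p)}=1-O\bigl(\sqrt{\log np/np}\bigr)$ directly and telescope over $L=\lceil(np)^{1/3}\rceil$ steps; since $L\sqrt{\log np/np}\to 0$ uniformly, the product stays bounded below. Both routes are correct, short and uniform in $c\in[0,K]$; yours is marginally more self-contained (it doesn't re-invoke \eqref{bin1}), while the paper's is marginally terser. No gaps.
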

\begin{proof}
Let $K>0$ and suppose $0\le c \le K$. Let $r =c\sqrt{np\log np }$. We first prove \eqref{bin1}. We have
	\begin{align*}
		\bP\left[\mathrm{Bin}(n,p) = np + c\sqrt{np\log np }\right] &= \binom{n}{np+r} p^{np+r} (1-p)^{n(1-p)-r} \\
		&= \frac{n!p^{np+r}(1-p)^{n(1-p)-r}}{(np+r)!(n(1-p)-r)!} \\
		&= \Theta \left ( \frac{\sqrt{n}(n/e)^n p^{np+r} (1-p)^{n(1-p)-r}}{\sqrt{np} \left (\frac{np+r}{e} \right )^{np+r}\sqrt{n(1-p)} \left (\frac{n(1-p)-r}{e} \right )^{n(1-p)-r}} \right ) \\
      		&= \Theta \left ( \frac{1}{\sqrt{np}} \left ( \frac{p}{p+r/n} \right )^{np+r} \left (\frac{1-p}{1-p -r/n} \right )^{n(1-p)-r} \right ) \\
		&= \Theta \left ( \frac{1}{\sqrt{np}} \left ( 1+\frac{r}{np} \right )^{-np-r}\left (1-\frac{r}{n(1-p)} \right )^{-n(1-p)+r} \right).
	\end{align*}
By Taylor expansion of $\log(1+x)$,
\[
\left( 1+\frac{r}{np} \right )^{-np-r} = \exp\left\{-r - \frac{r^2}{2np} + O\left(\frac{r^3}{(np)^2}\right)\right\}.
\]
Analogously,
\[
\left(1-\frac{r}{n(1-p)} \right )^{-n(1-p)+r} = \exp\left\{r-\frac{r^2}{2n(1-p)} + O\left(\frac{r^3}{(n(1-p))^2}\right).\right\}
\]
Therefore
	\begin{align*}
      		&\bP\left[\mathrm{Bin}(n,p) = np + c\sqrt{np\log np }\right] \\
		&= \Theta \left ( \frac{1}{\sqrt{np}}\exp\left\{-r^2\left(\frac{1}{2np} + \frac{1}{2n(1-p)}\right) + O\left(\frac{r^3}{(np)^2}\right)\right\} \right ) \\
      		&= \Theta \left( \frac{1}{\sqrt{np}}\exp\left\{-c^2\log np \left(\frac{1}{2} + \frac{np}{2n(1-p)}\right) + O\left(K^3(np)^{-1/2}\log^{3/2} np \right)\right\} \right ) \\
      		&= \Theta\left((np)^{-\left(\frac{1}{2}+\frac{c^2}{2(1-p)})\right)}\right).
	\end{align*}
 Now \eqref{bin2} follows immediately by observing that for $0 \le t \le n^\frac{1}{3}$,
	\begin{align}
		 \bP\left[\mathrm{Bin}(n,p) = np + c\sqrt{np\log np } + t\right] &\ge  \bP\left[\mathrm{Bin}(n,p) = np + c\sqrt{np\log np } + (np)^{\frac{1}{3}}\right] \nonumber \\
		 &= \Theta\left(\bP\left[\mathrm{Bin}(n,p) = np + c\sqrt{np\log np }\right]\right). \nonumber
	\end{align}
\end{proof}

The next two lemmas show that with high probability the pairwise distances between colourings of the $2$-balls around vertices are large.

\begin{lem}\label{strongunique}
Let $p=p(n) \in (0,1/2]$ be such that $\frac{pn}{\log n} \rightarrow \infty$. Let $\chi$ be a random $(p,1-p)$-colouring of the hypercube $Q_n$. Then with high probability, there do not exist distinct vertices $u,v \in V(Q_n)$ such that $d(\chi^{(2)}(u),\chi^{(2)}(v)) \le \frac{n^2p(1-p)}{2}$.
\end{lem}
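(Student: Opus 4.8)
The plan is to fix two distinct vertices $u,v\in V(Q_n)$ together with a graph isomorphism $f\colon B_2(u)\to B_2(v)$, bound the probability that $D\bigl(\chi^{(2)}(u),\chi^{(2)}(v)\circ f\bigr)\le\tfrac{n^2p(1-p)}{2}$, and then take a union bound over all pairs and all isomorphisms. The only point that needs care is that $B_2(u)$ and $B_2(v)$ may overlap, so the disagreement count is not literally a sum of independent indicators; the key observation is that the overlap is negligible next to the ball size $\binom n2+n+1$. A short case analysis on $d(u,v)$ — the intersection is empty unless $d(u,v)\le 4$, it has size $2n$ when $d(u,v)\in\{1,2\}$, and size $6$ when $d(u,v)\in\{3,4\}$ — shows $|B_2(u)\cap B_2(v)|\le 2n$ for all distinct $u,v$. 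Setting $S:=B_2(u)\setminus B_2(v)$ we then have $|S|\ge\binom n2+n+1-2n=\binom{n-1}2$.

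For fixed $u,v,f$ I would argue as follows. For each $w\in S$ the image $f(w)$ lies in $B_2(v)$, hence $f(w)\neq w$ and $f(w)\notin S$; consequently the $2|S|$ vertices $\{w,f(w):w\in S\}$ are pairwise distinct, so the indicators $X_w:=\mathbf 1\bigl[\chi(w)\neq\chi(f(w))\bigr]$, $w\in S$, are mutually independent, each with mean $2p(1-p)$. Since $D\bigl(\chi^{(2)}(u),\chi^{(2)}(v)\circ f\bigr)\ge D_S:=\sum_{w\in S}X_w$ and $D_S\sim\mathrm{Bin}\bigl(|S|,2p(1-p)\bigr)$ has mean at least $(n-1)(n-2)p(1-p)=(1-o(1))n^2p(1-p)$, Chernoff's inequality (Lemma \ref{chernn}) gives, for all large $n$,
\[
\bP\Bigl[D\bigl(\chi^{(2)}(u),\chi^{(2)}(v)\circ f\bigr)\le\tfrac{n^2p(1-p)}{2}\Bigr]
\le\bP\bigl[D_S\le\tfrac{4}{5}\,\E D_S\bigr]
\le\exp\!\Bigl(-\tfrac{1}{50}(n-1)(n-2)p(1-p)\Bigr).
\]

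To finish I would union bound. There are at most $2^{2n}$ choices of the pair $\{u,v\}$, and at most $|\mathrm{Aut}(B_2(v))|\le n!$ graph isomorphisms $B_2(u)\to B_2(v)$ (at least one exists since $Q_n$ is vertex-transitive, and any automorphism of $B_2(v)$ fixes the unique centre $v$ and is determined by its action on $\Gamma(v)$). Hence the probability that some distinct pair satisfies $d(\chi^{(2)}(u),\chi^{(2)}(v))\le\tfrac{n^2p(1-p)}{2}$ is at most $2^{2n}\,n!\,\exp\bigl(-\tfrac{1}{50}(n-1)(n-2)p(1-p)\bigr)$. Since $p\le1/2$ gives $(n-1)(n-2)p(1-p)\ge\tfrac14 n^2p$ for large $n$, while the hypothesis $np/\log n\to\infty$ gives $n^2p=\omega(n\log n)$, the exponent here dominates $\log\bigl(2^{2n}n!\bigr)=O(n\log n)$, so this probability tends to $0$, as required. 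The main obstacle is really just the overlap estimate of the first paragraph: once each $2$-ball is known to retain a ``private'' part of size $\Theta(n^2)$, the independence and concentration go through routinely and comfortably beat the $2^{2n}n!$ union-bound factor.
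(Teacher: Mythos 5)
Your proposal is correct and takes essentially the same approach as the paper: bound the overlap of the two $2$-balls, observe that the disagreement indicators over the ``private'' part are independent Bernoulli$(2p(1-p))$ variables, apply Chernoff, and union-bound over the $2^{2n}$ choices of $\{u,v\}$ and the (at most) $n!$ isomorphisms $B_2(u)\to B_2(v)$. The only deviations are cosmetic improvements: you work directly with $S=B_2(u)\setminus B_2(v)$ (which, as you correctly argue, already gives pairwise-disjoint pairs $\{w,f(w)\}$) where the paper also excises the preimage $b^{-1}(B_2(u)\cap B_2(v))$, and you give the sharper overlap bound $|B_2(u)\cap B_2(v)|\le 2n$ where the paper is content with $4n$; neither change affects the order of the estimate.
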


\begin{proof}
Let $\chi$ be a random $(p,1-p)$-colouring of the hypercube $Q_n$. Let $u,v \in V(Q_n)$ be distinct vertices and let $b: B_2(u) \rightarrow B_2(v)$ be an isomorphism. Let $T = (B_2(u)\cap B_2(v)) \cup b^{-1}(B_2(u)\cap B_2(v))$ and let $Y = (B_2(u)\setminus T) \cup b(B_2(v)\setminus T)$. Let 
	\begin{align}
		N = \left|\left\{w \in B_2(u)\setminus T : \chi(w) \neq (\chi \circ b) (w)\right\}\right|.
	\end{align}

Since $(\chi(w))_{w \in Y}$ is a collection of independent $(p,1-p)$ random variables,
	\begin{align*}
		N \sim \mathrm{Bin}\left(\frac{n^2+n+2}{2}-|T|,2p(1-p)\right). 
	\end{align*}

A simple counting argument shows that $|T| \le 4n$ and so (for sufficiently large $n$) we may apply Lemma \ref{chernn} to get
	\begin{align*}
		\bP\left[N \le \frac{n^2p(1-p)}{2}\right] &\le \bP\left[\mathrm{Bin}\left(\frac{n^2-8n}{2},2p(1-p)\right)\le \frac{n^2p(1-p)}{2}\right] \\
		&\le \bP\left[\mathrm{Bin}\left(\frac{n^2}{3},2p(1-p)\right) \le \frac{2n^2p(1-p)}{3}\left(1- \frac{1}{4}\right)\right] \\
		&\le \exp\left\{-\frac{n^2p(1-p)}{48}\right\}.
	\end{align*}
Taking a union bound over all possible choices of vertices $u,v$ and isomorphisms $b$ we obtain that the probability that there are distinct vertices $u,v$ with $d(\chi^{(2)}(u),\chi^{(2)}(v)) \le \frac{n^2p(1-p)}{2}$ is at most
	\begin{align*}
		2^{2n}n! \exp\left\{-\frac{n^2p(1-p)}{48}\right\} = o(1).
	\end{align*}
\end{proof}

\begin{lem}\label{1strongunique}
For every $\varepsilon > 0$ there exists a constant $K>0$ such that the following holds: Let $q \ge n^{1+ \varepsilon}$ and let $\chi$ be a random $q$-colouring of the hypercube $Q_n$. Then with high probability, there do not exist distinct vertices $u,v \in V(Q_n)$ such that $d(\chi^{(1)}(u),\chi^{(1)}(v)) \le n - \frac{nK}{\log n }$.
\end{lem}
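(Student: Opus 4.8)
The plan is to mimic the structure of the proof of Lemma \ref{strongunique}, but using the fact that with $q$ very large the colourings of $1$-balls are essentially ``random words'' that rarely agree in many coordinates. Fix distinct vertices $u,v \in V(Q_n)$ and an isomorphism $b : B_1(u) \to B_1(v)$. Since $B_1(u)$ is a star with $n+1$ vertices, an isomorphism must send the centre to the centre, so $b(u)=v$ and $b$ restricts to a bijection between the leaf sets $\Gamma(u)$ and $\Gamma(v)$. As in the previous lemma, let $T = (B_1(u)\cap B_1(v)) \cup b^{-1}(B_1(u)\cap B_1(v))$ be the set of vertices whose colour is ``shared'' between the two balls under $b$, and let $Y = B_1(u)\setminus T$ be the remaining vertices, on which $(\chi(w))_{w\in Y}$ are independent $\mathrm{Unif}([q])$ variables independent of $(\chi\circ b)|_Y$ (which depends on colours outside $B_1(u)$). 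A simple counting argument (as in Lemma \ref{strongunique}, using that $B_1(u)\cap B_1(v)$ is a path of at most $3$ vertices when $u,v$ are at distance $1$ or $2$, and empty otherwise) shows $|T|\le 6$, so $|Y|\ge n-5$.

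Next I would bound the number $N = |\{w\in Y : \chi(w)\neq (\chi\circ b)(w)\}|$ of disagreements. For each $w\in Y$, conditioning on the value of $(\chi\circ b)(w)$, the probability that $\chi(w)$ equals it is exactly $1/q$; moreover these events are mutually independent over $w\in Y$. Hence the number of \emph{agreements} $|Y|-N$ is stochastically dominated by $\mathrm{Bin}(|Y|,1/q) \preceq \mathrm{Bin}(n,1/q)$. Since $q\ge n^{1+\varepsilon}$, we have $\E[\mathrm{Bin}(n,1/q)]\le n^{-\varepsilon}$, so the number of agreements is a $\mathrm{Bin}(n,1/q)$ variable with tiny mean; a direct union/Markov-type bound gives, for any fixed integer $m$,
\[
\bP\bigl[\,|Y|-N \ge m \,\bigr] \le \binom{n}{m}q^{-m} \le n^m q^{-m} \le n^{-\varepsilon m}.
\]
Thus $\bP[\,N \le n - \tfrac{nK}{\log n}\,] = \bP[\,|Y|-N \ge |Y| - n + \tfrac{nK}{\log n}\,]$, and since $|Y|\ge n-5$ this is at most $\bP[\,|Y|-N \ge \tfrac{nK}{\log n} - 5\,] \le n^{-\varepsilon(\frac{nK}{\log n}-5)} = \exp\{-\varepsilon(\frac{nK}{\log n}-5)\log n\} = \exp\{-(\varepsilon K + o(1)) n\}$.

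Finally I take a union bound over all ordered pairs $(u,v)$ of distinct vertices and all isomorphisms $b : B_1(u)\to B_1(v)$. There are at most $2^{2n}$ choices of $(u,v)$ and at most $n!\le n^n = \exp\{n\log n\}$ choices of $b$ (in fact $b$ is determined by the bijection of the $n$ leaves), so the total count is $\exp\{O(n\log n)\}$. Choosing $K$ large enough that $\varepsilon K n$ dominates this, say $K > 2/\varepsilon$, the union bound gives probability $\exp\{O(n\log n) - (\varepsilon K+o(1))n\}$; this does not quite work as written, so instead I note that the per-pair failure probability is already $\exp\{-\Theta(n)\}$ only when divided by $\log n$ in the exponent, meaning I should set $K$ so that $\varepsilon K \tfrac{n}{\log n}\log n - n\log n \to -\infty$, i.e. it suffices that $\varepsilon K > 1$ and then the exponent is $(\varepsilon K - 1 - o(1))n\log n \to -\infty$. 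Hence $K = 2/\varepsilon$ works and the probability of a bad pair existing is $o(1)$, completing the proof. The only mildly delicate point is getting the constant $K$ right so that the $\exp\{\Theta(n\log n)\}$ union bound is beaten; everything else is a routine independence-plus-Chernoff computation parallel to Lemma \ref{strongunique}.
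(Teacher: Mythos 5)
There is a genuine gap in your argument: the union bound over all star-isomorphisms $b\colon B_1(u)\to B_1(v)$ cannot be made to work with the threshold at hand. For a fixed $b$ you correctly obtain a per-bijection failure probability of order $\exp\{-(\varepsilon K+o(1))n\}$, but the number of bijections is $n!=\exp\{(1+o(1))\,n\log n\}$, so the combined bound is $\exp\{(1+o(1))n\log n-(\varepsilon K+o(1))n\}\to\infty$ for every constant $K$. Your attempted repair is where things go wrong: you state that ``the exponent is $(\varepsilon K-1-o(1))n\log n$'', but $\varepsilon\cdot\left(\frac{nK}{\log n}-O(1)\right)\cdot\log n=\varepsilon K n+O(\log n)$, not $\varepsilon K n\log n$; the true exponent is $n\log n-\varepsilon Kn+O(n)$, which goes to $+\infty$, and moreover $(\varepsilon K-1)n\log n\to+\infty$ (not $-\infty$) when $\varepsilon K>1$, so the sentence is internally inconsistent. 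The issue is structural: with the agreement threshold fixed at $\frac{nK}{\log n}$, the best per-bijection bound is $e^{-\Theta(n)}$, and no constant $K$ can beat a factor of $n!$.

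The paper sidesteps the $n!$ entirely by never choosing a bijection. It observes that if some isomorphism $b$ has an agreement at $w\in\Gamma(u)$, then $\chi(w)$ must lie in the set $S=\{\chi(x):x\in\Gamma(v)\}$ of colours used on $\Gamma(v)$; thus $d(\chi^{(1)}(u),\chi^{(1)}(v))\le n-\frac{nK}{\log n}$ forces $N=\{w\in\Gamma(u):\chi(w)\in S\}$ to have size at least about $\frac{nK}{\log n}$. One then union-bounds only over the $\binom{n}{r}$ $r$-subsets $Z$ of $Y=\Gamma(u)\setminus\Gamma(v)$ and uses that, conditional on $S$, $\bP[Z\subseteq N]=(|S|/q)^r\le (n/q)^r$. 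The per-vertex probability degrades from $1/q$ to $n/q$ (a factor $n$ per vertex, i.e. $n^r=e^{O(n)}$ in total), but the union bound cost drops from $n!$ to $\binom{n}{r}$, which is only $e^{O(n)}$; this trade is favourable. To fix your proof you would need to replace the per-bijection agreement count by this bijection-free quantity $|N|$ (or an equivalent matching-style argument), rather than trying to tune $K$.
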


\begin{proof}
Let $\varepsilon>0$ and let $K > 4/\varepsilon$. Let $q \ge n^{1+ \varepsilon}$ and let $\chi$ be a random $q$-colouring of the hypercube $Q_n$. Let $u,v \in V(Q_n)$ be distinct vertices. Let $T = \Gamma(u) \cap \Gamma(v)$, and let $Y = \Gamma(u) \setminus T$. Then $(\chi(w))_{w \in Y}$ is a collection of independent $\mathrm{Unif}\left([q]\right)$ random variables independent of $S := \left\{\chi(w):w \in \Gamma(v)\right\}$. Let us first observe $S$ and then set $N:= \left\{w \in \Gamma(u) : \chi(w) \in S\right\}$. Then (conditional on $S$) the probability that an arbitrary $r$-tuple of $Y$ is a subset of $N$ is $(|S|/q)^r$. Let $r = \left\lceil \tfrac{nK}{\log n} \right\rceil - 2$. We can apply a union bound to get
	 \begin{align*}
	 	\bP\left[|N| \ge r +2\right] &\le \sum_{Z \in Y^{(r)}}\bP\left[Z \subset N\right\} \le {|Y| \choose r} \left(|S|/q\right)^r \le \left(\frac{e|Y||S|}{rq}\right)^r
	\end{align*}
Since $|S|,|Y| \le n$, for sufficiently large $n$ we therefore have
	\begin{align*}
		\bP\left[|N| \ge r +2\right] \le \left(en^2/rq\right)^r \le \left(3K^{-1} n^{-\varepsilon}\log n\right)^r \le n^{-2\varepsilon r /3} \le 2^{-\frac{\varepsilon K n}{2}}. 
	\end{align*}
Taking a union bound over all possible pairs of distinct vertices $u,v$ we obtain that the probability that there exist distinct vertices $u,v$ with $d(\chi^{(1)}(u),\chi^{(1)}(v)) \le n-\frac{nK}{\log n }$ is at most $2^{2n -\frac{\varepsilon K n}{2}}.$ Since $K > \frac{4}{\varepsilon}$, we see the probability is $o(1)$.
\end{proof}

With the proofs of these lemmas in mind, there is an easy argument proving Theorem \ref{soft}

\begin{proof}[Sketch proof of Theorem \ref{soft}]
Following the proof of Lemma \ref{strongunique}, one can show that the colouring of $2$-balls are unique when $\tfrac{pn}{\log n} \rightarrow \infty.$ Let $\left(\lambda_{\ell}\right)_{\ell \in [2^n]}$ be the collection of colourings of $3$-balls. Without loss of generality, suppose that $\lambda_{1}$ is the colouring of the $3$-ball around $0$. Note then that for each $i \in [n]$, the colouring of the $2$-ball around $e_i$ is contained in $\lambda_{1}$. Since the colourings of $2$-balls are unique, we can then discern which $\ell \in [2^n]$ correspond to neighbours of $0$. We are then iteratively able to work out $B_k(0)$ for $k=1,\ldots,n$.
\end{proof}

We now come to considering the local behaviour of bijections of the hypercube. For this we will need a notion for how spread out the image of a neighbourhood is. Note that if $h$ is an isomorphism then, for any vertex $v$, $|\Gamma^r(h(\Gamma(v)))| = |\Gamma^r(\Gamma(h(v)))| = {n \choose r+1}$.

\begin{defn}\label{clusterdef}
For natural numbers $r$ and $R$ (where $R$ may be a function of $n$) define $\mathrm{Cluster}^r_R$ to be the set of bijections $h : V(Q_n) \rightarrow V(Q_n)$ such that $|\Gamma^r(h(\Gamma(v)))| \le \binom{n}{r+1} + R$ for all $v \in V(Q_n),$ i.e.
	\begin{align*}
		\mathrm{Cluster}^r_R = \left\{h\in \mathrm{Bij} : \forall v \in V(Q_n), |\Gamma^r(h(\Gamma(v)))| \le \binom{n}{r+1} + R\right\}.
	\end{align*}
\end{defn}

We now show that if $\chi$ is a random $2$-colouring and $K>0$ is sufficiently large, then with high probability, every $f \in \mathrm{Isom}^{(2)}(\chi)$ satisfies $f^{-1} \in \mathrm{Cluster}^2_{Kn^2p^{-1}\log n }$. This means that in Theorem \ref{main} we need only consider bijections $f$ such that for every vertex $v$, the set $f^{-1}(\Gamma(v))$ has a second neighbourhood that is close to minimal in size.

\begin{lem}\label{scott}
Let $p=p(n) \in (0,1/2]$ be such that $\frac{np}{\log n} \rightarrow \infty$. Then there exists a constant $K > 0$ such that the following holds: Let $\chi$ be a random $(p,1-p)$-colouring of the hypercube $Q_n$. Then with high probability, every $f \in \mathrm{Isom}^{(2)}(\chi)$ satisfies $f^{-1} \in \mathrm{Cluster}^2_{Kn^2 p^{-1}\log n }$.
\end{lem}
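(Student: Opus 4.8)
The plan is to isolate a high-probability property of $\chi$ and then argue \emph{deterministically} that every $f\in\mathrm{Isom}^{(2)}(\chi)$ has $f^{-1}\in\mathrm{Cluster}^2_{Kn^2p^{-1}\log n}$ — a union bound over the $(2^n)!$ bijections $f$ is of course hopeless. First I would work on the intersection of two events, each with high probability under $\tfrac{np}{\log n}\to\infty$: the event of Lemma~\ref{strongunique}, on which all coloured $2$--balls $\chi^{(2)}(w)$ are pairwise at distance more than $\tfrac12 n^2p(1-p)$ and hence pairwise distinct; and a ``regularity'' event $E$, obtained from Chernoff's inequality (Lemma~\ref{chernn}) and a union bound over the $2^n$ vertices, on which for every vertex $w$ the quantities $|\chi^{-1}(1)\cap\Gamma(w)|$, $|\chi^{-1}(1)\cap\Gamma^2(w)|$ and $|\chi^{-1}(1)\cap B_2(w)|$ all lie within $O\bigl(\sqrt{n^2p\log n}\bigr)$ of their means.

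Next I would extract the structural consequence of $f\in\mathrm{Isom}^{(2)}(\chi)$. Set $g=f^{-1}$, fix a vertex $v$, and put $u_i=v+e_i$; then $\chi^{(2)}(g(u_i))\cong\chi_f^{(2)}(u_i)$ says that transporting the $2$--ball structure of $B_2(u_i)$ along $g$ and colouring by $\chi$ gives a coloured graph isomorphic to $\chi^{(2)}(g(u_i))$. For large $n$ the centre of a $2$--ball is graph-theoretically distinguished — it is the unique vertex all of whose neighbours have degree $n$ — so this isomorphism sends the image of the centre, $g(u_i)$, to the centre of $B_2(g(u_i))$, hence $g(\Gamma(u_i))\to\Gamma(g(u_i))$ and $g(\Gamma^2(u_i))\to\Gamma^2(g(u_i))$ colour-preservingly. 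Since $B_2(u_i)\cap B_2(u_j)\subseteq B_2(u_i)$ and, by a direct check, $\bigcup_{j\ne i}\bigl(B_2(u_i)\cap B_2(u_j)\bigr)=B_2(u_i)$, each coloured $2$--ball $\chi^{(2)}(g(u_i))$ must contain, at positions determined purely by the combinatorics of the cube, prescribed copies of the coloured configurations on the sets $g\bigl(B_2(u_i)\cap B_2(u_j)\bigr)$; in particular $\chi^{(2)}(g(u_i))$ is reconstructed from $\bigl(\chi^{(2)}(g(u_j))\bigr)_{j\ne i}$ together with how $g$ identifies the relevant overlaps. The role of $E$ is that it fixes the colour-counts appearing in all these configurations, so the constraints are genuine rather than vacuous.

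Finally I would bound $|\Gamma^2(g(\Gamma(v)))|$. Using $\Gamma^2(g(\Gamma(v)))\subseteq g(\Gamma(v))\cup\bigcup_i\Gamma^2(g(u_i))$ and the elementary estimate $\#\{x:\ x\in\Gamma^2(g(u_i))\text{ for at least three }i\}\le\tfrac13\sum_i|\Gamma^2(g(u_i))|=\tfrac n3\binom n2=\binom n3+O(n^2)$, it suffices to show that all but $O(n^2p^{-1}\log n)$ vertices of $\bigcup_i\Gamma^2(g(u_i))$ lie in at least three of the sets $\Gamma^2(g(u_i))$ — equivalently, that all but $O(p^{-1}\log n)$ of the vertices $g(u_1),\dots,g(u_n)$ lie in a common ball of radius $O(1)$. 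Suppose not: there is a set $I$, $|I|>Cp^{-1}\log n$, of ``outlier'' indices whose $2$--balls $B_2(g(u_i))$ are pairwise far and far from the bulk, and by the previous paragraph each outlier's coloured $2$--ball — living on a region of the cube essentially disjoint from the rest — must coincide, up to a $2$--ball automorphism, with a colouring forced by the colourings around the bulk. Conditioning on $\chi$ near the bulk and taking a union bound over $v$, over the (coarse) location of the bulk, over $I$ and the positions of the outlier $2$--balls, and over the admissible placements of the forced sub-configurations — each term bounded using the anti-concentration estimates of Lemma~\ref{bounds} (eq.~\eqref{bin1}), the point being that prescribing a coloured $2$--ball costs a factor exponentially small in the number of its cells — one gets probability $o(1)$, with room to spare, so that $|\Gamma^2(g(\Gamma(v)))|\le\binom n3+Kn^2p^{-1}\log n$ for an absolute constant $K$. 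I expect the main obstacle to be setting up exactly this last estimate: one must take the configuration forced on an outlier $2$--ball to be large enough — essentially all of $g(B_2(u_i))$, assembled from its overlaps with the non-outlier $2$--balls — that its prescribed occurrence has probability far smaller than the reciprocal of the union bound needed merely to locate the bulk, and verifying that the reconstruction of $g(B_2(u_i))$ from $\bigl(\chi^{(2)}(g(u_j))\bigr)_{j\ne i}$ genuinely pins down that much of its colouring is the delicate point.
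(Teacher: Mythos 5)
Your plan differs structurally from the paper's, and the place it differs is where it breaks. You open by insisting that a union bound over bijections $f$ is hopeless and that one must instead condition on a high-probability ``regularity'' event and argue deterministically; but the paper's proof \emph{is} a union bound, just not over $f$: it union-bounds over the data that actually determine the constraint, namely the base vertex $v$, the $n$-set $A=f^{-1}(\Gamma(v))$ (at most $\binom{2^n}{n}\le 2^{n^2}$ choices), the $n$ permutations $(\pi^i)$ (at most $(n!)^n=\exp\{O(n^2\log n)\}$ choices), and a colouring $c$ of $\Gamma(v)\cup\Gamma^3(v)$ which is summed out against $\sum_c\bP[Q=c]=1$. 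Given these, $(\chi(u))_{u\in\Gamma^2(A)}$ is forced, and since $|\Gamma^2(A)|$ exceeds $|\Gamma(v)\cup\Gamma^3(v)|=n+\binom{n}{3}$ by at least $Kn^2p^{-1}\log n-n$, the excess vertices each pay a factor $\le 1-p\le e^{-p}$, giving $\exp\{-\Omega(Kn^2\log n)\}$, which dominates the union bound. Nothing about the geometry of $A$ --- one cluster, two clusters, or a fully spread-out antichain --- enters at all. Your auxiliary event $E$ (approximate colour-counts in balls) and your appeal to the anti-concentration estimate \eqref{bin1} of Lemma~\ref{bounds} play no role here: the probability you need is just $\bP[\text{a fixed colouring on $N$ excess cells}]\le(1-p)^N$, which is elementary.

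The concrete gap in your route is the reduction ``all but $O(n^2p^{-1}\log n)$ of $\bigcup_i\Gamma^2(g(u_i))$ lie in at least three sets, equivalently all but $O(p^{-1}\log n)$ of the $g(u_i)$ lie in a common $O(1)$-ball'' followed by ``suppose not: there is a set $I$ of outliers pairwise far and far from the bulk.'' The negation of the clustering statement does not produce a bulk-plus-outliers picture. Concretely, $A=g(\Gamma(v))$ could split into two clusters of size $n/2$ around vertices $w_1,w_2$ at large distance: then no single ball captures $n-O(p^{-1}\log n)$ of the $g(u_i)$, there are no pairwise-far outliers far from ``the'' bulk, and yet the structure is wildly non-local --- indeed for $x=w_1+e_a+e_b+e_c$ with exactly one or two of $a,b,c$ in the cluster-$1$ directions one already gets $\Theta(n^3)$ vertices in only one or two of the $\Gamma^2(g(u_i))$, swamping your $O(n^2p^{-1}\log n)$ budget. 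So your dichotomy is false, and the case analysis would have to be rebuilt to handle an arbitrary partition of $A$ into clusters, with the probabilistic cost and the entropy of the partition traded off carefully. That is doable but is essentially re-deriving, in a harder way, what the paper gets for free by never structuring $A$ in the first place. Separately, several of your intermediate assertions blur the roles of $g=f^{-1}$ and the $2$-ball isomorphisms $\phi_i$: the statement $\chi^{(2)}(g(u_i))\cong\chi_f^{(2)}(u_i)$ says that $\chi\restriction_{B_2(g(u_i))}=\chi\circ g\circ\phi_i^{-1}$ for some isomorphism $\phi_i:B_2(u_i)\to B_2(g(u_i))$; it does \emph{not} make $g$ send $\Gamma(u_i)$ into $\Gamma(g(u_i))$, and $g(B_2(u_i)\cap B_2(u_j))$ need not lie in $B_2(g(u_i))$ at all. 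Your ``forced sub-configurations'' are obtained only after fixing all the $\phi_i$'s, i.e.\ after committing to the same $(n!)^n$-size union bound the paper uses, so the detour buys nothing.
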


The proof of Lemma \ref{scott} is a little involved so we provide a brief outline here. Let $\chi$ be a random $(p,1-p)$-colouring of the hypercube $Q_n$, let $f \in \mathrm{Isom}^{(2)}(\chi)$ and fix a vertex $v \in V(Q_n)$. Recall that $f \in \mathrm{Isom}^{(2)}(\chi)$ means that $\chi^{(2)}(f^{-1}(w)) \cong \chi_f^{(2)}(w)$ for each neighbour $w$ of $v$. Therefore it is possible to ``match up" $(\chi(u))_{u \in \Gamma^2(f^{-1}(\Gamma(v)))}$ with $(\chi_f(u))_{u \in B_3(v)}$. We bound the probability that this is possible by considering whether it is possible for $(\chi(u))_{u \in \Gamma^2(f^{-1}(\Gamma(v)))}$ to match up with any colouring of $B_3(v)$. If $\Gamma^2(f^{-1}(\Gamma(v)))$ is too large, then this happens with very small probability because we have to match up too many colours. Applying a union bound, we are able to conclude that $\Gamma^2(h^{-1}(\Gamma(x)))$ must be sufficiently small for any $h \in \mathrm{Isom}^{(2)}(\chi)$ and $x \in V(Q_n)$.

\begin{proof}
Let $\chi$ be a random $(p,1-p)$-colouring of the hypercube $Q_n$. Suppose there exists an $f \in \mathrm{Isom}^{(2)}(\chi)$ such that $f^{-1} \not\in \mathrm{Cluster}^2_{Kn^2 p^{-1}\log n }$ (for $K>0$ to be determined later). Pick $v \in V(Q_n)$ such that $|\Gamma^2(f^{-1}(\Gamma(v)))| > \binom{n}{3} + Kn^2 p^{-1}\log n$. Since $f \in \mathrm{Isom}^{(2)}(\chi)$,
\[
\chi_f^{(2)}(v+e_i) \cong \chi ^{(2)}(f^{-1}(v+e_i))
\]
for each $i \in [n]$, where we carry out addition mod $2$. Thus, there is a permutation $\pi^i$ of $[n]$ such that for all distinct $j,k \in [n]$
	\begin{align*}
		\chi_f(v+e_i + e_j + e_k) = \chi(f^{-1}(v+e_i) + e_{\pi^i(j)}+e_{\pi^i(k)}).
	\end{align*}
	
Let $A = \left\{f^{-1}(v+e_1),\ldots,f^{-1}(v+e_n)\right\}$, so then $(\chi(u))_{u \in \Gamma^2(A)}$ is determined by $(\chi \circ f^{-1}(u))_{u \in \Gamma(v)\cup \Gamma^3(v)}$ and $(\pi^i)_{i \in n}$. Therefore there must exist a $2$-colouring $c$ of $\Gamma(v)\cup \Gamma^3(v)$, a subset $A \subset V(Q_n)$ for which $|A| = n$ and $\Gamma^2(A) > \binom{n}{3} + Kn^2 p^{-1}\log n $, and a family of permutations $(\pi^i)_{i\in [n]}$, which is compatible with $(\chi(u))_{u \in \Gamma^2(A)}$. Fix a vertex $v$, a colouring $c$, a set $A$ and a family of permutations $(\pi^i)_{i\in [n]}$. 

We may express each vertex $w \in \Gamma(v)\cup \Gamma^3(v)$ as $w = v+ e_i + e_j + e_k$ where $j \neq k$. Further fix this expression for $w$ so that $i$ is as small as possible and $j < k$ (so for each $w \in \Gamma(v)\cup \Gamma^3(v)$ we have fixed $i,j,k$ such that $w = v+e_i + e_j + e_k$). Then if the vertex $v$, the colouring $c$, the set $A$ and the family of permutations $(\pi^i)_{i\in [n]}$ are compatible with $(\chi(u))_{u \in \Gamma^2(v)}$, we have $\chi(f^{-1}(v+e_i) + e_{\pi^i(j)}+e_{\pi^i(k)}) = c(w)$. For ease of reading, define $h$ by $$h(i,j,k) := f^{-1}(v+e_i) + e_{\pi^i(j)}+e_{\pi^i(k)}.$$ By independence, the probability that $(\chi(u))_{u \in \Gamma^2(A)}$ is compatible with $v, c, A$ and $(\pi^i)_{i \in [n]}$ is
	\begin{align}
		\prod_{h(i,j,k) \in \Gamma^2(A)} p^{1-c(v+e_i+e_j+e_k)}(1-p)^{c(v+e_i+e_j+e_k)}. \label{messyprod}
	\end{align}
(Note that we are using the colours $0$ and $1$.)

We have an injection $t : \Gamma(v)\cup \Gamma^3(v) \rightarrow \Gamma^2(A)$ such that $\chi \circ t = c$. Let $B = t(\Gamma(v) \cup \Gamma^3(v))$. Splitting \eqref{messyprod} into $B$ and $\Gamma^2(A) \setminus B$ gives

	\begin{align}
		&\prod_{h(i,j,k) \in \Gamma^2(A)\setminus B} p^{1-c(v+e_i+e_j+e_k)}(1-p)^{c(v+e_i+e_j+e_k)} \prod_{x \in B} p^{1-c(t^{-1}(x))}(1-p)^{c(t^{-1}(x))} \nonumber \\
		&\le (1-p)^{|\Gamma^2(A) \setminus B|}\prod_{w \in \Gamma(v) \cup \Gamma^3(v)} p^{1-c(w)}(1-p)^{c(w)}. \nonumber
	\end{align}
	
The right hand product is the probability that a random $(p,1-p)$-colouring of $\Gamma(v) \cup \Gamma^3(v)$ (denote this random colouring $Q$) is equal to $c$. Recall that $|\Gamma^2(A)| \ge \binom{n}{3} + Kn^2 p^{-1}\log n$ and $|B| = \binom{n}{3} + n$ so that $|\Gamma^2(A) \setminus B| \ge Kn^2 p^{-1}\log n - n$. Therefore the probability that $(\chi(u))_{u \in \Gamma^2(A)}$ is compatible with $v, c, A$ and $(\pi^i)_{i \in [n]}$ is at most
	\begin{align}
		(1-p)^{|\Gamma^2(A) \setminus B|} \bP\left[Q = c\right] &\le \exp\left\{-p(Kn^2 p^{-1}\log n -n)\right\}\bP\left[Q = c\right] \nonumber \\
		&\le \exp\left\{-\frac{K}{2}n^2\log n\right\}\bP\left[Q = c\right].
	\end{align}
The number of choices for $v,A$ and the permutations $(\pi^i)_{i\in [n]}$ is at most
	\begin{align}
		2^n 2^{n^2} (n!)^n &\le \exp\left\{Cn^2\log n \right\}. \label{picky1}
	\end{align}
So the probability that $(\chi(u))_{u\in \Gamma^2(A)}$ is compatible with a fixed $c$ and any such choice of $v,A$ and permutations $(\pi^i)_{i\in [n]}$ is at most
	\begin{align}
		\exp\left\{Cn^2\log n\right\}\exp\left\{-\frac{K}{2}n^2\log n\right\}\bP\left[Q = c\right] = \exp\left\{\left(C-\frac{K}{2}\right)n^2\log n \right\} \bP\left[Q=c\right]. \nonumber
	\end{align}
	
Finally, we sum over the colourings to get that the probability $(\chi(u))_{u\in \Gamma^2(A)}$ is compatible for any such choice of $v,c,A$ and permutations is at most $\exp\left\{(C-\frac{K}{2})n^2\log n \right\}$. This upper bound is $o(1)$ provided $K> 2C$.
\end{proof}

In fact for any $C>1$, if $n$ is sufficiently large, then \eqref{picky1} holds, and so the result holds for any $K>2$. A similar result holds for $q$-colourings of $1$--balls.

\begin{lem}\label{1scott}
Let $\alpha > 0$ and let $\varepsilon : \bN \rightarrow [\alpha,\infty)$. Then there exists a constant $K > 0$ such that the following holds: Let $q \ge Kn^{1+\frac{1}{2\varepsilon(n)}}$ and let $\chi$ be a random $q$-colouring of the hypercube $Q_n$. Then with high probability, every $f \in \mathrm{Isom}^{(1)}(\chi)$ satisfies $f^{-1} \in \mathrm{Cluster}^1_{\varepsilon(n) n^2}$.
\end{lem}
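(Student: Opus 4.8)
The plan is to follow the scheme of the proof of Lemma~\ref{scott}, but to drop one step that would be fatal here: we must \emph{not} union bound over the matching permutations. In Lemma~\ref{scott} the permitted cluster radius $Kn^2p^{-1}\log n$ contains a free parameter $K$, large enough to swallow the $(n!)^n$ choices of coordinate permutations $\pi^i$; here the radius $\varepsilon(n)n^2$ is prescribed and, in the only nontrivial range $\varepsilon(n)\le\tfrac12+o(1)$, comparable to $\binom n2$, so a factor $(n!)^n=e^{(1+o(1))n^2\log n}$ cannot be afforded. The saving observation is that a $1$--ball is a star: a colour-preserving isomorphism between two $1$--balls maps centre to centre and merely matches leaves, so membership in $\mathrm{Isom}^{(1)}(\chi)$ forces only the colour \emph{multiset} of each neighbourhood to agree with that of a leaf-set of $\chi_f$, which is weak enough that we need fix only $v$ and the set $f^{-1}(\Gamma(v))$.

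Concretely, suppose $f\in\mathrm{Isom}^{(1)}(\chi)$ with $f^{-1}\notin\mathrm{Cluster}^1_{\varepsilon(n)n^2}$, and choose $v$ with $|\Gamma^1(f^{-1}(\Gamma(v)))|>\binom n2+\varepsilon(n)n^2$. Put $a_i=f^{-1}(v+e_i)$, $A=\{a_1,\dots,a_n\}$, $N':=|A\cup\Gamma(A)|\ge\binom n2+\varepsilon(n)n^2=:N_0$, and $c:=\chi\circ f^{-1}|_{B_2(v)}$. Since $B_1(v+e_i)$ is a star, $\chi^{(1)}(a_i)\cong\chi_f^{(1)}(v+e_i)$ gives $\chi(a_i)=c(v+e_i)$ and $\{\!\{\chi(u):u\in\Gamma(a_i)\}\!\}=\{\!\{c(w):w\in\Gamma(v+e_i)\}\!\}$ for each $i\in[n]$. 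We union bound over $v$ (at most $2^n$ choices) and over the ordered tuple $(a_1,\dots,a_n)$ (at most $2^{n^2}$ choices), keeping only ``bad'' tuples, those with $N'\ge N_0$. For such a fixed $v$ and tuple we bound the probability that a suitable $c$ exists by $\sum_c\bP[\,\chi(a_i)=c(v+e_i)\ \forall i\text{ and the }n\text{ multiset constraints hold}\,]$, the sum over the $q^{|B_2(v)|}$ colourings $c$ of $B_2(v)$. For a fixed $c$ we reveal $\chi$ on $A\cup\Gamma(A)$ greedily: first the $n$ centres $a_i$ (contributing $q^{-n}$ from the centre constraints), then, for $i=1,\dots,n$ in turn, the as-yet-unrevealed vertices of $\Gamma(a_i)$; if $n_i$ of them are new, then conditional on the past the multiset constraint for $\Gamma(a_i)$ has probability at most $n_i!/q^{n_i}\le n^{n_i}/q^{n_i}$. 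Multiplying gives $\bP[\cdot\mid c]\le n^{N'}q^{-N'}$, so the probability for this $v$ and tuple is at most $q^{|B_2(v)|}(n/q)^{N'}\le q^{|B_2(v)|}(n/q)^{N_0}$, using $n/q<1$ and $N'\ge N_0$.

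Summing, the probability that such an $f$ exists is at most
\[
2^{n^2+n}\,q^{\,|B_2(v)|-N_0}\,n^{N_0}=2^{n^2+n}\,q^{\,n+1-\varepsilon(n)n^2}\,n^{\binom n2+\varepsilon(n)n^2}.
\]
The exponent of $q$ is negative, so substituting the smallest admissible value $\log q\ge\log K+(1+\tfrac1{2\varepsilon(n)})\log n$ is the worst case; the $n^2\log n$ terms then cancel \emph{exactly}, leaving a bound of the shape $\exp\{n^2(\log 2-\varepsilon(n)\log K)+O(n\log n)\}$, which is $o(1)$ provided $K>2^{1/\alpha}$ (recall $\varepsilon(n)\ge\alpha$). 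The one delicate point is precisely this cancellation: since $\varepsilon(n)$ is not a free large parameter there is no polynomial slack, so it is essential to use the greedy-revealing estimate $\prod_i n_i!\le n^{N'}$ with $N'$ tied exactly to $|A\cup\Gamma(A)|$, rather than the wasteful $(n!)^n$, and to let the constant $K$ — not any power of $n$ — close the union bound.
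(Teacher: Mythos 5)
Your proof is correct and follows essentially the same route as the paper's. The paper's proof of Lemma~\ref{1scott} does exactly the manoeuvre you identify: rather than union-bounding naively over the $(n!)^n$ matching permutations, it defines $r_i = |\Gamma(a_i)\setminus\Gamma(\{a_1,\dots,a_{i-1}\})|$, observes that given the earlier choices there are at most $r_i!$ inequivalent permutations $\pi^i$, and bounds the total choice by $\prod_i r_i! \le n^{|\Gamma(A)|}$, which then cancels $q^{-|\Gamma(A)|}$ except for the factor $K^{-\varepsilon n^2+O(n)}$. You get the identical accounting by observing directly that a $1$--ball isomorphism is just a multiset constraint and by greedily revealing the colours on $A\cup\Gamma(A)$, bounding each conditional multiset-match probability by $n_i!/q^{n_i}$; this is the same incremental-revelation idea phrased without introducing permutation equivalence classes, and it arrives at the same quantitative bound $2^{O(n^2)}K^{-\alpha n^2}$ and the same requirement $K > 2^{1/\alpha}$.
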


In Theorem \ref{1ball}, we consider $q = n^{2 + \Theta(\log^{-\frac{1}{2}} n)}$ which corresponds to $\varepsilon(n) = \frac{1}{2} - \Theta(\log^{-\frac{1}{2}}(n))$. The proof of Lemma \ref{1scott} is much like the the proof of Lemma \ref{scott} but in order to minimise the exponent in Theorem \ref{1ball}, we carefully bound the choice of permutations.

\begin{proof}[Proof of Lemma \ref{1scott}]
Let $\varepsilon = \varepsilon(n)$ be as above. Let $K>0$ be a constant (which we will choose later) and let $q \ge Kn^{1+\frac{1}{2\varepsilon}}$. Let $\chi$ be a random $q$-colouring of the hypercube $Q_n$. Suppose there exists an $f \in \mathrm{Isom}^{(1)}(\chi)$ such that $f^{-1} \not\in \mathrm{Cluster}^1_{\varepsilon n^2}$, and pick $v \in V(Q_n)$ such that $|\Gamma(f^{-1}(\Gamma(v)))| > \binom{n}{2} + \varepsilon n^2$. Note that for each $i \in [n]$, $\chi_f ^{(1)}(v+e_i) \cong \chi ^{(1)}(f^{-1}(v+e_i))$ and so there are permutations $\pi^i$ of $[n]$ for each $i \in [n]$, such that for distinct $i,j \in [n]$
	\begin{align}
		\chi_f(v+e_i + e_j) = \chi(f^{-1}(v+e_i) + e_{\pi^i(j)}). \nonumber
	\end{align}
Let $A = \left\{f^{-1}(v+e_1),\ldots,f^{-1}(v+e_n)\right\}$. Then $(\chi(u))_{u \in \Gamma(A)}$ is determined by $(\chi_f(u))_{u \in B_2(v)}$ and $(\pi^i)_{i \in n}$. Therefore there must exist a $q$-colouring $c$ of $B_2(v)$, a subset $A \subset V(Q_n)$ for which $|A| = n$ and $\Gamma(A) > \binom{n}{2} + \varepsilon n^2$, and a family of permutations $(\pi^i)_{i\in [n]}$, which determines $(\chi(u))_{u \in \Gamma(A)}$. Fix a vertex $v$, a colouring $c$, a set $A$, and a family of permutations $(\pi^i)_{i\in [n]}$. Then the probability that $(\chi(u))_{u \in \Gamma(A)}$ is compatible with $v, c, A$ and $(\pi^i)_{i \in [n]}$ is $q^{-|\Gamma(A)|}$.

There are $2^n$ choices for $v$, and $q^{\binom{n}{2} + O(n)}$ choices for the colouring $c$, and at most $2^{n^2}$ choices for the set $A$. Fix a vertex $v$, a colouring $c$ and fix $A= \left\{f^{-1}(v+e_1),\ldots,f^{-1}(v+e_n)\right\}$ with $|\Gamma(A)| \ge \binom{n}{2}+ 1 + \varepsilon(n) n^2$. For ease of reading we define $a_i = f^{-1}(v+e_i)$ for each $i \in [n]$. Since $c^{(1)}(v+e_{i}) = \chi^{(1)}(a_i)$, there has to exist a permutation $\pi^i$ such that $c(v+e_i+e_k) = \chi(a_i + e_{\pi^i(k)})$ for all $k \in [n]$. For each $i \in [n]$, consider an equivalence relation $\sim_i$ on permutations where $\pi \sim_i \pi'$ if and only if $c(v+e_i+e_{\pi(k)}) = c(v+e_i+e_{\pi'(k)})$ for all $k \in [n]$. For each $i\in [n]$, pick an arbitrary permutation from each equivalence class to form a set of representatives $P^i$. So then for all $i\in [n]$ there must be a $\pi^i \in P^i$ such that $c(v+e_i+e_k) = \chi(a_i + e_{\pi^i(k)})$ for all $k \in [n]$.

Let $r_i = |\Gamma(a_i) \setminus \Gamma(\left\{a_1,\ldots,a_{i-1}\right\})|$. Note that if we have picked permutations $\pi^1,\ldots,\pi^{i-1}$, then we have at most $r_i!$ choices from $P^i$ for permutation $\pi^i$ (since the colours of $n-r_i$ neighbours of $a_i$ have already been determined). We can therefore bound the total number of choices for the permutations (from the $P^i$) by
	\begin{align*}
		\prod_{i\in[n]}r_i! \le n^{\sum_{i\in[n]}r_i} = n^{|\Gamma(A)|}.
	\end{align*}
By a union bound, the probability that $(\chi(u))_{u \in \Gamma(A)}$ is compatible with any choice of $v, c, A$ and $(\pi^i)_{i \in [n]}$ is at most
	\begin{align}
		2^n q^{\binom{n}{2} + O(n)}  2^{n^2} n^{|\Gamma(A)|}q^{-|\Gamma(A)|} &\le q^{\binom{n}{2} + O(n)}2^{O(n^2)}(n/q)^{\binom{n}{2} + \varepsilon n^2} \nonumber \\
		&\le n^{n^2(\frac{1}{2} + \varepsilon)}q^{-\varepsilon n^2 + O(n)}2^{O(n^2)}. \nonumber
	\end{align}

Recalling that $q \ge Kn^{1+\frac{1}{2\varepsilon}}$ and that $\varepsilon \ge \alpha$ we see that this probability is at most
	\begin{align}
		n^{n^2(\frac{1}{2} + \varepsilon)}n^{-\varepsilon n^2 - \frac{1}{2}n^2 + O(n)}K^{-\varepsilon n^2 + O(n)}2^{O(n^2)} \le 2^{O(n^2)}K^{-\alpha n^2}. \nonumber
	\end{align}
If $K$ is sufficiently large, then this upper bound is $o(1)$ and we are done.
\end{proof}

\section{Structural results}\label{structure}
Let $A \subseteq V(Q_n)$ with $|A| = n$. In this section, we start by proving a stability result regarding the size of the neighbourhood of $A$. We will also prove a slightly weaker stability result when the neighbourhood of $A$ is allowed to be quite large. This allows us to later deduce some properties of functions $f \in \mathrm{Isom}^{(r)}(\chi)$ where $\chi$ is a random colouring. 

\begin{defn}\label{approxdefn}
For a natural number $s$ (which may depend on $n$) we say that a bijection $f$ on $V(Q_n)$ is \em $s$-approximately local \em if for all $v \in V(Q_n)$ there exists a $g(v) \in V(Q_n)$ such that $|f(\Gamma(v)) \cap \Gamma(g(v))| \ge n-s$.  We call the function $g$ a \em dual \em of $f$.

If $f$ has a unique dual $g$, then we write $f_{\star} = g$. Note that this will be the case when $s < \tfrac{n}{2}$. We also define $\mathrm{Local}_s$ as the set of $s$-approximately local functions.
\end{defn}

Note that if $f$ is $s$-approximately local, then the set $\left\{f(w) : w \in \Gamma(v)\right\}$ is clustered around a vertex of $Q_n$, although perhaps not around $f(v)$. Note also that a bijection $f$ being $s$-approximately local where $s$ is small does not force $f$ to be an automorphism.  For example, the map on $Q_{2k}$ that fixes vertices of even weight and maps vertices of odd weight to the antipodal point is $0$-approximately local but not an automorphism.

For the proof of Theorem \ref{f-1-stability}, we will need the following well-known result of Harper \cite{HARP}, which uses the power set $\cP(n)$ interpretation of the hypercube $Q_n$.

\begin{thm}\label{harpermod2}
Let $<_H$ be the ordering of $V(Q_n)$ such that $A <_H B$ if $|A| < |B|$ or if $|A| = |B|$ and $\max ((A \cup B) \setminus (A \cap B)) \in B$. For each $\ell \in \bN$, let $S_{\ell}$ be the first $\ell$ elements of $V(Q_n)$ according to $<_H$. If $D \subset V(Q_n)$ with $|D| = \ell$, then
	\begin{align}
		|\Gamma(D) \cup D| \ge |\Gamma(S_{\ell}) \cup S_{\ell}|. \nonumber
	\end{align}
\end{thm}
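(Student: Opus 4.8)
The statement to be proved is Harper's vertex-isoperimetric inequality. Write $N(\cA) := \cA \cup \Gamma(\cA)$ for the closed neighbourhood of a set $\cA \subseteq V(Q_n)$. Since $|N(\cA)| = |\cA| + |\Gamma(\cA)\setminus\cA|$ and $|D| = |S_\ell| = \ell$ is fixed, the claim is equivalent to: among all $\ell$-element subsets of $V(Q_n)$, the initial segment $S_\ell$ of $<_H$ minimises $|N(\cdot)|$. The plan is the classical one: first reduce, by a shifting (compression) argument, to families that are stable under all codimension-one compressions, and then show that such families are exactly initial segments of $<_H$.

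First I would set up the compressions. For $1 \le i < j \le n$ define $C_{ij}$ acting on a family $\cA \subseteq V(Q_n)$ as follows: for each $A \in \cA$ with $j \in A$ and $i \notin A$, replace $A$ by $(A\setminus\{j\})\cup\{i\}$ if this set does not already lie in $\cA$, and leave every other member of $\cA$ unchanged. Then $|C_{ij}(\cA)| = |\cA|$. The key lemma is the containment
\[
N\bigl(C_{ij}(\cA)\bigr) \;\subseteq\; C_{ij}\bigl(N(\cA)\bigr),
\]
which, together with the fact that $C_{ij}$ preserves cardinalities, yields $|N(C_{ij}(\cA))| \le |N(\cA)|$. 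Its proof is a local check: fixing a vertex $x$, only coordinates $i,j$ are relevant and the only other affected vertex is $x' := x\triangle\{i,j\}$; one splits into the four cases according to which of $i,j$ lie in $x$ and verifies the membership implication in each. Since the quantity $\sum_{A\in\cA}\sum_{k\in A}k$ strictly decreases whenever some $C_{ij}$ acts non-trivially, iterating the finitely many maps $C_{ij}$ (over $i<j$) terminates at a family $\cB$ with $|\cB| = \ell$, $|N(\cB)| \le |N(D)|$, and $C_{ij}(\cB) = \cB$ for all $i<j$; call such a $\cB$ \emph{compressed}. It remains to show $|N(\cB)| \ge |N(S_\ell)|$ for every compressed $\cB$ of size $\ell$.

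For this last step I would induct on $n$ (the cases $n\le 1$ being immediate). Split $\cB$ by the last coordinate: put $\cB_0 = \{A\in\cB : n\notin A\}$ and $\cB_1 = \{A\setminus\{n\} : A\in\cB,\ n\in A\}$, both regarded inside $V(Q_{n-1})$. A short computation gives
\[
|N(\cB)| \;=\; \bigl|N_{n-1}(\cB_0)\cup\cB_1\bigr| \;+\; \bigl|N_{n-1}(\cB_1)\cup\cB_0\bigr|,
\]
and both $\cB_0$ and $\cB_1$ inherit compressedness in $Q_{n-1}$, while stability under the compressions $C_{kn}$ ($k<n$) forces every set obtained from a member of $\cB_1$ by adding a smaller element to lie in $\cB_0$. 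Applying the inductive hypothesis to $\cB_0$ and $\cB_1$ to replace them by initial segments of $<_H$ of the same sizes (checking that neither term above increases and that these constraints are preserved), one identifies the resulting family with $S_\ell$ using the way $<_H$ interleaves the two halves of $Q_n$. An alternative finish that avoids induction is to pick, among all compressed minimisers, one minimising $\sum_{A\in\cB}\mathrm{rank}_{<_H}(A)$, and to show that if such a $\cB$ were not an initial segment then the exchange $(\cB\setminus\{B\})\cup\{A\}$ — with $B$ the $<_H$-largest member and $A$ the $<_H$-smallest non-member — does not increase $|N(\cdot)|$, contradicting minimality.

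The main obstacle is exactly this last step: being compressed does not by itself imply being equal to $S_\ell$ (there are compressed families that are not initial segments), so a genuinely extra argument is needed, and the real work lies in verifying that the relevant move — the inductive replacement of the two halves, or the rank-decreasing exchange — never increases the closed neighbourhood. The local case analysis proving the compression lemma $N(C_{ij}(\cA))\subseteq C_{ij}(N(\cA))$ is the next most delicate point; the reduction to compressed families, the termination argument, and the split by the last coordinate are routine bookkeeping.
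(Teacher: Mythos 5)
First, a point of reference: the paper does not prove this statement at all --- it is quoted as Harper's theorem with a citation to \cite{HARP} --- so there is no in-paper argument to compare yours against; you are reproving a classical theorem from scratch, and your proposal has to be judged on whether it would actually close.

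The machinery you set up is sound as far as it goes. Writing $N(\cA)=\cA\cup\Gamma(\cA)$, the compression lemma $N(C_{ij}(\cA))\subseteq C_{ij}(N(\cA))$ for the $ij$-compressions is correct (the four-case local check at $x$ and $x\triangle\{i,j\}$ does go through), the potential function $\sum_{A\in\cA}\sum_{k\in A}k$ forces termination, and the sectioning identity $|N(\cB)|=|N_{n-1}(\cB_0)\cup\cB_1|+|N_{n-1}(\cB_1)\cup\cB_0|$ is right. The genuine gap is exactly where you flag it, but it is larger than your sketch suggests. Stability under all $C_{ij}$ (left-compressedness) is a weak condition, and your proposed finish --- replace $\cB_0$ and $\cB_1$ by initial segments of the same sizes and then ``identify the resulting family with $S_\ell$'' --- cannot work as stated, because the split $|\cB_0|+|\cB_1|=\ell$ is not controlled: already in $Q_3$ with $\ell=4$, both $S_4=\{\emptyset,\{1\},\{2\},\{3\}\}$ (split $3+1$) and the left-compressed subcube $\{\emptyset,\{1\},\{2\},\{1,2\}\}$ (split $4+0$) occur, so after the replacement you obtain some direction-$n$-compressed family, not $S_\ell$, and you have proved nothing about it. What the classical proof actually does is iterate the codimension-one compression in \emph{every} direction $i$ (replacing both $i$-sections by initial segments --- a different and stronger operation than your $C_{ij}$, whose termination needs its own potential function), and then prove the structural lemma that a family stable under all $n$ of these is either an initial segment of $<_H$ or belongs to a short explicit list of exceptional families, each of which must be checked by hand against the isoperimetric bound. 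That characterisation-plus-exceptions analysis is the substance of Harper's theorem and is absent from your write-up; your alternative ``rank-minimising exchange'' finish faces the same problem, since showing that swapping the $<_H$-largest member for the $<_H$-smallest non-member does not increase $|N(\cdot)|$ is essentially the theorem itself.
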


An application of this theorem shows that for $A \subset V(Q_n)$ with $|A| \le n$,
	\begin{align}
		|\Gamma(A)\cup A| &\ge 1 + n + {n \choose 2} - {n- (|A|-1) \choose 2} \nonumber \\
		&= 1+ n + \binom{n}{2} - \left(\binom{n-|A|}{2}+n-|A|\right) \nonumber \\
		&= 1 + |A| + \binom{n}{2}-\binom{n-|A|}{2}. \nonumber
	\end{align}
Then, since $|\Gamma(A)| \ge |\Gamma(A) \cup A| - |A|$, we see that
	\begin{align}
		|\Gamma(A)| \ge {n \choose 2} - {n - |A| \choose 2}. \label{harperuse}
	\end{align}
The following result is a simple corollary of Theorem \ref{harpermod2}.

\begin{cor}\label{harper}
Let $r \ge 2$ and let $s(n)$ be a function with $s(n) \rightarrow \infty$ and $s(n) = o(n)$ as $n \rightarrow \infty$. Then there exists a constant $C>0$ such that the following holds: If $A \subseteq V(Q_n)$ is such that $|\Gamma(A)| \le \binom{n}{r} + n^{r-1} s(n)$, then $|A| \le \binom{n}{r-1} + Cn^{r-2}s(n)$.
\end{cor}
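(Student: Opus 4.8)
The plan is to prove the contrapositive: assuming $|A| = \ell > \binom{n}{r-1} + C n^{r-2} s(n)$ for a constant $C = C(r)$ to be chosen, I will show $|\Gamma(A)| > \binom{n}{r} + n^{r-1} s(n)$ (for $n$ large, which is all that is needed). The engine is Harper's theorem (Theorem~\ref{harpermod2}): among all vertex sets of size $\ell$, an initial segment $S_\ell$ of the Harper order minimises $|\Gamma(\cdot)\cup(\cdot)|$, so $|\Gamma(A)| \ge |\Gamma(A)\cup A| - \ell \ge |\Gamma(S_\ell)\cup S_\ell| - \ell$. Since the Harper order fills the cube level by level, writing $B_j := \sum_{i=0}^{j}\binom{n}{i}$ and taking $k$ with $B_{k-1} \le \ell < B_k$, the set $S_\ell$ consists of \emph{all} sets of size at most $k-1$ together with a family $\cF$ of $m := \ell - B_{k-1}$ sets of size $k$. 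One checks that $\Gamma(S_\ell)\cup S_\ell$ is then exactly (all sets of size $\le k$) together with the upper shadow $\partial^+\cF$ of $\cF$ into level $k+1$, so that $|\Gamma(S_\ell)\cup S_\ell| = B_k + |\partial^+\cF|$.

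Next I bound the upper shadow from below by double counting the edges between $\cF$ and $\partial^+\cF$: each $k$-set has $n-k$ supersets of size $k+1$, and each $(k+1)$-set has only $k+1$ subsets of size $k$, so $|\partial^+\cF| \ge (n-k)m/(k+1)$. Substituting, $|\Gamma(A)| \ge B_k + (n-k)m/(k+1) - B_{k-1} - m = \binom{n}{k} + \tfrac{n-2k-1}{k+1}\,m$. Before using this I must control $k$. The bound $k \ge r$ is immediate, since $\ell > \binom{n}{r-1} + C n^{r-2}s(n) \ge B_{r-1}$ for $n$ large, because the gap $B_{r-1} - \binom{n}{r-1} = B_{r-2}$ is only $O(n^{r-2}) = o(n^{r-2}s(n))$. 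For an upper bound I use the crude inclusion $A \subseteq \Gamma(\Gamma(A))$, which gives $\ell \le n\,|\Gamma(A)| \le n\bigl(\binom{n}{r} + n^{r-1}s(n)\bigr) = O(n^{r+1})$; since $B_{r+2} = \Theta(n^{r+2})$ this forces $k \le r+1$. Hence $k \in \{r, r+1\}$.

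It remains to finish the two cases. If $k = r+1$, then $|\Gamma(A)| \ge \binom{n}{r+1}$ (the extra term $\tfrac{n-2k-1}{k+1}m$ is nonnegative for $n$ large), and $\binom{n}{r+1} - \binom{n}{r} = \binom{n}{r}\cdot\tfrac{n-2r-1}{r+1} \gg n^{r-1}s(n)$ since $s(n) = o(n)$, so $|\Gamma(A)| > \binom{n}{r} + n^{r-1}s(n)$. If $k = r$, then $m = \ell - B_{r-1} > \binom{n}{r-1} + Cn^{r-2}s(n) - B_{r-1} \ge \tfrac{C}{2}n^{r-2}s(n)$ for $n$ large, while $\tfrac{n-2r-1}{r+1} \ge \tfrac{n}{2(r+1)}$, so $|\Gamma(A)| \ge \binom{n}{r} + \tfrac{C}{4(r+1)}\,n^{r-1}s(n)$, which exceeds $\binom{n}{r} + n^{r-1}s(n)$ once $C \ge 8(r+1)$. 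Taking $C = 8(r+1)$ completes the argument.

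I expect the main obstacle to be bookkeeping rather than genuine difficulty: one must correctly locate the "level" $k$ of $|A|$, and in particular notice that Harper's theorem alone does not bound $k$ from above — for a very large set $A$ the coefficient $\tfrac{n-2k-1}{k+1}$ turns negative and the estimate is vacuous — so the separate crude bound $\ell \le n|\Gamma(A)|$ is needed first to confine $k$ to $\{r,r+1\}$. Some care with the $o(\cdot)$ terms is also required to extract strict inequalities and to ensure the final $C$ depends only on $r$.
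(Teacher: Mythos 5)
Your proposal follows essentially the same route as the paper: apply Harper's theorem to pass to an initial segment $S_\ell$, then use the local LYM inequality (your double--counting bound on $|\partial^+\cF|$ is exactly it) to obtain $|\Gamma(A)| \ge \binom{n}{k} + \tfrac{n-2k-1}{k+1}\,m$, which is the identical estimate the paper derives. The one place you do more is that you try to control the ``level'' $k$ explicitly, where the paper silently assumes $|A| \le \sum_{i\le r}\binom{n}{i}$ so that $S'\subseteq [n]^{(r)}$ (this can always be arranged by passing to a subset of $A$ of the threshold size, using monotonicity of $|\Gamma(\cdot)|$).

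Two small glitches in that extra bookkeeping, neither fatal. First, the deduction ``$\ell=O(n^{r+1})$ and $B_{r+2}=\Theta(n^{r+2})$ force $k\le r+1$'' is off by one: since $n\binom{n}{r}\sim (r+1)\binom{n}{r+1}$, the crude bound $\ell\le n|\Gamma(A)|$ allows $\ell>B_{r+1}$, so you only get $k\le r+2$. This is harmless because $k=r+2$ is dispatched exactly like $k=r+1$ (then $|\Gamma(A)|\ge\binom{n}{r+2}\gg\binom{n}{r}+n^{r-1}s(n)$); or, cleaner, replace $A$ by a subset of size $\lceil \binom{n}{r-1}+Cn^{r-2}s(n)\rceil+1$ so that $k=r$ always. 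Second, you describe the argument as the contrapositive but then invoke $|\Gamma(A)|\le \binom{n}{r}+n^{r-1}s(n)$ to bound $\ell$ from above; logically this is a proof by contradiction, not a pure contrapositive. Again easily repaired (``if $|\Gamma(A)|$ already exceeds the target, done; otherwise\ldots''). With those two cosmetic fixes, the proof is correct and matches the paper's.
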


\begin{proof}
Suppose that $A \subset V(Q_n)$ with $|A| > {n \choose r-1} + Cn^{r-2}s(n)$ (where $C>0$ is a constant to be specified later). Let $S$ be the first $|A|$ elements of $V(Q_n)$ according to $<_H$. By Theorem \ref{harpermod2},
	\begin{align*}
		|\Gamma(A)| &\ge |\Gamma(A) \cup A| - |A| \\
		&\ge |\Gamma(S) \cup S| - |S|.
	\end{align*}
The set $S$ may be written as $B_{r-1}(0) \cup S'$, where $S'$ is a subset of $[n]^{(r)}$ with $|S'| = |A| - |B_{r-1}(0)|$. Thus we have $\Gamma(S) \cup S = B_r(0) \cup T$, where $T = \Gamma(S') \cap [n]^{(r+1)}$. By the local LYM inequality (see  \cite[Ex. 13.31(b)]{LYMstuff}), $|T| \ge \frac{n-r}{r+1}|S'|$, and so $|\Gamma(S) \cup S| \ge |B_r(0)| + \frac{n-r}{r+1}|S'|$. Therefore for sufficiently large $n$,
	\begin{align*}
		|\Gamma(A)| &\ge |B_r(0)| + \frac{n-r}{r+1}|S'| - \left(|B_{r-1}(0)| + |S'|\right) \\
		&= {n \choose r} + \frac{n-2r-1}{r+1}|S'|.
	\end{align*}
For sufficiently large $n$, $|S'| = |A|-|B_{r-1}(0)| > (C/2)n^{r-2}s(n)$ and $\tfrac{n-2r-1}{r+1} \ge \tfrac{n}{r+2}$, and so
\[
|\Gamma(A)| > {n \choose r} + \frac{C}{2(r+2)}n^{r-1}s(n).
\]
So we see that if $C \ge 2(r+2)$, then $\Gamma(A) > {n \choose r} + n^{r-1}s(n).$
\end{proof}

We are now ready to prove Theorem \ref{f-1-stability}.

\begin{proof}[Proof of Theorem \ref{f-1-stability}]
Let $s(n)$ be a function with $s(n) \rightarrow \infty$ and $s(n) = o(n)$ as $n \rightarrow \infty$, and suppose that $A \subseteq V(Q_n)$ is such that $|A| = n$ and $|\Gamma(A)| \le \binom{n}{2} + ns(n)$. Let $\varepsilon  = \frac{1}{100}$.

Let $Y_1 = \left\{v \in A : |\Gamma(v) \setminus \Gamma(A\setminus v)| \ge (1+\varepsilon) \frac{n}{2}\right\}$. Note that if $D \subset V(Q_n)$ with $|D| \le n$, then \eqref{harperuse} implies that $|\Gamma(D)| \ge |D|\frac{n-1}{2}$. Applying this to $\Gamma(A \setminus Y_1)$ gives
	\begin{align*}
		|\Gamma(A)| &\ge (1+\varepsilon) \frac{n}{2}|Y_1| + |\Gamma(A \setminus Y_1)| \\
		&\ge (1+\varepsilon) \frac{n}{2}|Y_1| + (n-|Y_1|)\frac{n-1}{2} \\
		&\ge \binom{n}{2} + \frac{\varepsilon}{2}|Y_1|n.
	\end{align*}
Since $|\Gamma(A)| \le \binom{n}{2} + ns(n)$, we see that $|Y_1| \le (2/\varepsilon)s(n)$.

Let $A_1 = A \setminus Y_1$ and consider the graph $G=(V,E)$ where $V = A_1$ and $uv \in E$ iff $u$ and $v$ differ in exactly two co-ordinates. We will write $\Gamma_G(v)$ for the neighbourhood of a vertex $V$ in the graph $G$, and reserve $\Gamma$ for the neighbourhood in $Q_n$. In $Q_n$, any two vertices have 2 common neighbours if they are at distance two, and no common neighbours otherwise. Therefore, for all $v\in A_1$,
	\begin{align}
		|\Gamma(v) \cap \Gamma(A\setminus v)| \le 2(|\Gamma_G(v)| + |Y_1|). \label{small}
	\end{align}
Taking $n$ large enough so that $(2/\varepsilon)s(n) \le \varepsilon n$, \eqref{small} gives
	\begin{align}
		|\Gamma_G(v)| &\ge \frac{1}{2}\left(n - \frac{1+ \varepsilon}{2}n - 2|Y_1|\right) \nonumber \\
		& = \frac{n}{4}\left(2-(1+\varepsilon) - 4\varepsilon\right) \nonumber \\
		& = \frac{1-5\varepsilon}{4}n. \nonumber
	\end{align}

Let $Y_2$ be the set of vertices $v$ in $V(G)$ for which there does not exist another vertex $u \in V(G)$ such that $|\Gamma_G(u) \cap \Gamma_G(v)| \ge \varepsilon n$. Suppose that $|Y_2| \ge 5$. Note that if $n$ is sufficiently large, then
	\begin{align*}
		|\Gamma_G(Y_2)| \ge 5\frac{1-5\varepsilon}{4}n - \binom{5}{2}\varepsilon n > |V(G)|.
	\end{align*}
This is a contradiction and so we see that $|Y_2| \le 4$. Letting $A_2 = A_1 \setminus Y_2$ we have that, for large enough $n$, $|A_2| \ge n - (3 / \varepsilon)s(n)$.

Consider a vertex $u \in A_2$ and a vertex $v$ such that $|\Gamma_G(u) \cap \Gamma_G(v)| \ge \varepsilon n$. Taking $n$ large enough so that $\varepsilon n \ge 7$, we see that $|\Gamma_{Q_n}^2(u) \cap \Gamma_{Q_n}^2(v)| \ge 7$ and so $u$ and $v$ are at distance two in the hypercube. Note that if $x$ is also at distance $2$ from both $u$ and $v$, then $u,v,$ and $x$ have a common neighbour. Letting $\Gamma_{Q_n}(u) \cap \Gamma_{Q_n}(v) = \left\{w_1,w_2\right\}$ we see that $\left\{u,v\right\} \cup \left(\Gamma_G(u)\cap\Gamma_G(v)\right) \subseteq \Gamma_{Q_n}(w_1) \cup \Gamma_{Q_n}(w_2)$. Without loss of generality, we may then assume that the neighbourhood of $w_1$ contains $u, v,$ and at least $\varepsilon n/3$ other vertices in $A_2$.

Let $B$ be the set of vertices in $V(Q_n)$ with at least $\varepsilon n/3$ neighbours in $A_2$. Then each vertex in $A_2$ has a neighbour in $B$. Suppose that $B=\left\{w_1,\ldots,w_k\right\}$, then for $\ell \le k$ we have
	\begin{align}
		|\Gamma(\left\{w_1,\ldots,w_\ell\right\})\cap A_2| &\ge \sum_{i \in [\ell]} |\Gamma(w_i)| - \sum_{i\neq j}|\Gamma(w_i)\cap\Gamma(w_j)| \nonumber \\
		&\ge \ell\varepsilon n/3 - {\ell \choose 2} 2 \nonumber \\
		&= \ell\left(\varepsilon n/3 - (\ell-1)\right). \nonumber
	\end{align}
So if $\ell = \lceil 6/ \varepsilon \rceil$, then we have $|\Gamma(\left\{w_1,\ldots,w_\ell\right\})\cap A_2| > n$ for sufficiently large $n$. This is a contradiction since $|A_2| \le n$, and so $k \le 6/ \varepsilon$.

Reorder the $w_i$ so that $w_1$ has the largest neighbourhood in $A_2$. Recursively for $i = 1,\ldots, k$, let $C_i = \Gamma(w_i) \cap A_2 \setminus \bigcup_{j<i} \Gamma(w_j)$. Then since the $C_i$ partition $A_2,$
	\begin{align}
		|\Gamma(A_2)| \ge |\Gamma(C_1)| + |\Gamma(A_2 \setminus C_1)| - \sum_{i =2}^k |\Gamma(C_1) \cap \Gamma(C_i)|. \label{shadow5}
	\end{align}

For $i=2,\ldots,k$, split $C_1$ into $D_i = C_1 \cap \Gamma(w_i)$ and $F_i = C_1 \setminus D_i$. Then $|D_i| \le 2$ for each $i$ and so
	\begin{align}
		\sum_{i =2}^k|\Gamma(D_i)\cap\Gamma(C_i)| \le \sum_{i=2}^k \left|\Gamma(D_i)\right| \le 2kn. \label{shadow4}
	\end{align}

Now consider $|\Gamma(C_i) \cap \Gamma(F_i)| \le 2|\{(u,v) \in C_i \times F_i : u,v$ differ in $2$ co-ordinates$\}|$. If $w_i$ and $w_1$ are at distance at least $5$ from each other, then there can be no $u \in C_i$ and $v \in F_i$ at distance two from each other. The same is true if $w_i$ and $w_1$ are an odd distance from one another (the hypercube is bipartite). Therefore we need only consider the cases when $w_i$ and $w_1$ are distance $2$ or $4$ from each other.

First suppose that $w_i$ and $w_1$ are at distance $2$ from each other and recenter the hypercube so that $w_i = 0$ and $w_1 = e_1+e_2$. If $e_1 \in A_2$, then $e_1 \in C_1 \cap \Gamma(w_i)$ and so $e_1 \not\in F_i$ and $e_1 \notin C_i$. On the other hand, if $e_1 \not\in A_2$, then $e_1$ is not in any $C_j$ and so can be in neither $C_i$ nor $F_i$. The same is true for $e_2$ and so $e_1,e_2 \not\in C_i \cup F_i$. Suppose that $C_i = \left\{e_t: t \in T_i\right\}$ where $|T_i|=|C_i|$ (recall that $w_i=0$). Then for each element $e_t$, the only possible vertex in $F_i$ at distance two from $e_i$ is $e_1+e_2+e_t$. Therefore, $|\{(u,v) \in C_i \times F_i : u,v$ differ in $2$ co-ordinates$\}| \le |C_i|$ and so $|\Gamma(C_i)\cap\Gamma(F_i)| \le 2|C_i|$.

If $w_i$ and $w_1$ are at distance $4$ from each other then $|\Gamma(C_i)\cap\Gamma(F_i)| \le \min\left\{6,3|C_i|\right\}$. In both cases 
	\begin{align}
		|\Gamma(C_i)\cap\Gamma(F_i)| \le 3|C_i|. \label{shadow6}
	\end{align}

Putting \eqref{shadow4} and \eqref{shadow6} into \eqref{shadow5}, and using \eqref{harperuse} gives
	\begin{align}
		|\Gamma\left(A_2\right)| &\ge |\Gamma\left(C_1\right)| + |\Gamma\left(A_2 \setminus C_1\right)| - \sum_{i =2}^k \left(|\Gamma\left(C_i\right) \cap \Gamma\left(D_i\right)| + |\Gamma\left(C_i\right) \cap \Gamma\left(F_i\right)| \right) \nonumber \\
		&\ge {n \choose 2} - {n- |C_1| \choose 2} + {n \choose 2} - {n-\left|A_2 \setminus C_1\right|\choose 2} - 2kn -3n \nonumber \\
		&= \frac{2n^2 - \left(n- |C_1|\right)^2 - \left(n-\left(|A_2 \setminus C_1|\right)\right)^2}{2} + O\left(n\right) \nonumber \\
		&= \frac{2n\left(|C_1|+|A_2\setminus C_1|\right) - |C_1|^2 - |A_2\setminus C_1|^2}{2} +O\left(n\right). \nonumber \\
		&\geq \frac{2n|A_2|- |C_1|^2 - |A_2\setminus C_1|^2}{2} +O\left(n\right). \nonumber
	\end{align}

Recall that $n- \left(3/\varepsilon\right)s\left(n\right) \leq |A_2| \leq n$. Therefore
	\begin{align}
		|\Gamma\left(A_2\right)| &\ge n^2 - \frac{|C_1|^2 + \left(n-|C_1|\right)^2}{2} + O\left(ns\left(n\right)\right) \nonumber \\
		&= {n \choose 2} + |C_1|\left(n-|C_1|\right) + O\left(ns\left(n\right)\right). \nonumber
	\end{align}

Since $|C_1| \ge \varepsilon n/3$, we obtain
	\begin{align}
		|\Gamma\left(A_2\right)| \ge {n \choose 2} + \varepsilon n/3\left(n-|C_1|\right) + O\left(ns\left(n\right)\right). \nonumber
	\end{align}

We started off with the assumption that $|\Gamma(A)| \le \frac{n^2}{2} + ns(n)$ and so we see that $n-|C_1| = O(s(n))$. Finally recall that $C_1 = \Gamma(w_1) \cap A_2 \subseteq \Gamma(w_1)\cap A$ and so we are done.
\end{proof}

An application of Corollary \ref{harper} gives the following corollaries which will later be used in conjunction with Lemma \ref{scott}.

\begin{cor}\label{use-f-1}
Let $s(n)$ be a function with $s(n) \rightarrow \infty$ and $s(n) = o(n)$ as $n \rightarrow \infty$, and let $r \ge 1$. Then there exists a constant $K = K(s(n),r)>0$ such that if $A \subseteq V(Q_n)$ with $|A| = n$ and $|\Gamma^r(A)| \le \binom{n}{r+1} + n^r s(n)$, then there exists some $w \in V(Q_n)$ for which $|\Gamma(w) \cap A| \ge n - Ks(n)$.
\end{cor}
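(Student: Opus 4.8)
The plan is to reduce the statement to Theorem \ref{f-1-stability} by first using Corollary \ref{harper} to pass from control on $|\Gamma^r(A)|$ to control on the size of the successive shadows (upper neighbourhoods) of $A$, and in particular on the size of $\Gamma(A)$, after which the hypothesis of Theorem \ref{f-1-stability} is met and we conclude directly. The point is that $\Gamma^r(A) = \Gamma(\Gamma^{r-1}(A)) \setminus (\text{lower layers})$, so a bound on $|\Gamma^r(A)|$ bounds the neighbourhood of the set $\Gamma^{r-1}(A)$ (one should be slightly careful: $\Gamma(\Gamma^{r-1}(A))$ also picks up vertices of $\Gamma^{r-2}(A)$, but since $|\Gamma^{r-2}(A)| = O(n^{r-2}) = o(n^{r-1}s(n))$ this is absorbed into the error term). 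Thus $|\Gamma(\Gamma^{r-1}(A))| \le \binom{n}{r+1} + n^r s(n) + O(n^{r-1})$, and Corollary \ref{harper} (applied with the parameter there equal to $r+1$, and with $n s(n)$ in the role of its $s$, say) yields $|\Gamma^{r-1}(A)| \le \binom{n}{r} + C' n^{r-1} s(n)$ for a suitable constant $C'$.

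Next I would iterate: having $|\Gamma^{r-1}(A)| \le \binom{n}{r} + C' n^{r-1} s(n)$, the same observation gives $|\Gamma(\Gamma^{r-2}(A))| \le \binom{n}{r} + C' n^{r-1}s(n) + O(n^{r-2})$, and Corollary \ref{harper} again yields $|\Gamma^{r-2}(A)| \le \binom{n}{r-1} + C'' n^{r-2}s(n)$. Continuing down to level $1$, after $r-1$ applications of Corollary \ref{harper} (each absorbing a bounded multiplicative constant depending only on the level, hence on $r$, and each absorbing the lower-layer error which is of strictly smaller order), we arrive at $|\Gamma(A)| = |\Gamma^1(A)| \le \binom{n}{2} + C^{*} n s(n)$ for some constant $C^{*} = C^{*}(r)$. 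Since $C^{*} s(n)$ is again a function tending to infinity and of order $o(n)$, Theorem \ref{f-1-stability} applies (with $C^{*} s(n)$ in place of its $s(n)$) and produces a vertex $w$ with $|\Gamma(w) \cap A| \ge n - C \cdot C^{*} s(n)$. Setting $K = C \cdot C^{*}$ gives the claim. In the base case $r = 1$ there is nothing to do beyond invoking Theorem \ref{f-1-stability} directly.

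The only genuinely delicate point is bookkeeping: one must check that at each level the ``lower layer'' corrections ($|\Gamma^{r-2}(A)|$, etc.) and the accumulated multiplicative constants from Corollary \ref{harper} remain lower-order relative to the main error term $n^{j}s(n)$ at level $j$, so that the hypotheses of Corollary \ref{harper} and then of Theorem \ref{f-1-stability} are literally satisfied for $n$ large; since $s(n) \to \infty$, any fixed constant is eventually dominated, and since there are only $r$ (a constant) steps, the product of constants stays bounded. No new idea is needed beyond this telescoping of shadow estimates, so this step — verifying the error terms telescope cleanly — is the main (and only) obstacle, and it is routine.
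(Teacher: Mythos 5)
Your proposal is correct and follows essentially the same route as the paper's own proof, which carries out the same telescoping via a formal induction on $r$ (one application of Corollary \ref{harper} per step, and Theorem \ref{f-1-stability} as the base case). You additionally flag the lower-layer correction when passing from $|\Gamma^r(A)|$ to $|\Gamma(\Gamma^{r-1}(A))|$, which the paper elides; note, though, that this correction can be as large as $O(n^{r})$ (when $A$ meets both parity classes, $\Gamma(\Gamma^{r-1}(A))$ can also meet $\Gamma^{r-1}(A)$ itself), not the $O(n^{r-2})$ or $O(n^{r-1})$ you state --- but since $O(n^r)=o(n^r s(n))$ this is harmless.
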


\begin{proof}
We will prove this result by induction on $r$. The base case $r=1$ is just Theorem \ref{f-1-stability} and so we just need to prove the inductive step. Let $s(n)$ be a function with $s(n) \rightarrow \infty$ and $s(n) = o(n)$ as $n \rightarrow \infty$, and let $r > 1$, and suppose $|A| = n$ and $|\Gamma^r(A)| \le \binom{n}{r+1} + n^r s(n)$. Then we may apply Corollary \ref{harper} to $\Gamma^{r-1}(A)$ to see that there is a constant $C$ with $|\Gamma^{r-1}(A)| \le {n \choose r} + Cn^{r-1}s(n)$. The result then follows by the inductive hypothesis.
\end{proof}

\begin{cor}\label{ftoj}
Let $r \ge 1$, and let $s(n)$ be a function with $s(n) \rightarrow \infty$ and $s(n) = o(n)$ as $n \rightarrow \infty$. Then there exists a constant $K>0$ such that any bijection $f: V(Q_n)\rightarrow V(Q_n)$ such that $|\Gamma^r(f(\Gamma(v)))| \le {n \choose r+1} + n^rs(n)$ for all $v \in V(Q_n)$ is $Ks(n)$-approximately local.
\end{cor}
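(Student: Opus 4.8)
The plan is to deduce this directly from Corollary \ref{use-f-1}, which does all the real work; the present statement is essentially a translation into the language of approximately local bijections, and there is no substantive obstacle left once Theorem \ref{f-1-stability} (and hence Corollary \ref{use-f-1}) is in hand.

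First I would fix a bijection $f : V(Q_n) \to V(Q_n)$ satisfying $|\Gamma^r(f(\Gamma(v)))| \le \binom{n}{r+1} + n^r s(n)$ for every $v \in V(Q_n)$, and for each vertex $v$ set $A_v := f(\Gamma(v))$. Since $Q_n$ is $n$-regular we have $|\Gamma(v)| = n$, and since $f$ is a bijection it follows that $|A_v| = n$ exactly; this is the one small point worth checking, as Corollary \ref{use-f-1} is stated only for sets of size precisely $n$. By the hypothesis on $f$ we have $|\Gamma^r(A_v)| \le \binom{n}{r+1} + n^r s(n)$, so $A_v$ meets the hypotheses of Corollary \ref{use-f-1}. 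Let $K = K(s(n),r) > 0$ be the constant supplied by that corollary; crucially it depends only on $s(n)$ and $r$, not on $f$ or on $v$. Then there is a vertex $w \in V(Q_n)$ with $|\Gamma(w) \cap A_v| \ge n - Ks(n)$, and I would define $g(v) := w$.

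Carrying this out for every $v \in V(Q_n)$ produces a function $g : V(Q_n) \to V(Q_n)$ with $|f(\Gamma(v)) \cap \Gamma(g(v))| \ge n - Ks(n)$ for all $v$, which is exactly the statement that $f$ is $Ks(n)$-approximately local, with $g$ as a dual (cf. Definition \ref{approxdefn}). Since $s(n) = o(n)$, for all sufficiently large $n$ we have $Ks(n) < n/2$, so by Definition \ref{approxdefn} the dual is unique and $f_{\star} = g$. The only thing to be careful about is the exact size constraint $|A_v| = n$; everything else — namely the isoperimetric stability — has already been established in Theorem \ref{f-1-stability} and propagated through Corollary \ref{harper} and Corollary \ref{use-f-1}, so no further work is required.
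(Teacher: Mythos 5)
Your proof is correct and follows essentially the same route as the paper's: apply Corollary \ref{use-f-1} to $A_v = f(\Gamma(v))$ (a set of size $n$, since $f$ is a bijection and $Q_n$ is $n$-regular), obtaining for each $v$ a vertex $g(v)$ with $|\Gamma(g(v)) \cap f(\Gamma(v))| \ge n - Ks(n)$, which is exactly Definition \ref{approxdefn}. Your closing remark about uniqueness of the dual when $Ks(n) < n/2$ is correct but goes beyond what the corollary asserts; the paper simply stops once $g$ is exhibited as a dual.
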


\begin{proof}
Let $r \ge 1$, and let $s(n)$ be a function with $s(n) \rightarrow \infty$ and $s(n) = o(n)$ as $n \rightarrow \infty$. Suppose that $|\Gamma^r(f(\Gamma(v)))| \le \binom{n}{r+1} + n^r s(n)$ for each vertex $v \in V(Q_n)$. By Corollary \ref{use-f-1}, there exists a constant $K>0$ such that for all $v\in V(Q_n)$, there exists a $g(v) \in V(Q_n)$ such that $|\Gamma(g(v)) \cap f(\Gamma(v))| \ge n - Ks(n)$. Then $g$ is the dual of $f$ realising that $f \in \mathrm{Local}_{Ks(n)}$.
\end{proof}

While Corollary \ref{ftoj} is needed for our proof of Theorem \ref{main}, it is not enough for Theorem \ref{1ball} where we will need to allow $s(n) = \Theta(n)$. It would be helpful to have a result similar to Theorem \ref{f-1-stability} in this case. Here, we prove a result with the added condition that the set $A$ does not cluster too much around two different vertices.

\begin{lem}\label{1-stab}
Let $t(n)\ge 5$ be a function with $t(n) \rightarrow \infty$ and $t(n) = o(n)$ as $n \rightarrow \infty$, and let $s(n)$ be a function on the natural numbers such that $1-2s(n)n^{-1} - 14\sqrt{t(n)/n} \ge 0$. Suppose that $A \subseteq V(Q_n)$ with $|A|=n$ and $|\Gamma(A)| \le \binom{n}{2} + s(n)n$, and suppose there do not exist distinct $w_1,w_2 \in V(Q_n)$ such that $|A\cap \Gamma(w_i)| > t(n)$ for $i=1,2$. Then there exists some $w \in V(Q_n)$ for which
	\begin{align*}
		|\Gamma(w) \cap A| \ge n\left(1-2s(n)n^{-1} - 14\sqrt{t(n)/n}\right)^{\frac{1}{2}}.
	\end{align*}
\end{lem}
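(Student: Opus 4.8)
The statement is a version of Theorem~\ref{f-1-stability} (equivalently Corollary~\ref{use-f-1} with $r=1$) valid in the regime $s(n)=\Theta(n)$, at the cost of assuming that $A$ does not already cluster around \emph{two} vertices. The natural approach is to mimic the proof of Theorem~\ref{f-1-stability} but be more quantitative at every step, since we can no longer afford to discard $O(s(n))$-sized error sets as lower-order terms. I would keep the same skeleton: pass to a ``heavy'' auxiliary graph $G$ on (most of) $A$ where $u\sim v$ iff $u,v$ differ in exactly two coordinates, show most vertices of $A$ have large $G$-degree, show the ``isolated in $G$'' exceptional set is tiny, then cover $A_2$ (the non-exceptional part) by neighbourhoods $\Gamma(w_1),\dots,\Gamma(w_k)$ of a bounded number of centres, and finally run the shadow/Harper computation \eqref{shadow5}--\eqref{harperuse} to conclude $|C_1|=|\Gamma(w_1)\cap A|$ is large.

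The key difference enters in two places. First, where the proof of Theorem~\ref{f-1-stability} used $|Y_1|\le (2/\varepsilon)s(n)$ and similar $O(s(n))$ bounds, here $s(n)n$ may be comparable to $n^2$, so instead I would use the hypothesis ``no two vertices $w_1,w_2$ each capture more than $t(n)$ elements of $A$'' to control the overlap structure: this is exactly what prevents $A$ from splitting into two (or more) dense lumps around distant centres, and it is what forces a \emph{single} good centre $w$ to exist. Concretely, if several centres $w_1,\dots,w_k$ each captured $\gg t(n)$ vertices of $A$, the clustering hypothesis would be violated (for $k\ge 2$) — so at most one centre is ``very heavy'', and the remaining mass of $A$ lies in $\le t(n)$-sized pieces around the others. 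Second, the final inequality $|\Gamma(A_2)|\ge \binom n2 + |C_1|(n-|C_1|)+(\text{error})$ must be pushed through with the error now of order $s(n)n + \sqrt{t(n)n}\cdot n$ rather than $O(ns(n))$ — the $\sqrt{t(n)/n}$ in the conclusion comes precisely from solving $\varepsilon$-type parameters against $t(n)/n$ (balancing $t(n)$ captured vertices against $n$), which is why the bound is $n(1-2s(n)/n-14\sqrt{t(n)/n})^{1/2}$ rather than $n-Cs(n)$.

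Here is the order I would carry it out. \textbf{(1)} Set $\delta = s(n)/n$ and $\gamma = \sqrt{t(n)/n}$; let $Y_1=\{v\in A: |\Gamma(v)\setminus\Gamma(A\setminus v)|\ge (1+\eta)n/2\}$ for a suitable $\eta$ of order $\gamma$, and use \eqref{harperuse} (the bound $|\Gamma(D)|\ge |D|(n-1)/2$) to get $|Y_1|=O(\delta n/\eta)$, which is $O(\delta n/\gamma)$. \textbf{(2)} On $A_1=A\setminus Y_1$ form $G$; via the two-common-neighbours fact \eqref{small} each $v\in A_1$ has $|\Gamma_G(v)|\gtrsim n/2 - O(\eta n + |Y_1|)$. \textbf{(3)} Let $Y_2$ be the $G$-vertices with no other vertex sharing $\ge \varepsilon' n$ common $G$-neighbours; bound $|Y_2|$ by a constant as in the original proof, so $A_2=A_1\setminus Y_2$ still has $|A_2|\ge n - O(\delta n/\gamma)$. \textbf{(4)} Each $v\in A_2$ lies in $\Gamma(w)$ for some $w$ capturing $\gtrsim \varepsilon' n/3$ vertices of $A_2$; let $B=\{w_1,\dots,w_k\}$ be all such centres. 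Bound $k$ by a constant (overlap $\le 2$ between $\Gamma(w_i)$'s), and \emph{now} invoke the clustering hypothesis: at most one $w_i$ captures more than $t(n)$ vertices of $A$, so all but one of the $C_i=\Gamma(w_i)\cap A_2\setminus\bigcup_{j<i}\Gamma(w_j)$ have size $\le t(n)$, hence $|A_2\setminus C_1|\le (k-1)t(n) = O(t(n))$ after ordering $w_1$ to be the heavy one. \textbf{(5)} Run \eqref{shadow5}: with $|D_i|\le 2$ and $|\Gamma(C_i)\cap\Gamma(F_i)|\le 3|C_i|$ as in the original, plus \eqref{harperuse}, obtain $|\Gamma(A_2)|\ge \binom n2 + |C_1|(|A_2|-|C_1|) - O(kn) - O(kt(n)n/\ldots)$; since $|\Gamma(A)|\le \binom n2 + \delta n^2$ and $|A\setminus A_2|, |A_2\setminus C_1|$ are small, rearrange to get $|C_1|(n-|C_1|) \le \delta n^2 + O(\gamma^2 n^2)$, i.e. $(n-|C_1|)/n \le \big(\delta + O(\gamma^2)\big) \cdot n/|C_1|$, and since $|C_1|\ge \varepsilon' n/3$ this gives $|C_1|\ge n(1 - 2\delta - 14\gamma)^{1/2}$ after tracking the explicit constants (this is where the $2$ and the $14$ are pinned down). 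Take $w=w_1$.

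\textbf{Main obstacle.} The delicate point is Step (4)–(5): making the bookkeeping of the overlaps $|\Gamma(C_i)\cap\Gamma(C_j)|$ and the exceptional sets $Y_1,Y_2,A\setminus A_2$ precise enough that the accumulated additive error stays at $O((\delta+\gamma^2)n^2)$ — in the original proof these were swallowed as $O(ns(n))$, but here $s(n)$ is large so one must instead show each error term is genuinely of order $t(n)\cdot n$ or $\gamma^2 n^2$, using the clustering hypothesis to bound the non-$C_1$ pieces by $t(n)$. Getting the numerical constants ($2s(n)/n$ and $14\sqrt{t(n)/n}$) exactly right is the only genuinely fiddly part; the hypothesis $1-2s(n)n^{-1}-14\sqrt{t(n)/n}\ge 0$ is exactly the condition that makes the final square-root bound non-vacuous, so presumably the constants are chosen so the arithmetic closes.
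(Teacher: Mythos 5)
Your proposal follows the template of Theorem \ref{f-1-stability}, but the paper's actual proof takes a structurally different and much shorter route, and I believe your version has a genuine gap in the regime the lemma is actually needed. The difficulty is Step~(1). In Theorem \ref{f-1-stability}, removing $Y_1 = \{v \in A : |\Gamma(v)\setminus\Gamma(A\setminus v)| \ge (1+\varepsilon)n/2\}$ costs $|Y_1| \le (2/\varepsilon)s(n) = o(n)$ because there $\varepsilon$ is a fixed constant and $s(n)=o(n)$. You propose $\eta\sim\gamma=\sqrt{t(n)/n}$, which gives $|Y_1|\lesssim \delta n/\gamma$ where $\delta = s(n)/n$. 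But in this lemma $\delta$ is allowed to be bounded away from $0$ --- indeed in the application inside Theorem \ref{1ball}, $s(n)=\varepsilon(n)n$ with $\varepsilon(n)\to 1/2$, so $\delta\to 1/2$ while $\gamma\to 0$, and $\delta n/\gamma$ exceeds $n$. The $Y_1$-exclusion then removes more than $n$ vertices, the $G$-degree bound $|\Gamma_G(v)|\gtrsim \tfrac12(n - \tfrac{1+\eta}{2}n - 2|Y_1|)$ goes negative, and the scaffolding of Steps~(2)--(5) collapses. The same issue resurfaces in Step~(5): the inequality $|C_1|(n-|C_1|)\le \delta n^2 + \text{(error)}$ is an upper, not lower, bound on the parabola, and when $\delta\ge 1/4$ it gives no lower bound on $|C_1|$ without independent control --- control your argument no longer provides once $|Y_1|$ is uncontrolled.

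The paper sidesteps the $Y_1$ step entirely. It forms $G$ on \emph{all} of $A$ (distance-$2$ graph), sets $A_1$ to be a \emph{largest clique} in $G$ (any clique of size $\ge 5$ lies inside a single $Q_n$-neighbourhood, so $A_1\subseteq\Gamma(w)$ automatically), and uses the two-centre hypothesis only to say that every other clique has size $\le t(n)$. It then defines $A' = \{v\in A\setminus A_1 : \deg_G(v) \ge 3\sqrt{nt(n)}\}$ and shows $|A'| \le \sqrt{n/t(n)}$ by a bounded-codegree argument in $G$ --- crucially, a bound that is \emph{independent of $s(n)$}. Each $v\in A\setminus(A_1\cup A')$ then has $\ge n-6\sqrt{nt(n)}$ private $Q_n$-neighbours, and Harper is applied only once (to $A_1$). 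Summing gives $|\Gamma(A)| \ge \binom n2 + \frac{n^2-|A_1|^2}{2} - 7n^{3/2}t(n)^{1/2}$, which rearranges directly to $|A_1|^2 \ge n^2(1 - 2\delta - 14\gamma)$ --- a lower bound on $|A_1|$ valid for any $\delta < 1/2$. Compared to your approach, the paper buys robustness in $s(n)$ by never attempting to show that most of $A$ has uniformly large private degree; the cost is that it can only conclude $|A_1|\ge n\sqrt{1-2\delta-14\gamma}$ rather than $n - O(s(n))$, which is exactly the weaker conclusion the lemma states.
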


\begin{proof}
Let $G = (A,E)$ where $uv \in E$ if and only if $d(u,v) = 2$. Suppose that $A_1$ is a largest clique in $G$. A clique of size at least 5 in $G$ corresponds to a collection of vertices in $A$ in the $Q_n$-neighbourhood of a single vertex. Then by assumption all cliques other than $A_1$ have size at most $t(n)$. Let $A'=\left\{v \in A \setminus A_1 : \deg_G(v) \ge 3\sqrt{nt(n)}\right\}$.

Suppose there exist distinct $u,v \in A'$ with $|\Gamma_G(u) \cap \Gamma_G(v)| \ge 2t(n)$. Then $t(n) \ge 5$, and so $|\Gamma_G(u) \cap \Gamma_G(v)| \ge 10$, which corresponds to there being at least $10$ vertices at distance $2$ from both $u$ and $v$ in $Q_n$. This is only possible if $u$ and $v$ are at distance $2$ in $Q_n$. Without loss of generality assume that $u = \emptyset$ and $v = \{1,2\}$. Then every vertex $x \in \Gamma_G(u) \cap \Gamma_G(v)$ contains two elements, exactly one of which is $1$ or $2$. So then $x$ is a neighbour of either $1$ or $2$ in $Q_n$, and by the pigeonhole principle one of $1$ and $2$ (without loss of generality assume $1$) has at least $t(n)$ $Q_n$-neighbours in $A$. But then these $Q_n$-neighbours of $1$ plus $u$ and $v$ form a clique in $G$ of size at least $t(n)+2$. This cannot be since there is no clique of size $t(n)+2$ not entirely contained in $A_1$.

Therefore $|\Gamma_G(u) \cap \Gamma_G(v)| < 2t(n)$ for each $u,v \in A'$. But now, for any $Y \subseteq A'$, we have
	\begin{align*}
		|\Gamma_G(Y)| &\ge \sum_{v \in Y} \deg_G(v) - \sum_{v\neq w \in Y} |\Gamma_G(v) \cap \Gamma_G(w)| \\
		&\ge 3\sqrt{nt(n)}|Y| - t(n)|Y|^2.
	\end{align*}
So we see that if $|Y| = \left\lceil \sqrt{n/t(n)} \right\rceil$, then we have $|\Gamma_G(Y)| > n$. This gives a contradiction, so we must have $|A'| \le \sqrt{n/t(n)}$.

Note that if $v \in A \setminus (A_1 \cup A')$, then $|\Gamma(v)\setminus \Gamma(A \setminus \left\{v\right\})| \ge n - 2\deg_G(v) \ge n- 6\sqrt{nt(n)}$. We can now give a lower bound for $|\Gamma(A)|$ in terms of $t(n)$ and $|A_1|$ by applying \eqref{harperuse}. Indeed,
	\begin{align*}
		|\Gamma(A)| &\ge |\Gamma(A_1)| + \sum_{v \in A \setminus (A_1 \cup A')} |\Gamma(v)\setminus \Gamma(A \setminus \left\{v\right\})| \\
		&\ge \binom{n}{2} - \binom{n-|A_1|}{2} + \left(n-|A_1|-|A'|\right)\left(n-6\sqrt{nt(n)}\right) \\
		&\ge \binom{n}{2} - \frac{\left(n-\left|A_1\right|\right)^2}{2} + \left(n-|A_1|-\sqrt{n/t(n)}\right)\left(n-6\sqrt{nt(n)}\right) \\
		&\ge \binom{n}{2} - \frac{n^2 - 2n|A_1| + |A_1|^2}{2} + n^2 - n|A_1| -7n^{\frac{3}{2}}t(n)^{\frac{1}{2}} \\
		&= \binom{n}{2} + \frac{n^2 - |A_1|^2}{2} -7n^{\frac{3}{2}}t(n)^{\frac{1}{2}}.
	\end{align*}
Recall that $|\Gamma(A)| \le \binom{n}{2} + s(n)n$, and so
	\begin{align*}
		\frac{n^2 - |A_1|^2}{2} - 7n^{\frac{3}{2}}t(n)^{\frac{1}{2}} \le s(n)n.
	\end{align*}
Rearranging this gives
	\begin{align*}
		|A_1|^2 \ge n^2\left(1-2s(n)n^{-1} - 14\left(\frac{t(n)}{n}\right)^{\frac{1}{2}}\right),
	\end{align*}
and we are done by taking square roots.
\end{proof}

\begin{defn}\label{monodef}
Define $\mathrm{Mono}_s^t$ (where $s$ and $t$ may depend on $n$) as the set of bijections $f \in \mathrm{Cluster}^1_s$ for which, for all $v \in V(Q_n)$, there exists at most one vertex $w \in V(Q_n)$ such that $|f(\Gamma(v))\cap \Gamma(w)| > t$.
\end{defn}

We then have the following direct corollary of Lemma \ref{1-stab}.

\begin{cor}\label{1ftoj}
Let $t(n)\ge 5$ be a function with $t(n) \rightarrow \infty$ and $t(n) = o(n)$ as $n \rightarrow \infty$, and let $s(n)$ be a function on the natural numbers such that $1-2s(n)n^{-1} - 14\sqrt{t(n)/n} \ge 0$. Then $\mathrm{Mono}_{s(n)n}^{t(n)} \subseteq \mathrm{Local}_{\alpha(n)n}$ where
	\begin{align*}
		\alpha(n) = 1-\left(1-2s(n)n^{-1} - 14\sqrt{t(n)/n}\right)^{\frac{1}{2}}.
	\end{align*}
Further, if $\alpha(n)n < n - t(n)$, then a function $f \in \mathrm{Mono}_{s(n)n}^{t(n)}$ has at most one dual.
\end{cor}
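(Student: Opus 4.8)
The plan is to feed each image set $f(\Gamma(v))$ into Lemma~\ref{1-stab} and then read off the uniqueness of the dual directly from the defining property of $\mathrm{Mono}_{s(n)n}^{t(n)}$. There is no genuinely new difficulty here: all the work sits in Lemma~\ref{1-stab}, and this corollary is an unpacking of the relevant definitions.

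First I would fix $f \in \mathrm{Mono}_{s(n)n}^{t(n)}$ and an arbitrary vertex $v \in V(Q_n)$, and set $A := f(\Gamma(v))$. Since $f$ is a bijection and $|\Gamma(v)| = n$, we have $|A| = n$. By Definition~\ref{monodef} we have $f \in \mathrm{Cluster}^1_{s(n)n}$, so $|\Gamma(A)| = |\Gamma(f(\Gamma(v)))| \le \binom{n}{2} + s(n)n$, which is the first hypothesis of Lemma~\ref{1-stab}. The $\mathrm{Mono}$ condition says there is at most one vertex $w$ with $|A \cap \Gamma(w)| > t(n)$; in particular there do not exist \emph{distinct} $w_1, w_2$ with $|A \cap \Gamma(w_i)| > t(n)$ for both $i = 1, 2$, which is the second hypothesis. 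The conditions imposed on $t(n)$ and $s(n)$ in the statement are exactly those required by Lemma~\ref{1-stab}, so the lemma produces a vertex, which we name $g(v)$, with
\[
	|\Gamma(g(v)) \cap f(\Gamma(v))| \ge n\left(1 - 2s(n)n^{-1} - 14\sqrt{t(n)/n}\right)^{1/2} = n - \alpha(n)n ,
\]
the equality being the definition of $\alpha(n)$. As $v$ was arbitrary, $g$ is a dual of $f$ at scale $\alpha(n)n$, so $f \in \mathrm{Local}_{\alpha(n)n}$ by Definition~\ref{approxdefn}.

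For the last sentence, assume $\alpha(n)n < n - t(n)$ and let $g$ be any dual of $f$ at scale $\alpha(n)n$; then for every $v$ we have $|f(\Gamma(v)) \cap \Gamma(g(v))| \ge n - \alpha(n)n > t(n)$. If $f$ had two distinct duals $g_1 \ne g_2$, choose $v$ with $g_1(v) \ne g_2(v)$: then $g_1(v)$ and $g_2(v)$ would be two distinct vertices $w$ with $|f(\Gamma(v)) \cap \Gamma(w)| > t(n)$, contradicting the $\mathrm{Mono}$ property. Hence the dual is unique. The only points requiring mild care are the bookkeeping check that $n(1 - 2s(n)n^{-1} - 14\sqrt{t(n)/n})^{1/2}$ is exactly $n - \alpha(n)n$, and matching the non-strict inequality $\ge n - s$ in the definition of $s$-approximate locality; I do not expect either to be an obstacle, since the entire content is already contained in Lemma~\ref{1-stab}.
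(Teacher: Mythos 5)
Your proof is correct and is exactly the unpacking the paper intends: the paper states this as a ``direct corollary'' of Lemma~\ref{1-stab} with no further proof, and your argument---applying Lemma~\ref{1-stab} to $A=f(\Gamma(v))$ using the $\mathrm{Cluster}$ and $\mathrm{Mono}$ hypotheses, then observing that two distinct duals would give two vertices $w$ with $|f(\Gamma(v))\cap\Gamma(w)|>t(n)$---is precisely how it follows.
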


The following lemma shows that the inverse of an approximately local bijection is itself approximately local.

\begin{lem}\label{a.l.inverse}
Let $s$ be some natural number. If $f \in \mathrm{Local}_s$ has a bijective dual $g$, then $f^{-1} \in \mathrm{Local}_s$ and $g^{-1}$ is a dual of $f^{-1}$.
\end{lem}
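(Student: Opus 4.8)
The plan is to verify the definition of $s$-approximate locality directly for $f^{-1}$, using $g^{-1}$ as the candidate dual. So I would fix an arbitrary vertex $u \in V(Q_n)$ and set $v := g^{-1}(u)$ (this is where bijectivity of $g$ is used, so that $g^{-1}$ is a well-defined function). Since $f \in \mathrm{Local}_s$ with dual $g$, applying the definition at the vertex $v$ gives
\[
|f(\Gamma(v)) \cap \Gamma(g(v))| \ge n - s,
\]
and $g(v) = u$, so $|f(\Gamma(v)) \cap \Gamma(u)| \ge n-s$.

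Next I would push this intersection through $f^{-1}$. Because $f$ is a bijection, it preserves cardinalities and commutes with intersections: for any sets $X, Y \subseteq V(Q_n)$ we have $f^{-1}(X \cap Y) = f^{-1}(X) \cap f^{-1}(Y)$ and $|f^{-1}(X)| = |X|$, and moreover $f^{-1}(f(\Gamma(v))) = \Gamma(v)$. Applying this with $X = f(\Gamma(v))$ and $Y = \Gamma(u)$ yields
\[
|f^{-1}(\Gamma(u)) \cap \Gamma(v)| = |f^{-1}\bigl(f(\Gamma(v)) \cap \Gamma(u)\bigr)| = |f(\Gamma(v)) \cap \Gamma(u)| \ge n - s.
\]
Since $v = g^{-1}(u)$, this says exactly that $|f^{-1}(\Gamma(u)) \cap \Gamma(g^{-1}(u))| \ge n-s$.

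As $u$ was arbitrary, this shows $f^{-1}$ is $s$-approximately local and that $g^{-1}$ is a dual of $f^{-1}$, i.e.\ $f^{-1} \in \mathrm{Local}_s$, which is what was to be proved. There is no real obstacle here: the only points to be careful about are that one genuinely needs $g$ to be bijective for the statement to make sense, and that one uses the bijectivity of $f$ (not just that it is a map) to get the set-theoretic identities $|f^{-1}(X)| = |X|$ and $f^{-1}(X \cap Y) = f^{-1}(X) \cap f^{-1}(Y)$.
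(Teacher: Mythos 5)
Your proof is correct and follows essentially the same route as the paper's: both arguments apply the defining inequality $|f(\Gamma(v)) \cap \Gamma(g(v))| \ge n-s$ at $v = g^{-1}(u)$ and use bijectivity of $f$ to transfer the bound through $f^{-1}$, differing only in whether one parametrizes by $u$ or by $v = g(u)$.
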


\begin{proof}
Note that for all $w \in V(Q_n)$, $|f(\Gamma(w)) \cap \Gamma(g(w))| \ge n- s$ and so, since $f$ is a bijection, $|\Gamma(w) \cap f^{-1}(\Gamma(g(w)))| \ge n-s$. Now let $v \in V(Q_n)$ and suppose that $v = g(u)$. Then $f^{-1}(\Gamma(v)) = f^{-1}(\Gamma(g(u)))$, and so
	\begin{align*}
		|f^{-1}\left(\Gamma\left(v\right)\right) \cap \Gamma\left(g^{-1}\left(v\right)\right)| = |f^{-1}\left(\Gamma\left(g\left(u\right)\right)\right) \cap \Gamma\left(u\right)| \ge n-s.
	\end{align*}
Since $v$ was an arbitrary vertex of the hypercube, we can conclude that $f^{-1}$ is $s$-approximately local and has $g^{-1}$ as one of its duals.
\end{proof}

We now use Theorem \ref{f-1-stability} to show that $s(n)$-approximately local bijections have $O(s(n))$-approximately local duals.

\begin{lem}\label{g-stability}
Let $s(n)<n/2$ be a function with $s(n) \rightarrow \infty$ and $s(n) = o(n)$ as $n \rightarrow \infty$. Then there exists some constant $K>0$ such that for every $s(n)$-approximately local bijection $f$, the dual $f_{\star}$ is $Ks(n)$-approximately local.
\end{lem}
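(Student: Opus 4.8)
The plan is to show that the dual $g = f_\star$ is itself approximately local by controlling the image under $g$ of each neighbourhood, and then applying Theorem \ref{f-1-stability} (the $k=1$ stability result for Harper's theorem). Fix a vertex $v \in V(Q_n)$ and consider the set $g(\Gamma(v)) = \{g(v+e_1),\dots,g(v+e_n)\}$; this is a set of size $n$, and the goal is to bound $|\Gamma(g(\Gamma(v)))|$ by $\binom{n}{2} + O(n\, s(n))$, which by Theorem \ref{f-1-stability} yields a vertex $w$ with $|\Gamma(w) \cap g(\Gamma(v))| \ge n - C s(n)$, and $w$ can then serve as $g_\star(v)$ — wait, we want $f_\star$ to be approximately local, i.e.\ we want to bound $|\Gamma(g(w)) \cap \Gamma(g(v))|$ for neighbours $w$ of $v$; so really the cleanest route is to argue directly about $g$ applied to a neighbourhood.

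First I would recall the defining property: for each vertex $u$, $|f(\Gamma(u)) \cap \Gamma(g(u))| \ge n - s(n)$. Now fix $v$ and let $u$ range over $\Gamma(v)$. For each such $u$, the set $f(\Gamma(u))$ is contained in $\Gamma(g(u)) \cup (\text{a set of} \le s(n) \text{ extra vertices})$. Since $v \in \Gamma(u)$ for every $u \in \Gamma(v)$, the vertex $f(v)$ lies in $f(\Gamma(u))$, hence $f(v) \in \Gamma(g(u))$ for all but at most $s(n)$ values of $u$; thus for all but at most $s(n)$ neighbours $u$ of $v$, the vertex $g(u)$ is a neighbour of $f(v)$. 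Discarding those few exceptional $u$, we find that $g(\Gamma(v))$ contains at least $n - s(n)$ vertices lying in $\Gamma(f(v))$, a single neighbourhood. That already shows $|\Gamma(f(v)) \cap g(\Gamma(v))| \ge n - s(n)$, and applying this with $v$ replaced by a neighbour and comparing shows $g$ maps the neighbourhood of $v$ to within distance-type error $O(s(n))$ of the neighbourhood of $f(v)$; taking the dual of $g$ to be $w \mapsto f(w)$ (or rather the vertex guaranteed by this argument), I get that $g = f_\star$ is $Ks(n)$-approximately local for a suitable constant $K$. In fact the bound just derived gives $K = 1$ directly for this particular dual, but allowing a larger constant absorbs the bookkeeping of which vertex plays the role of the dual and the requirement $s(n) < n/2$ that makes the dual unique.

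The main subtlety — and the step I would be most careful about — is making sure the exceptional sets do not compound badly: for a fixed $v$ we discard at most $s(n)$ neighbours $u$ (those with $f(v) \notin \Gamma(g(u))$), and we must check that the remaining $\ge n - s(n)$ images $g(u)$ genuinely land in the \emph{same} neighbourhood $\Gamma(f(v))$, which is immediate since membership $f(v) \in \Gamma(g(u))$ is exactly the statement $g(u) \in \Gamma(f(v))$. One then has to verify that the vertex $f(v)$ is indeed the unique candidate dual value for $g$ at $v$ when $s(n) < n/2$ (two vertices of $Q_n$ whose neighbourhoods share more than $n/2$ points must coincide), so that $g_\star(v)$ is well-defined and equals $f(v)$. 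Assembling these observations, possibly invoking Theorem \ref{f-1-stability} or Lemma \ref{a.l.inverse} only if the cruder direct bound is not quite enough for the stated constant, completes the proof.
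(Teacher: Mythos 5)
Your opening plan --- bound $|\Gamma(g(\Gamma(v)))|$ by $\binom{n}{2}+O(ns(n))$ and then invoke Theorem \ref{f-1-stability} --- is essentially correct and very close to the paper's proof (which instead bounds $|\Gamma(g^{-1}(\Gamma(v)))|$, applies Theorem \ref{f-1-stability} to conclude $g^{-1}$ is $O(s(n))$-approximately local, and then uses Lemma \ref{a.l.inverse}). But you then abandon that route as ``not the cleanest'' and pivot to a direct argument that is wrong. The false step is the claim that, because $v\in\Gamma(u)$ for every $u\in\Gamma(v)$ and $|f(\Gamma(u))\cap\Gamma(g(u))|\ge n-s(n)$, one has $f(v)\in\Gamma(g(u))$ for all but at most $s(n)$ values of $u\in\Gamma(v)$. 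The hypothesis controls, \emph{for each fixed} $u$, the number of $x\in\Gamma(u)$ with $f(x)\notin\Gamma(g(u))$ (at most $s(n)$ of them), but it says nothing about which $x$ are exceptional; nothing prevents the particular element $v\in\Gamma(u)$ from being exceptional for \emph{every} $u\in\Gamma(v)$. As $u$ ranges over $\Gamma(v)$ you are looking at $n$ different exceptional sets $B_u\subseteq\Gamma(u)$, not a single set of size $s(n)$.

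A concrete counterexample to the conclusion you are trying to draw: let $f$ be the transposition that swaps $\mathbf 0$ and its antipode $\bar{\mathbf 0}$ and fixes every other vertex. For $n\ge 3$ this $f$ is $1$-approximately local with dual $g=f_\star=\mathrm{id}$ (for $u=e_i$ one has $f(\Gamma(e_i))=(\Gamma(e_i)\setminus\{\mathbf 0\})\cup\{\bar{\mathbf 0}\}$, which still meets $\Gamma(e_i)$ in $n-1$ vertices, and the single ``bad'' element of $\Gamma(e_i)$ is $\mathbf 0$ for every $i$). Then $g(\Gamma(\mathbf 0))=\Gamma(\mathbf 0)$, while $f(\mathbf 0)=\bar{\mathbf 0}$, so $|g(\Gamma(\mathbf 0))\cap\Gamma(f(\mathbf 0))|=0$, not $n-O(s(n))$. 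The dual of $g$ at $\mathbf 0$ is $\mathbf 0$, not $f(\mathbf 0)$. More generally, $g_\star=f_{\star\star}$ need not equal $f$; this is precisely the distinction that forces the paper to single out the \emph{diagonal} bijections in Definition \ref{diagdefn}, and your argument would make that notion vacuous. To repair the proof, return to your first plan: from $|f(\Gamma(u))\cap\Gamma(g(u))|\ge n-s(n)$ deduce $\Gamma(g(u))\subseteq f(\Gamma(u))\cup E_u$ with $|E_u|\le s(n)$, union over $u\in\Gamma(v)$ to get $\Gamma(g(\Gamma(v)))\subseteq f(\{v\}\cup\Gamma^2(v))\cup\bigcup_u E_u$, hence $|\Gamma(g(\Gamma(v)))|\le\binom{n}{2}+1+ns(n)$ by bijectivity of $f$, and then apply Theorem \ref{f-1-stability} to obtain \emph{some} clustering vertex $w$ (which in general is neither $v$ nor $f(v)$) as the value of $g_\star(v)$. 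That is the argument the paper carries out (phrased via $g^{-1}$ and Lemma \ref{a.l.inverse}).
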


\begin{proof}
We will show that $f_{\star}^{-1}$ is $Ks(n)$-approximately local, and then apply Lemma \ref{a.l.inverse}. Let $s(n)<n/2$ be a function with $s(n) \rightarrow \infty$ and $s(n) = o(n)$ as $n \rightarrow \infty$. Suppose that $f \in \mathrm{Local}_{s(n)}$ and let $g = f_{\star}$ (so that for all $v \in V(Q_n)$, $|f(\Gamma(v))\cap \Gamma(g(v))| \ge n-s(n)$). Fix some $v \in V(Q_n)$. For each $w \in \Gamma(v)$, writing $w' = g^{-1}(w)$, we have $|\Gamma(w) \cap f(\Gamma(w'))| \ge n-s(n)$ and so $|\Gamma^2(v) \cap f(\Gamma(w'))| \ge n-s(n)$. Let $R_w = f(\Gamma(w'))\setminus \Gamma^2(v)$, so $|R_w| \le s(n)$. Now
	\begin{align*}
		f\left(\Gamma\left(g^{-1}\left(\Gamma\left(v\right)\right)\right)\right) & = \bigcup_{w \in \Gamma (v)} f\left(\Gamma \left(g^{-1}(w)\right)\right)  \\
		& \subseteq \Gamma^2\left(v\right) \cup \bigcup_{w \in \Gamma\left(v\right)}R_w.
	\end{align*}
Since $f$ is a bijection, we see that
	\begin{align*}
		|\Gamma\left(g^{-1}\left(\Gamma\left(v\right)\right)\right)| \le \binom{n}{2} + ns\left(n\right).
	\end{align*}
Since $g^{-1}(\Gamma(v)) \subseteq V(Q_n)$ is a subset of size $n$, we may appeal to Theorem \ref{f-1-stability} to see that there exists some $w \in V(Q_n)$ such that $|\Gamma(w)\cap g^{-1}(\Gamma(v))| = n - O(s(n))$. Then $g^{-1}$ is $O(s(n))$-approximately local. Since $s(n) = o(n)$, it follows that $g^{-1}$ must have a unique, bijective dual. By Lemma \ref{a.l.inverse}, we conclude that $g$ is $O(s(n))$-approximately local.
\end{proof}

\begin{defn}\label{diagdefn}
For an $s(n)$-approximately local bijection $f$, we say that $f$ is \em diagonal \em if it is the dual of its dual, i.e. if $f_{\star \star} = f$.
\end{defn}

For a natural number $s$ (which may depend on $n$), let $\mathrm{Diag}_s$ be the set of diagonal bijections in $\mathrm{Local}_s$. The next two results will show that an $s(n)$-approximately local diagonal bijection induces large rigid structures within the hypercube.

\begin{cor}\label{bothways}
Let $s(n)$ be a function with $s(n) \rightarrow \infty$ and $s(n) = o(n)$ as $n \rightarrow \infty$. Then there exists a constant $K>1$ such that the following holds: Suppose $f$ is an $s(n)$-approximately local diagonal bijection and let $G = (V(Q_n),E')$ where
	\begin{align*}
		E' = \left\{uv \in E(Q_n) : f(u)f_{\star}(v),f(v)f_{\star}(u) \in E(Q_n)\right\}.
	\end{align*}
Then $G$ has minimum degree at least $n-Ks(n)$.
\end{cor}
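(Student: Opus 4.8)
The plan is to count, for a fixed vertex $v$, the edges $uv \in E(Q_n)$ that fail the condition defining $E'$, and show that there are at most $O(s(n))$ of them. Since $f$ is $s(n)$-approximately local with dual $g = f_\star$, we have $|f(\Gamma(v)) \cap \Gamma(g(v))| \ge n - s(n)$, and because $f$ is diagonal, $f_{\star\star} = f$, so $g$ is also $s(n)$-approximately local with $g_\star = f$, giving $|g(\Gamma(v)) \cap \Gamma(f(v))| \ge n - s(n)$ as well; by Lemma \ref{g-stability} (or by the diagonal hypothesis directly) both $f$ and $g$ are genuinely $O(s(n))$-approximately local with unique bijective duals, and by Lemma \ref{a.l.inverse} their inverses are too. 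First I would fix $v$ and pass to the set $S_v$ of neighbours $u \in \Gamma(v)$ that are ``good in every relevant sense'': $f(u) \in \Gamma(g(v))$, $g(u) \in \Gamma(f(v))$, and also — applying the approximate locality at the neighbour $u$ rather than at $v$ — $f(v) \in \Gamma(g(u))$ and $g(v) \in \Gamma(f(u))$. Each of these four conditions excludes at most $O(s(n))$ of the $n$ neighbours of $v$ (two of them by locality at $v$, the other two by locality at $u$ combined with the fact that a vertex is good at $u$ for all but $O(s(n))$ of its own neighbours — here one needs a short double-counting argument over pairs $(u,v)$, exactly as in the proof of Lemma \ref{g-stability}), so $|S_v| \ge n - O(s(n))$.

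Next I would check that every $u \in S_v$ gives an edge of $E'$. We need $f(u) f_\star(v) = f(u) g(v) \in E(Q_n)$ and $f(v) g(u) \in E(Q_n)$. Consider the first: we know $f(u) \in \Gamma(g(v))$ whenever $u$ was selected so that locality at $v$ places $f(u)$ in the neighbourhood of $g(v)$ — but that is exactly one of the defining conditions of $S_v$. So $f(u) g(v) \in E(Q_n)$ directly. Symmetrically $f(v) g(u) \in E(Q_n)$ is another of the defining conditions. Hence $uv \in E'$ for all $u \in S_v$, and so $\deg_G(v) \ge |S_v| \ge n - Ks(n)$ for a suitable constant $K > 1$ (enlarging $K$ to absorb the constant $1$ and all the implicit constants). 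Since $v$ was arbitrary, $G$ has minimum degree at least $n - Ks(n)$, as claimed.

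The main obstacle is the bookkeeping that turns ``$f$ is $s(n)$-approximately local at $v$'' and ``at $u$'' into a uniform bound on the number of bad neighbours of a \emph{fixed} $v$: locality at each individual $u \in \Gamma(v)$ only tells us $f$ behaves well on $\Gamma(u)$, which contains $v$ but not the other neighbours of $v$, so one cannot naively union-bound over $u$. The fix is a double-counting / averaging step: the number of pairs $(u,u')$ with $u \in \Gamma(v)$, $u' \in \Gamma(u)$, and $f(u') \notin \Gamma(g(u))$ is at most $n \cdot s(n)$ summed over $u$, hence at most $s(n) \cdot n$ pairs have $v$ in the second coordinate; more carefully, for each $u$ there are $\le s(n)$ bad $u'$, and $v$ is one of the $n$ neighbours of $u$, so summing the ``badness of $v$ seen from $u$'' over $u \in \Gamma(v)$ is bounded, but we actually need it bad for a \emph{single} reason — the cleanest route is to note that the set of $u \in \Gamma(v)$ for which $f(v) \notin \Gamma(g(u))$ has size $\le s(n)$ because these are among the $\le s(n)$ ``exceptional'' neighbours of $v$ under $g^{-1}$ being approximately local, which we get from Lemma \ref{g-stability} applied to $g$ (valid since $f$ is diagonal, so $g$ is itself $O(s(n))$-approximately local). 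Threading this correctly — i.e. identifying which of the four conditions follow from locality at $v$, which from locality/duality statements about $g$, and checking the constants compose to something finite independent of $v$ — is the only delicate point; everything else is immediate from the definitions of $E'$ and of a dual.
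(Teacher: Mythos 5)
Your core argument is correct and lands on the same proof as the paper, but you have introduced an unnecessary complication.  Your four ``goodness'' conditions collapse to two: since adjacency in $Q_n$ is symmetric, $f(v)\in\Gamma(g(u))$ is literally the same statement as $g(u)\in\Gamma(f(v))$, and $g(v)\in\Gamma(f(u))$ is the same as $f(u)\in\Gamma(g(v))$.  So the conditions you label as ``by locality at $u$'' are identical to the ones ``by locality at $v$'', and the double-counting detour (which, as you half-notice, does not actually yield a pointwise bound at a fixed $v$) is never needed.  The paper's proof is exactly the two-condition version of what you wrote: for fixed $v$, the number of $u\in\Gamma(v)$ with $f(u)\notin\Gamma(f_{\star}(v))$ equals $|\Gamma(f_{\star}(v))\setminus f(\Gamma(v))|\le s(n)$ by locality of $f$ at $v$, and the number with $f_{\star}(u)\notin\Gamma(f(v))$ equals $|\Gamma(f(v))\setminus f_{\star}(\Gamma(v))| = |\Gamma(f_{\star\star}(v))\setminus f_{\star}(\Gamma(v))| \le K's(n)$, using the diagonal hypothesis $f_{\star\star}=f$ together with Lemma~\ref{g-stability} to control $f_{\star}$.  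Adding these gives $\deg_G(v)\ge n-(K'+1)s(n)$.  One small slip worth correcting: your initial claim that diagonality alone makes $g=f_{\star}$ be $s(n)$-approximately local with the same constant is not justified; you do need Lemma~\ref{g-stability} here, which you acknowledge a line later.
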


\begin{proof}
Let $f$ be an $s(n)$-approximately local diagonal bijection. By Lemma \ref{g-stability}, there exists some $K' >0$ such that $f_{\star}$ is $K's(n)$-approximately local. Now pick $v \in V(Q_n)$ and note that
	\begin{align}
		\deg_G\left(v\right) \ge n - \left(|\Gamma\left(f_{\star}\left(v\right)\right) \setminus f\left(\Gamma\left(v\right)\right)| + |\Gamma\left(f\left(v\right)\right) \setminus f_{\star}\left(\Gamma\left(v\right)\right)|\right). \label{bothways1}
	\end{align}
Since $f \in \mathrm{Local}_{s(n)}$ and $f_{\star} \in \mathrm{Local}_{K's(n)}$, $|\Gamma(f_{\star}(v)) \setminus f(\Gamma(v))| \le s(n)$ and $|\Gamma(f_{\star \star}(v)) \setminus f_{\star}(\Gamma(v))| \le K's(n)$. Recall that $f$ is diagonal, so $f_{\star \star} = f$ and $\Gamma(f_{\star \star}(v)) \setminus f_{\star}(\Gamma(v)) = \Gamma(f(v)) \setminus f_{\star}(\Gamma(v))$. Putting these inequalities into \eqref{bothways1}, we see that $\deg_G(v) \ge n - Ks(n)$, where $K = K'+1$.
\end{proof}

\begin{lem}\label{krigid}
Let $s(n)$ be a function with $s(n) \rightarrow \infty$ and $s(n) = o(n)$ as $n \rightarrow \infty$, and suppose $G = (V(Q_n),E')$ is a subgraph of the hypercube with minimum degree at least $n -s(n)$. For a vertex $v \in V(Q_n)$, let $R_0(v) = \left\{v\right\}$, and then recursively for $i \ge 1$ let 
	\begin{align}
		R_i(v) = \left\{w \in \Gamma_{Q_n}^i(v) : \Gamma_{Q_n}(w) \cap \Gamma_{Q_n}^{i-1}(v) = \Gamma_{G}(w) \cap R_{i-1}(v)\right\}. \label{layering}
	\end{align}
Then $|R_k(v)| \ge \binom{n}{k} - en^{k-1}s(n)$ for all $k\ge 1$.
\end{lem}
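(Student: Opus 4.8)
The plan is to prove the bound $|R_k(v)| \ge \binom{n}{k} - en^{k-1}s(n)$ by induction on $k$, tracking carefully how many vertices of each layer $\Gamma^k_{Q_n}(v)$ fail the defining condition \eqref{layering}. Fix $v$; without loss of generality take $v = \emptyset$, so that $\Gamma^i_{Q_n}(v) = [n]^{(i)}$ and $|\Gamma^i_{Q_n}(v)| = \binom{n}{i}$. Write $B_i = \Gamma^i_{Q_n}(v) \setminus R_i(v)$ for the set of ``bad'' vertices in layer $i$; the goal is to show $|B_k| \le e n^{k-1} s(n)$. A vertex $w \in \Gamma^i_{Q_n}(v)$ is in $B_i$ precisely when $\Gamma_{Q_n}(w) \cap \Gamma^{i-1}_{Q_n}(v) \ne \Gamma_G(w) \cap R_{i-1}(v)$. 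Since $G \subseteq Q_n$ and $R_{i-1}(v) \subseteq \Gamma^{i-1}_{Q_n}(v)$, the right-hand side is always a subset of the left-hand side, so $w$ is bad iff there is some down-neighbour $u$ of $w$ (i.e.\ $u \in \Gamma_{Q_n}(w) \cap \Gamma^{i-1}_{Q_n}(v)$) with either $uw \notin E'$ or $u \notin R_{i-1}(v)$. In other words, $w$ is bad iff it has a down-neighbour that is itself bad, or it has a down-neighbour $u \in R_{i-1}(v)$ with $uw \notin E'$.

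The key step is to bound $|B_i|$ in terms of $|B_{i-1}|$. Each vertex $w$ in layer $i$ has exactly $i$ down-neighbours. Counting edges between $B_{i-1}$ and layer $i$: each bad vertex in layer $i-1$ has at most $n - (i-1)$ up-neighbours in layer $i$, so the number of layer-$i$ vertices having a bad down-neighbour is at most $|B_{i-1}|(n-i+1) \le n|B_{i-1}|$. For the second source of badness, I count pairs $(u,w)$ with $u \in \Gamma^{i-1}_{Q_n}(v)$, $w \in \Gamma^i_{Q_n}(v)$, $u \subseteq w$, and $uw \notin E(G) = E'$ — call these the ``broken edges'' between layers $i-1$ and $i$. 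Since $G$ has minimum degree at least $n - s(n)$, each vertex $u$ is incident (in $Q_n$) to at most $s(n)$ non-edges of $G$ in total, so the number of broken edges between layers $i-1$ and $i$ is at most $\binom{n}{i-1} \cdot s(n)$ — actually at most $\sum_{u \in \Gamma^{i-1}} s(n)$, but more useful is that each such $u$ contributes at most $s(n)$ broken up-edges; summing gives at most $\binom{n}{i-1}s(n)$. Hence the number of $w$ in layer $i$ that are bad for the second reason is at most $\binom{n}{i-1}s(n) \le n^{i-1}s(n)$. Combining, $|B_i| \le n|B_{i-1}| + n^{i-1}s(n)$.

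Unwinding this recursion from $B_0 = \emptyset$ gives $|B_k| \le \sum_{i=1}^{k} n^{k-i} \cdot n^{i-1}s(n) = k \, n^{k-1} s(n)$, which is slightly weaker than the claimed $e\, n^{k-1}s(n)$; I expect the constant $e$ in the statement comes from being a little more careful with the $\binom{n}{i-1}$ versus $n^{i-1}$ estimate — specifically, replacing $n^{i-1}s(n)$ by $\binom{n}{i-1}s(n) \le \frac{n^{i-1}}{(i-1)!}s(n)$ and then summing $\sum_{i\ge 1} 1/(i-1)! = e$, and similarly carrying the factorials through the multiplication by $n$ at each stage (noting $n|B_{i-1}|$ propagates the $1/(i-2)!$ bound, and $\binom{n}{i}^{-1}$-type savings accumulate). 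The main obstacle is getting this constant to come out to exactly $e$ rather than something larger: one must check that the ``has a bad down-neighbour'' term, when expanded, contributes $\sum_j \binom{n}{j}/\binom{n}{k} \cdot (\text{stuff})$ and that the binomial ratios telescope so that the whole sum is dominated by $e\, n^{k-1}s(n)$. A clean way is to prove by induction the sharper bound $|B_k| \le n^{k-1}s(n)\sum_{i=0}^{k-1} \frac{1}{i!}$, using at the inductive step that $n|B_{k-1}| \le n^{k-1}s(n)\sum_{i=0}^{k-2}\frac1{i!}$ and the broken-edge count is at most $\binom{n}{k-1}s(n) \le \frac{n^{k-1}}{(k-1)!}s(n)$; then $\sum_{i=0}^{k-1}\frac1{i!} < e$ finishes it. The only subtlety to check carefully is that the "bad down-neighbour" count really is at most $n|B_{k-1}|$ and not something that reintroduces a factor of $k$.
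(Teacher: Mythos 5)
Your proof is correct and, once you carry out the sharper induction $|B_k| \le n^{k-1}s(n)\sum_{i=0}^{k-1}\frac{1}{i!}$, it is essentially identical to the paper's argument: the paper defines the same quantity $Y_k = \sum_{j=0}^{k-1}\frac{1}{j!}$ via the recurrence $Y_1=1$, $Y_{m+1}=\frac{1}{m!}+Y_m$, and derives exactly the inclusion $\Gamma_{Q_n}^{m+1}(v)\setminus R_{m+1}(v) \subset \bigcup_{u\in\Gamma^m_{Q_n}(v)}(\Gamma_{Q_n}(u)\setminus\Gamma_G(u)) \cup \bigcup_{w\in\Gamma^m_{Q_n}(v)\setminus R_m(v)}\Gamma_{Q_n}(w)$ that underlies your two sources of badness. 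Your worry about the bad-down-neighbour term reintroducing a factor of $k$ is unfounded for exactly the reason you gave: each bad vertex in layer $i-1$ has at most $n$ up-neighbours, full stop, so this contributes $n|B_{i-1}|$, and the factorial saving that yields the constant $e$ enters only through the broken-edge term $\binom{n}{i-1}s(n)\le\frac{n^{i-1}}{(i-1)!}s(n)$ and then telescopes through the recursion just as you describe.
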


Note that $w \in R_i(v)$ if and only if $w$ is at distance $i$ from $v$ in the hypercube, and $G$ contains all shortest $vw$ paths found in the hypercube.

\begin{proof}
We will show by induction on $k$ that $|R_k(v)| \ge \binom{n}{k} - Y_kn^{k-1}s(n)$ where $Y_1 = 1$ and inductively for $i > 1$, $Y_{i+1} = \frac{1}{i!} + Y_i = \sum_{j=1}^i\frac{1}{j!}$ (so then $Y_k \le e$ for all $k$). The base case $k=1$ follows directly from the minimum degree condition, giving $Y_1 = 1$.

So suppose the result holds for $k \le m$ (so that $|R_k(v)| \ge \binom{n}{k} - Y_k n^{k-1}s(n)$ for all $v \in V(Q_n)$ and $k \le m$) and consider $x \in \Gamma_{Q_n}^{m+1}(v) \setminus R_{m+1}(v)$. Then either there is an edge missing between $\Gamma_{Q_n}^m(v)$ and $x$ in $G$, or there is a vertex $w \in \Gamma_{Q_n}^m(v) \setminus R_m(v)$ with $x \in \Gamma_{Q_n}(w)$. We therefore have the following relation
	\begin{align*}
		\Gamma_{Q_n}^{m+1}(v) \setminus R_{m+1}(v) &\subset \bigcup_{u \in \Gamma_{Q_n}^m(v)}(\Gamma_{Q_n}(u)\setminus \Gamma_G(u)) \cup \bigcup_{w \in \Gamma_{Q_n}^m(v) \setminus R_m(v)}\Gamma_{Q_n}(w).
	\end{align*}
The inductive hypothesis then gives
	\begin{align*}
		|\Gamma_{Q_n}^{m+1}(v) \setminus R_{m+1}(v)| &\le \binom{n}{m}s(n) + Y_mn^{m-1}s(n)n \\
		&\le \left(\frac{1}{m!}+Y_m\right)n^ms(n) = Y_{m+1}n^ms(n). 
	\end{align*}
Thus $|R_{m+1}(v)| \ge \binom{n}{m+1} - Y_{m+1}n^ms(n) \ge \binom{n}{m+1} - en^ms(n)$.
\end{proof}

Suppose that there is a colouring $\chi$ and an $s(n)$-approximately local bijection $f$ such that $f \in \mathrm{Isom}^{(2)}(\chi)$. The next lemma shows that the $\chi$-colouring of a $2$--ball around a vertex $v \in V(Q_n)$ differs by $O(ns(n))$ from the $\chi$-colouring of the $2$--ball around $f_{\star \star}^{-1}(f(v))$. Note that there is no ambiguity in writing $f_{\star \star}^{-1}$, as Lemma \ref{a.l.inverse} tells us that $(g_{\star})^{-1} = (g^{-1})_{\star}$. This result will later allow us to consider only diagonal bijections and so be able to apply Lemma \ref{krigid}.

\begin{lem}\label{f-h-inverse}
Let $s(n)$ be a function with $s(n) \rightarrow \infty$ and $s(n) = o(n)$ as $n \rightarrow \infty$, and let $f \in \mathrm{Local}_{s(n)}$. If $\chi : V(Q_n) \rightarrow \left\{0,1\right\}$ is such that $f \in \mathrm{Isom}^{(2)}(\chi)$, then for all $v \in V(Q_n)$, $$d(\chi^{(2)}(v),\chi ^{(2)}(f_{\star \star}^{-1}(f(v)))) = O(ns(n)).$$
\end{lem}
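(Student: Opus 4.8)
The plan is to track how the colouring $\chi^{(2)}(v)$ is transported by $f$ and then pulled back by $f_{\star\star}^{-1}$, controlling the error at each step by the approximate-locality parameter $s(n)$. Write $g = f_\star$, and note that by Lemma~\ref{g-stability} there is a constant $K$ with $g \in \mathrm{Local}_{Ks(n)}$, and by Lemma~\ref{a.l.inverse} we may speak unambiguously of $g^{-1} = (f^{-1})_\star$ and hence of $f_{\star\star}^{-1}$, with $g^{-1}$ itself $O(s(n))$-approximately local. Set $v' := f_{\star\star}^{-1}(f(v)) = g^{-1}(f(v))$. The key observation driving the bound is that $f \in \mathrm{Isom}^{(2)}(\chi)$ means $\chi^{(2)}(f^{-1}(w)) \cong \chi_f^{(2)}(w)$ for every $w$, so the multiset of $\chi$-colours on $\Gamma^k(f^{-1}(\Gamma(v)))$ is essentially determined (after re-indexing by a family of permutations) by $\chi_f$ on $B_{k+1}(v)$; in particular the colours that $\chi$ assigns near $f^{-1}(B_2(v))$ are governed by the colours $\chi_f$ assigns on $B_3(v)$, which is how $\chi^{(2)}(v)$ enters the picture via $\chi_f(v) = \chi(f^{-1}(v))$.

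First I would establish that $B_2(v')$ and $f^{-1}(\Gamma(v))$ are ``close'' as sets, in the symmetric-difference sense, up to $O(ns(n))$ vertices. Since $f$ is $s(n)$-approximately local with dual $g$, $|f(\Gamma(v)) \cap \Gamma(g(v))| \ge n - s(n)$; iterating this one more layer (exactly as in the proof of Lemma~\ref{g-stability}) shows $|f(\Gamma^2(v)) \,\triangle\, \Gamma^2(g(v))| = O(ns(n))$, and similarly for the first layer. Dually, $g^{-1} \in \mathrm{Local}_{O(s(n))}$ controls how $g^{-1}$ moves neighbourhoods of $f(v)$, and because $f$ is diagonal in the relevant composition (the point of using $f_{\star\star}$), the vertex $g^{-1}(f(v))$ plays the role of a genuine ``center'': $\Gamma(v')$ and $g^{-1}(\Gamma(f(v)))$ agree up to $O(s(n))$ vertices, and pushing through $f$ one sees $f(\Gamma(v'))$ and $\Gamma(f(v))$ agree up to $O(s(n))$, hence $f(B_2(v'))$ and $B_2(f(v))$ agree up to $O(ns(n))$ vertices. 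Chaining the two set-comparisons yields that $B_2(v)$, transported by $f$, lands within $O(ns(n))$ of $B_2(v')$ transported by $f$ — equivalently, modulo an $O(ns(n))$-size exceptional set $X$, the bijection $f$ carries $B_2(v) \setminus X$ into $f(B_2(v'))$.

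Next I would use the $\mathrm{Isom}^{(2)}$ hypothesis to turn this set-level closeness into colour-level closeness. For $w \in \Gamma(v)$ we have $\chi^{(2)}(f^{-1}(w)) \cong \chi_f^{(2)}(w)$, so there is a permutation $\pi^w$ of coordinates with $\chi(f^{-1}(w) + e_{\pi^w(j)} + e_{\pi^w(k)}) = \chi_f(w + e_j + e_k)$; in other words the $\chi$-colours on the second hypercube-neighbourhood of each $f^{-1}(w)$ are, up to relabelling, exactly the $\chi_f$-colours on $B_2(w) \subseteq B_3(v)$. Because $v'$ is (up to the $O(ns(n))$ error from the previous paragraph) the common approximate center of the sets $\{f^{-1}(w) : w \in \Gamma(v)\}$, these local colourings glue: outside the exceptional set, $\chi$ restricted to $B_2(v')$ is obtained from $\chi_f$ restricted to $B_3(v)$ by the same coordinate-permutation bookkeeping, and $\chi_f|_{B_3(v)}$ in turn records $\chi$ on $B_2(v)$-type neighbourhoods via $\chi_f = \chi \circ f^{-1}$. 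Carefully matching these two descriptions of the colours on the bulk of $B_2(v')$ against $\chi^{(2)}(v)$ shows that the number of vertices where an optimal isomorphism $B_2(v) \to B_2(v')$ fails to preserve $\chi$ is at most the total size of all the exceptional sets introduced, namely $O(ns(n))$, giving $d(\chi^{(2)}(v), \chi^{(2)}(v')) = O(ns(n))$ as claimed.

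The main obstacle I anticipate is the bookkeeping in the last step: the coordinate-permutations $\pi^w$ vary with $w$, so the re-indexing needed to identify "$\chi$ near $f^{-1}(\Gamma(v))$" with "$\chi_f$ on $B_3(v)$" is not a single hypercube isometry, and one must check that these local relabellings are mutually compatible on the overlap regions up to only $O(ns(n))$ discrepancies — this is exactly the kind of argument appearing in the proof of Lemma~\ref{scott}, and I would model the compatibility estimate on it. The set-theoretic comparisons in the second paragraph are routine given Lemmas~\ref{g-stability}, \ref{a.l.inverse} and the iteration trick from Lemma~\ref{g-stability}, each contributing an additive $O(ns(n))$ that we simply sum.
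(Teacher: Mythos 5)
Your plan captures the right high-level idea --- transport $\chi^{(2)}(v)$ through the $\mathrm{Isom}^{(2)}$ hypothesis and bound the error by approximate locality --- and you correctly flag the permutation-compatibility issue as the crux. But there are two concrete problems that leave a genuine gap.

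First, the set-level comparisons confuse $f_\star$ with $f_{\star\star}$. With $g=f_\star$, the approximate-locality statement $|f(\Gamma(u))\cap\Gamma(g(u))|\ge n-s(n)$, applied to each $u\in\Gamma(v)$, shows that $f(\Gamma^2(v))$ lies mostly in $\Gamma(g(\Gamma(v)))$; to push this one more layer you must use the approximate locality of $g$ itself, whose dual is $f_{\star\star}$, landing you in $\Gamma^2(f_{\star\star}(v))$, not $\Gamma^2(g(v))$. Likewise the identification $v'=f_{\star\star}^{-1}(f(v))=g^{-1}(f(v))$ is false: by Lemma~\ref{a.l.inverse}, $f_{\star\star}^{-1}=(g_\star)^{-1}=(g^{-1})_\star$, which is not $g^{-1}=(f^{-1})_\star$. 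The clean way to set this up (and what the paper does) is to work on the inverse side: put $\beta=f^{-1}$, $g'=\beta_\star=(f_\star)^{-1}$, $h=g'_\star=\beta_{\star\star}=f_{\star\star}^{-1}$, and $w=f(v)$, so that $v'=h(w)$, and show that $\beta$ maps most of $\Gamma^2(w)$ near-isometrically into $\Gamma^2(h(w))$.

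Second, and more seriously, the step you defer --- gluing the family of local coordinate permutations $\pi^i$ into a single hypercube isometry up to $O(ns(n))$ error --- is exactly what makes the lemma non-trivial, and pointing at Lemma~\ref{scott} does not resolve it: that lemma is a probabilistic union bound and contains no permutation-gluing. The mechanism the paper uses is purely structural. One fixes a reference permutation $\pi^\star$ (supplied by the $O(s(n))$-approximate locality of $g'$) with $g'(w+e_i)=h(w)+e_{\pi^\star(i)}$ for $i$ in a large set $S$; for each $i\in S$ one picks $\pi^i$ with $\beta(w+e_i+e_j)=g'(w+e_i)+e_{\pi^i(j)}$ for $j$ in a large set $T^i$; and then for $i,j$ in the good sets one double-counts $\beta(w+e_i+e_j)$ to get
\begin{align*}
h(w)+e_{\pi^\star(i)}+e_{\pi^i(j)} = h(w)+e_{\pi^\star(j)}+e_{\pi^j(i)},
\end{align*}
which forces $\pi^i(j)=\pi^\star(j)$. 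This is the key identity that turns a collection of local relabellings into one global isometry on all but $O(ns(n))$ vertices of $\Gamma^2(w)$; without it the argument does not close. Once $\beta$ is seen to act on almost all of $B_2(w)$ via the single isometry $z\mapsto h(w)+e_{\pi^\star(\cdot)}+e_{\pi^\star(\cdot)}$, composing with the isometry $y:B_2(v)\to B_2(w)$ coming from $\chi^{(2)}(v)\cong\chi_f^{(2)}(w)$ and using $\chi_f=\chi\circ\beta$ gives $d(\chi^{(2)}(v),\chi^{(2)}(h(w)))=O(ns(n))$.
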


\begin{proof}
Let $f$ be an $s(n)$-approximately local bijection and let $\beta = f^{-1}$. Let $g = \beta_{\star}$. By Lemmas \ref{a.l.inverse} and \ref{g-stability}, there is a $K>0$ such that $g$ is $Ks(n)$-approximately local. Let $h = g_{\star}=\beta_{\star \star}$ be the dual of $g$.

Let $v \in V(Q_n)$, $w = f(v)$, and let $S = \left\{i : g(w+e_i) \in \Gamma(h(w))\right\}$. Note that $|S| \ge n - Ks(n)$ since $g$ is $Ks(n)$-approximately local. Then let $\pi^{\star}$ be a permutation on $[n]$ such that $g(w+e_i) = h(w)+e_{\pi^{\star}(i)}$ for all $i \in S$.

For each $i \in S$, let $T^i = \left\{j : \beta(w+e_i+e_j) \in \Gamma(g(w+e_i))\right\}$. Note that $|T^i| \ge n -s(n)$ for each $i$ since $\beta$ is $s(n)$-approximately local. Then let $\pi^i$ be a permutation on $[n]$ such that $\beta(w+e_i+e_j) = g(w+e_i)+e_{\pi^i(j)}$ for all $j \in T^i$.

If $i \in S$ and $j \in T^i$, then
	\begin{align}
		\beta\left(w+e_i+e_j\right) &= g\left(w+e_i\right)+e_{\pi^i\left(j\right)} \nonumber \\
		&= h\left(w\right) + e_{\pi^{\star}\left(i\right)} + e_{\pi^i\left(j\right)}. \nonumber
	\end{align}
Analogously, if $j \in S$ and $i \in T^j$, then $\beta(w+e_i+e_j) = h(w) + e_{\pi^{\star}(j)} + e_{\pi^j(i)}$. We then have $e_{\pi^{\star}(j)} + e_{\pi^j(i)} = e_{\pi^{\star}(i)} + e_{\pi^j(j)}$. Since $e_{\pi^{\star}(i)} \neq e_{\pi^{\star}(j)}$, we must have $e_{\pi^{\star}(i)} = e_{\pi^j(i)}$ and $e_{\pi^{\star}(j)} = e_{\pi^i(j)}$. Therefore $\beta(w+e_i+e_j) = h(w) + e_{\pi^{\star}(i)} + e_{\pi^{\star}(j)}$. Now, let 
	\begin{align}
		W = \left\{w+e_i+e_j : i \neq j \in [n], \beta\left(w+e_i+e_j\right) = h\left(w\right)+e_{\pi^{\star}\left(i\right)}+e_{\pi^{\star}\left(j\right)} \right\}. \nonumber
	\end{align}
If $w + e_i + e_j \not\in W$, then it must be that either $i$ and $j$ are not both in $S$, or $i$ is not in $T^j$, or $j$ is not in $T^i$. Hence we can bound $\Gamma^2(w)\setminus W$ as follows.
	\begin{align}
		\Gamma^2(w)\setminus W &\subseteq \left\{w + e_i+e_j : \left\{i,j\right\} \not\subseteq S\right\} \cup \left\{w + e_i+e_j : i\in S, j \not\in T^i\right\} \nonumber \\
		&= \left\{w + e_i+e_j : \left\{i,j\right\} \not\subseteq S\right\} \cup \bigcup_{i\in S} \left\{w + e_i+e_j : j \not\in T^i\right\}. \label{herror1}
	\end{align}
Recall that $|S| \ge n-Ks(n)$ and so since $s(n) = o(n)$
	\begin{align}
		| \left\{w + e_i+e_j : \left\{i,j\right\} \not\subseteq S\right\}| = {n \choose 2} - {|S| \choose 2} \le Kns(n)(1+o(1)). \label{herror2} 
	\end{align}
Similarly $|T^i| \ge n-s(n)$ for all $i \in S$ and so
	\begin{align}
		\left|\bigcup_{i\in S} \left\{w + e_i+e_j : j \not\in T^i\right\}\right| \le ns(n). \label{herror3}
	\end{align}
Combining \eqref{herror1}, \eqref{herror2} and \eqref{herror3} we see that
	\begin{align}
	\label{herrorTotal}
		|\Gamma^2(w)\setminus W| \le (1+K)ns(n)(1+o(1)).
	\end{align}

Now suppose also that $f \in \mathrm{Isom}^{(2)}(\chi)$. Then $\chi^{(2)}(v) \cong \chi_f^{(2)}(w)$ and so there exists an isomorphism $y$ from $B_2(v)$ to $B_2(w)$ such that $(\chi_f \circ y)\restriction_{B_2(v)}=\chi \restriction_{B_2(v)}$. Let $\rho$ be a permutation on $[n]$ such that $y(v + e_j) = w+e_{\rho(j)}$ for each $j\in[n]$. Then for distinct $i,j \in [n]$
	\begin{align}
		\chi(v + e_i + e_j) = \chi_f(w+e_{\rho(i)} + e_{\rho(j)}). \label{qwerty}
	\end{align}

Let $W^{\rho} = \left\{v+e_{\rho^{-1}(a)}+e_{\rho^{-1}(b)} : w + e_a + e_b \in W\right\}$, so that clearly $|W^{\rho}| = |W|$. Recall that for $w + e_i + e_j \in W$ we have $$w + e_i + e_j = f(h(w) + e_{\pi^{\star}(i)}+e_{\pi^{\star}(j)}).$$ Combining this with \eqref{qwerty} gives, for $v+e_{\rho^{-1}(i)}+e_{\rho^{-1}(j)} \in W^{\rho}$
	\begin{align*}
		\chi\left(v+e_{\rho^{-1}\left(i\right)}+e_{\rho^{-1}\left(j\right)}\right) &= \chi_f\left(w + e_i+e_j\right) \\
		&= \chi_f\left(f\left(h\left(w\right)+e_{\pi^{\star}\left(i\right)}+e_{\pi^{\star}\left(j\right)}\right)\right) \\
		&= \chi\left(h\left(w\right) + e_{\pi^{\star}\left(i\right)}+e_{\pi^{\star}\left(j\right)}\right).
	\end{align*}
	
Now $\zeta(v+e_{\rho^{-1}(i)}+e_{\rho^{-1}(j)}) = h(w)+e_{\pi^{\star}(i)}+e_{\pi^{\star}(j)}$ defines an isomorphism between $B_2(v)$ and $B_2(h(w))$. Further, we have
\[
\chi\left(v+e_{\rho^{-1}\left(i\right)}+e_{\rho^{-1}\left(j\right)}\right) = \chi \circ \zeta \left(v+e_{\rho^{-1}\left(i\right)}+e_{\rho^{-1}\left(j\right)}\right),
\]
for each $v +e_{\rho^{-1}(i)}+e_{\rho^{-1}(j)} \in W^{\rho}$. Therefore $D(\chi \restriction_{B_2(v)},(\chi \circ \zeta)\restriction_{B_2(v)}) \le (\binom{n}{2} - |W^{\rho}|)+n+1$, and so $d(\chi^{(2)}(v),\chi^{(2)}(h(w))) \le |\Gamma^2(w)\setminus W|+n+1$. It follows from \eqref{herrorTotal} and the definition of $h$ that
\[
d(\chi^{(2)}(v),\chi ^{(2)}(f_{\star \star}^{-1}(f(v)))) \le (1+K)ns(n)(1+o(1)) = O(ns(n)).
\]
\end{proof}

\section{Proof of Theorem \ref{main}}\label{combination}
In this section we prove Theorem \ref{main} by combining the probabilistic and structural results proved in Sections \ref{probability} and \ref{structure} respectively. Much of the work has already been done for this. Indeed, by Lemma \ref{scott} and Corollary \ref{ftoj} we may assume that if $f \in \mathrm{Isom}^{(2)}(\chi)$, then $f$ is $s(n)$-approximately local, for some $s(n) = o(n)$. 

For a graph $G=(V,E)$ we say that a subset of the vertices $A \subseteq V$ is \em$t$--spread \em if $A \cap B_{t-1}(u) = u$ for all $u \in A$ (so then all pairs of vertices in $A$ cannot be joined by a path of length $t-1$ or less). We start with a simple proposition which allows us to cover a fraction of the 10th neighbourhood of a vertex with $6$-spread large sets.

\begin{prop}\label{greedycovering}
Let $\delta,\varepsilon > 0$ be such that $2\varepsilon \delta < \frac{1}{10!}$. Then for sufficiently large $n$, there exists a collection of disjoint sets $(A_i)_{i \in J}$ where $J = \{1,\ldots,,\lceil \varepsilon n^6 \rceil\}$, such that each $A_i$ is a $6$-spread subset of $[n]^{(10)}$ and $|A_i| = \lceil \delta n^4 \rceil.$
\end{prop}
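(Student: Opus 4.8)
The plan is to construct the sets $A_i$ greedily, one element at a time, while maintaining the $6$-spread property within each $A_i$. The key observation is that the ball $B_5(u)$ in $Q_n$ has size $\sum_{j=0}^5 \binom{n}{j} = \binom{n}{5}(1+o(1)) = \Theta(n^5)$, so whenever we have already placed some elements into the sets, the total ``forbidden'' region around them is only polynomially large in $n$, and we need to compare this with the size $\binom{n}{10} = \Theta(n^{10})$ of the ground set $[n]^{(10)}$.

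More precisely, first I would note that we are building $\lceil \varepsilon n^6\rceil$ sets each of target size $\lceil \delta n^4\rceil$, so the total number of elements we ever place is at most $\lceil \varepsilon n^6\rceil \cdot \lceil \delta n^4\rceil \le 2\varepsilon\delta n^{10}$ for $n$ large (absorbing the ceilings). Process the sets in order; within each $A_i$, repeatedly pick any vertex $w \in [n]^{(10)}$ that (a) has not yet been placed in any $A_j$, and (b) is not within distance $5$ in $Q_n$ of any vertex already placed in $A_i$. Add $w$ to $A_i$. Condition (b) guarantees that $A_i$ remains $6$-spread, since two distinct vertices of $A_i$ are always at distance at least $6$; condition (a) guarantees the $A_i$ are disjoint. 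The process for $A_i$ can only get stuck before reaching size $\lceil\delta n^4\rceil$ if every vertex of $[n]^{(10)}$ is either already used or lies in $B_5$ of some current element of $A_i$.

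Now I would bound when this can happen. At any point during the construction, the number of already-used vertices is at most $2\varepsilon\delta n^{10} < \tfrac{1}{10!}n^{10}$, while the current $A_i$ has fewer than $\lceil\delta n^4\rceil$ vertices, each ``blocking'' at most $|B_5(u)| \le 6\binom{n}{5} \le n^5$ vertices (for $n$ large), so the blocked region from $A_i$ has size at most $\lceil\delta n^4\rceil \cdot n^5 = O(n^9) = o(n^{10})$. Since $|[n]^{(10)}| = \binom{n}{10} \ge \tfrac{n^{10}}{2\cdot 10!}$ for $n$ large, and $2\varepsilon\delta < \tfrac{1}{10!}$, the number of forbidden vertices is at most $\left(2\varepsilon\delta + o(1)\right)n^{10} < \binom{n}{10}$ for sufficiently large $n$; hence there is always an available vertex, and each $A_i$ reaches its target size. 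This completes the construction.

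The main obstacle — really the only thing requiring care — is making the two counting bounds line up: one must check that the hypothesis $2\varepsilon\delta < \tfrac{1}{10!}$ is exactly what is needed so that the total number of used vertices stays strictly below $\binom{n}{10}$ once the lower-order ``blocking'' term $O(n^9)$ and the constant from $\binom{n}{10} = \tfrac{n^{10}}{10!}(1+o(1))$ are taken into account. Everything else is a routine greedy argument; I would simply be slightly generous with the constants (e.g. using $\binom{n}{10} \ge \tfrac{n^{10}}{2\cdot 10!}$ and $|B_5(u)| \le n^5$) so that the strict inequality $2\varepsilon\delta < \tfrac{1}{10!}$ leaves enough room to absorb the $o(1)$ terms for all sufficiently large $n$.
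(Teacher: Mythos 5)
Your proposal is correct and takes the greedy route that the paper itself acknowledges but does not carry out: the paper remarks that ``a greedy algorithm easily proves this result'' and instead applies the Hajnal--Szemer\'edi theorem. Specifically, the paper builds an auxiliary graph $G$ on $[n]^{(10)}$ with two $10$-sets adjacent when they are at Hamming distance at most $5$, observes $\Delta(G)\le n^5$, and takes an equitable proper $k$-colouring of $G$ with $k=\lceil\varepsilon n^6\rceil>n^5$ colours; each colour class is automatically $6$-spread and has size $\ge \lfloor\binom{n}{10}/k\rfloor \ge \lceil\delta n^4\rceil$ exactly because $2\varepsilon\delta<1/10!$. Both proofs hinge on the same counting: roughly $\varepsilon\delta n^{10}$ vertices must fit into $\binom{n}{10}\approx n^{10}/10!$ slots, with the factor-$2$ slack in the hypothesis absorbing ceilings and the lower-order $O(n^9)$ blocking term; Hajnal--Szemer\'edi packages the balancing deterministically while your greedy argument is self-contained. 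One point of hygiene worth fixing: the inequality $\binom{n}{10}\ge n^{10}/(2\cdot 10!)$ you quote is too weak to justify your chain $(2\varepsilon\delta+o(1))n^{10}<\binom{n}{10}$ under only the hypothesis $2\varepsilon\delta<1/10!$ (that would require $\varepsilon\delta<1/(4\cdot 10!)$). Instead use $\binom{n}{10}=(1-o(1))\,n^{10}/10!$, or equivalently note that the used-vertex count is $(1+o(1))\varepsilon\delta n^{10}$ rather than $2\varepsilon\delta n^{10}$; the strict inequality $2\varepsilon\delta<1/10!$ then provides a genuine multiplicative gap that absorbs all $o(1)$ error terms for large $n$.
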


A greedy algorithm easily proves this result, but a nicer proof is an application of a result of Hajnal and Szemer\'{e}di.

\begin{thm}[Hajnal-Szemer\'{e}di \cite{hajsze}]\label{hajszethm}
Let $G = (V,E)$ be a graph on $n$ vertices with maximum degree $\Delta$. Then for any $k > \Delta,$ there exists a proper $k$-colouring of $G$ with colour classes all of size $\left\lceil \frac{n}{k} \right\rceil$ or $\left\lfloor \frac{n}{k} \right\rfloor$.
\end{thm}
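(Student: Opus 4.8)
The plan is to prove Theorem~\ref{hajszethm} by induction on $n=|V(G)|$, with $k$ fixed throughout and only the hypothesis $k>\Delta(G)$ assumed; since deleting a vertex cannot increase the maximum degree, this hypothesis is preserved under vertex deletion and the induction is legitimate. The base case $n\le k$ is trivial (make each vertex its own class). For the inductive step, choose any vertex $v$, apply the inductive hypothesis to $G-v$, and let $V_1,\dots,V_k$ be the resulting proper $k$-colouring of $G-v$, so every $|V_i|\in\{\lfloor(n-1)/k\rfloor,\lceil(n-1)/k\rceil\}$. As $v$ has at most $k-1$ neighbours, some class contains no neighbour of $v$. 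A brief check of the three possibilities $n\equiv 0$, $n\equiv 1$, $n\not\equiv 0,1 \pmod k$ shows that either we may insert $v$ into a neighbour-free class and immediately obtain an equitable $k$-colouring of $G$, or else, after inserting $v$ into a suitable neighbour-free class, we obtain a proper $k$-colouring of $G$ whose largest class has exactly two more vertices than its smallest. It thus suffices to prove the following \emph{Rebalancing Lemma}: if $\Delta(G)\le k-1$ and $G$ is partitioned into independent sets of sizes $n_1\ge n_2\ge\cdots\ge n_k$ with $n_1\ge n_k+2$, then $G$ has an independent-set partition with size multiset $\{n_1-1,n_2,\dots,n_{k-1},n_k+1\}$. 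Each application of this lemma strictly decreases $\sum_i n_i^2$ (the change is $2(n_k-n_1)+2\le-2$), so repeated application terminates with all class sizes within one of each other, which, as the sizes sum to $n$, is exactly the conclusion of the theorem; in the inductive step above one application suffices.

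To prove the Rebalancing Lemma I would introduce the auxiliary digraph $D$ on the set $\{U_1,\dots,U_k\}$ of classes, with an arc $U_i\to U_j$ whenever some vertex of $U_i$ has no neighbour in $U_j$ (equivalently, whenever some vertex can be moved from $U_i$ to $U_j$ leaving both sets independent). Fix a largest class $U_1$ and a smallest class $U_k$; since $|U_1|>|U_k|$ these are distinct. If $D$ contains a directed path $U_1=U_{i_0}\to U_{i_1}\to\cdots\to U_{i_t}=U_k$, process its arcs from the last down to the first, relocating for each arc $U_{i_{\ell-1}}\to U_{i_\ell}$ a witness vertex from $U_{i_{\ell-1}}$ to $U_{i_\ell}$. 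The only point to verify is routine: the path meets each class at most once, every affected class only loses a vertex before later gaining one, and the witnesses, chosen inside the original classes, remain available; hence every class stays independent, and the net effect is precisely to decrease $|U_1|$ by one and increase $|U_k|$ by one, giving the required partition.

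It remains to handle the case where $D$ has no directed $U_1$--$U_k$ path. Let $\mathcal A$ be the set of classes reachable from $U_1$ in $D$ and $\mathcal B$ its complement, so $U_1\in\mathcal A$, $U_k\in\mathcal B$, and no arc of $D$ leaves $\mathcal A$; the last fact says that \emph{every} vertex in a class of $\mathcal A$ has a neighbour in \emph{every} class of $\mathcal B$. Writing $A^\ast=\bigcup_{U\in\mathcal A}U$ and $B^\ast=\bigcup_{W\in\mathcal B}W$, this gives $e(A^\ast,B^\ast)\ge|\mathcal B|\cdot|A^\ast|$, while the degree bound gives $e(A^\ast,B^\ast)\le(k-1)|B^\ast|$. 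If $|\mathcal B|=1$ then $B^\ast=U_k$ and $|A^\ast|=n-n_k\ge(k-1)n_k+2>(k-1)|B^\ast|$, a contradiction; so whenever no path exists we must have $|\mathcal B|\ge 2$ (and, running the same digraph with arcs reversed, $|\mathcal A|\ge 2$ as well).

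The genuine obstacle is the case $|\mathcal B|\ge 2$, where the crude double count no longer closes and one must exploit the finer structure of the bipartite graph between $A^\ast$ and $B^\ast$. Here the plan is to follow the argument of Hajnal and Szemer\'edi \cite{hajsze} (or one of its later streamlined versions): pass to a counterexample to the Rebalancing Lemma that is minimal with respect to a suitable parameter --- for instance the number of edges of $G$, with ties broken lexicographically --- and study the \emph{solo} edges from $A^\ast$ to $B^\ast$, namely the edges $xy$ with $x$ in an $\mathcal A$-class and $y$ in a $\mathcal B$-class $W$ such that $y$ is the unique neighbour of $x$ in $W$. A carefully chosen sequence of exchange moves then either produces a relocation contradicting minimality, or exhibits a strictly smaller sub-instance inside $B^\ast$ to which the lemma may be applied inductively. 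Designing the minimal counterexample and the exchange moves so that every configuration is covered is by far the most delicate part of the whole proof; everything else is bookkeeping.
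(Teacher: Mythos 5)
This statement is the classical Hajnal--Szemer\'edi theorem; the paper does not prove it but simply cites \cite{hajsze}, so there is no in-paper argument to compare against. Your proposal is therefore judged on its own terms, and as written it has a genuine gap. The outer induction on $n$, the case analysis modulo $k$, the reduction to your Rebalancing Lemma, the augmenting-path argument in the class digraph $D$, and the edge-count ruling out $|\mathcal B|=1$ are all correct and are exactly the easy part of the standard modern treatment (essentially Kierstead--Kostochka). But the case $|\mathcal B|\ge 2$ with no directed $U_1$--$U_k$ path is not an afterthought to be ``followed from \cite{hajsze}'' --- it \emph{is} the theorem. Everything you prove before that point would go through just as well for a hypothetical strengthening with $k=\Delta$ colours, which is false (consider $K_{k+1}$ or, for equitability, $K_{k,k}$ with $k$ odd), so no argument that stops where yours stops can be complete: the entire content of the degree hypothesis $k>\Delta$ beyond the crude double count must enter precisely in the solo-edge/minimal-counterexample analysis you defer. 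Writing ``a carefully chosen sequence of exchange moves then either produces a relocation contradicting minimality, or exhibits a strictly smaller sub-instance'' names the shape of the missing argument without supplying any of it; in Kierstead and Kostochka's short proof this step still occupies several pages and is where all the difficulty lives.

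A secondary issue: your Rebalancing Lemma is stated for an arbitrary partition with $n_1\ge n_k+2$, which is more general than what the literature proves (the nearly equitable case, where all classes have one of two consecutive sizes except for one outlier); it is not obvious that the general version is true, and you do not need it --- after inserting $v$ your classes are always within $2$ of one another, so you should state and use only the nearly equitable version. For the purposes of this paper, none of this matters: the theorem is a standard cited result and requires no proof here. If you do want a self-contained proof, you must either reproduce the solo-edge argument in full or cite it explicitly as the source of that step rather than presenting the reduction as if it were most of the work.
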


\begin{proof}[Proof of Proposition \ref{greedycovering}]
Define the graph $G$ on the vertex set $[n]^{(10)}$, where two vertices are connected if they are at Hamming distance at most five from one another. The neighbourhood of a vertex $v$ is contained within the $5$-ball around $v,$ and so the maximum degree in $G$ is at most $n^5$. Let $k = \lceil \varepsilon n^6 \rceil$, and take $n$ large enough so that $k > n^5$. By Theorem \ref{hajszethm}, there exists a $k$-colouring with colour classes $C_1,\ldots,C_k$ of size $\left\lceil{n\choose 10}k^{-1}\right\rceil$ or $\left\lfloor{n\choose 10}k^{-1}\right\rfloor$. Each colour class $C_i$ is a $6$-spread subset of $[n]^{(10)}$ and has size at least $\left\lfloor{n\choose 10}k^{-1}\right\rfloor$. For $n$ sufficiently large $\left\lfloor{n\choose 10}k^{-1}\right\rfloor \ge \frac{n^4}{2\varepsilon 10!} > \delta n^4$. Therefore, for each $i=1,\ldots,k,$ we can take a $6$-spread subset $A_i \subseteq C_i$ of size $|A_i| = \lceil \delta n^4\rceil$.
\end{proof}

Recall that a colouring $\chi$ of the hypercube is $2$-indistinguishable if there is a bijection $f$ for which $\chi_f$ and $\chi$ are $2$-locally equivalent and there exist two non-adjacent vertices $u,v$ such that $f(u)$ and $f(v)$ are adjacent in the hypercube.

\begin{proof}[Proof of Theorem \ref{main}]
Let $\varepsilon > 0$ and let $p = p(n)$ satisfy $n^{-1/4 + \varepsilon} \le p(n) \le 1/2$ for sufficiently large $n.$ Let $\chi$ be a random $(p,1-p)$-colouring of the hypercube $Q_n$. By Lemma \ref{scott}, there is a $K>0$ such that with high probability, for every $f \in \mathrm{Isom}^{(2)}(\chi)$ we have $f^{-1} \in \mathrm{Cluster}^2_{Kn^2p^{-1}\log n}$. Let $s(n) = \frac{\log n}{p}$ (so $s \rightarrow \infty$ and $s = o(n)$ as $n \rightarrow \infty$). We have
	\begin{align*}
		\bP\left[\chi \ \mathrm{is} \mbox{ $2$-indist.}\right] = \bP\left[\exists f \in \mathrm{Isom}^{(2)}(\chi) \ \mathrm{s.t.} \ f^{-1}\in \mathrm{Cluster}^2_{Kn^2s }, \chi \circ f^{-1} \not\cong \chi\right] + o(1).
	\end{align*}
By Corollary \ref{ftoj}, there exists a $K'>0$ such that $\mathrm{Cluster}^2_{Kn^2s } \subseteq \mathrm{Local}_{K's }$, so that
	\begin{align*}
		\bP\left[\chi \ \mathrm{is} \mbox{ $2$-indist.}\right] = \bP\left[\exists f \in \mathrm{Isom}^{(2)}(\chi) \ \mathrm{s.t.} \ f^{-1} \in \mathrm{Local}_{K's },\chi \circ f^{-1} \not\cong \chi\right] + o(1).
	\end{align*}
Then by Lemma \ref{a.l.inverse} we can express this entirely in terms of $f$:
	\begin{align*}
		\bP\left[\chi \ \mathrm{is} \mbox{ $2$-indist.}\right] = \bP\left[\exists f\in \mathrm{Isom}^{(2)}(\chi) \ \mathrm{s.t.} \ f \in \mathrm{Local}_{K's },\chi \circ f^{-1} \not\cong \chi\right] + o(1).
	\end{align*}
Suppose that there exists such an $f \in \mathrm{Local}_{K's } \setminus \mathrm{Diag}_{K's }$, and pick a vertex $v\in V(Q_n)$ such that $f_{\star \star}^{-1}\circ f (v) \neq v$. If $f \in \mathrm{Isom}^{(2)}(\chi)$, then by Lemma \ref{f-h-inverse}, $d(\chi^{(2)}(v),\chi ^{(2)}(f_{\star \star}^{-1}(f(v)))) = O(ns(n))$. But by Lemma \ref{strongunique}, the probability that there is a pair of distinct vertices $x,y$ with $d(\chi^{(2)}(x),\chi^{(2)}(y)) < \frac{n^2p(1-p)}{2}$ is $o(1)$. Since $s(n) = \tfrac{\log n}{p}$ and $p \ge n^{-1/4}$ for sufficiently large $n$, we get that the probability we can choose $f \in \mathrm{Isom}^{(2)}(\chi)$ with  $f \in \mathrm{Local}_{K's } \setminus \mathrm{Diag}_{K's }$ and $\chi \circ f^{-1} \not\cong \chi$ is $o(1)$.

Thus
	\begin{align*}
		\bP\left[\chi \ \mathrm{is} \ \mbox{ $2$-indist.}\right] = \bP\left[\exists f\in \mathrm{Isom}^{(2)}(\chi) \ \mathrm{s.t.} \ f \in \mathrm{Diag}_{K's },\chi \circ f^{-1} \not\cong \chi\right] + o(1).
	\end{align*}

Suppose that $f \in \mathrm{Isom}^{(2)}(\chi)$ with $f \in \mathrm{Diag}_{K's }$, and let $g = f_{\star}$. Recall that by Lemma \ref{g-stability} there exists a constant $L>0$ such that $g \in \mathrm{Local}_{Ls }$. As in Corollary \ref{bothways}, we let $G = (V(Q_n),E')$ where $$E' = \left\{xy \in E\left(Q_n\right) : f(x)g(y),f(y)g(x) \in E\left(Q_n\right)\right\}.$$ Then $G$ has minimum degree at least $n-Ms$ for some constant $M$. Furthermore, define $R_k(w)$ as Lemma \ref{krigid} (see \eqref{layering}). So $|R_k(w)| \ge {n \choose k} - eMn^{k-1}s$.

For each $u \in V(Q_n)$, let $\pi_u$ be a permutation on $[n]$ such that $g(u + e_j) = f(u)+e_{\pi_u(j)}$ for all $j$ such that $u+e_j \in R_1(u)$. We claim that for $k > 1$ odd, for each $w \in R_k(u),$ the vertex $g(w)$ is uniquely determined by the sequence $(f(w))_{w \in R_{k-1}(u)}$. Indeed, suppose that $w = u + \sum_{j=1}^k e_{i_j}$ is in $R_k(u)$. Then $\Gamma_{Q_n}(w) \cap \Gamma^{k-1}_{Q_n}(u) = \Gamma_G(w) \cap R_{k-1}(u)$. Then for all $\ell \in [k]$, $u + \sum_{j \in [k] \setminus \ell} e_{i_j} \in R_{k-1}(u)$ and $g(w)f(u + \sum_{j \in [k] \setminus \ell} e_{i_j}) \in E(Q_n)$. However, there is a unique vertex in the hypercube adjacent to $f(u + \sum_{j \in [k] \setminus \ell} e_{i_j})$ for all $\ell$, and so $g(w)$ is determined by $(f(w))_{w \in R_{k-1}(u)}$. We may similarly say that when $k>1$ is even, $(f(w))_{w \in R_{k}(u)}$ can be determined by $(g(w))_{w\in R_{k-1}(u)}$ (note that when $k=2$, there may be a choice of two vertices adjacent to both $g(u + e_i)$ and $g(u + e_j)$, but one of these is $f(u)$).

Inductively for $k \ge 0$ we then have 
	\begin{align}
		f\left(u + \sum_{j\in S}e_j\right) = f(u)+\sum_{j\in S}e_{\pi_u(j)}, \label{star}
	\end{align}
for all $S \in [n]^{(2k)}$ such that $u+\sum_{j\in S}e_j \in R_{2k}(u)$, and
	\begin{align}
		g\left(u + \sum_{j\in T}e_j\right) = f(u)+\sum_{j\in S}e_{\pi_u(j)}, \nonumber
	\end{align}
for all $T \in [n]^{(2k+1)}$ such that $u+\sum_{j\in T}e_j \in R_{2k+1}(u)$. (For example, if $u + e_1+e_2 + e_3 \in R_3(u)$, then $g(u + e_1+ e_2 + e_3)$ is adjacent to $f(u+e_1+e_2)$, $f(u+e_1+e_3)$ and $f(u+e_2 + e_3)$. By the inductive hypothesis, $f(u+e_1+e_2)=f(u) +e_{\pi_u(1)} + e_{\pi_u(2)}$, $f(u+e_1+e_3)=f(u) +e_{\pi_u(1)} + e_{\pi_u(3)}$, and $f(u+e_2+e_3)=f(u) +e_{\pi_u(2)} + e_{\pi_u(3)}$. There is only one vertex adjacent to all three, and so $g(u+e_1+e_2+e_3) = f(u) + e_{\pi_u(1)} + e_{\pi_u(2)} + e_{\pi_u(3)}$.)

Fix two non-adjacent vertices $u,v \in V(Q_n)$. Our goal is to show that $f(u)$ and $f(v)$ cannot be adjacent. We do this by first showing that if $f(u)$ and $f(v)$ are adjacent, then there are rigid structures around each which are adjacent. We then take substructures of these rigid structures which are $6$-spread (this will allow us to say that the colouring of the $2$-balls around the vertices of these substructures are independent from one another). Finally we consider that if two vertices are adjacent, the colour of one has to fit in with the colouring of the $2$-ball around the other. We are then able to show that this cannot happen with high probability (helped greatly by the independence attained by restricting ourselves to the specified substructures).

Let $C = \{S \in [n]^{(10)} : u+\sum_{j\in S}e_{{\pi_u}^{-1}(j)} \in R_{10}(u), v+\sum_{j\in S}e_{{\pi_v}^{-1}(j)} \in R_{10}(v)\}$, then by Corollary \ref{bothways} and Lemma \ref{krigid}, $|C| \ge \binom{n}{10} -2en^9s $. We now split into three cases depending on the distance between $u$ and $v$. In each case we define a subset $C' \subseteq C$, which we will exploit later.

\begin{itemize}
\item[Case A:] \hspace{15pt} $u = v + e_s+e_t$. In this instance, let
\[
C' = \{S \in C :  (\pi_u^{-1}(S) \cup \pi_v^{-1}(S)) \cap \{s,t\} = \emptyset\}.
\]
Then $|C'| \ge \binom{n}{10} - O(n^9 s )$, and if $a \in \{u+\sum_{j\in S}e_{\pi_u^{-1}(j)}: S \in C'\}$ and $ b\in \{v+\sum_{j\in S}e_{\pi_v^{-1}(j)}: S \in C'\}$ then $a$ and $b$ are at an even distance at least two from each other.

\item[Case B:] \hspace{15pt} $u = v + e_s + e_t + e_r$. In this instance, let
\[
C' = \{S \in C :  (\pi_u^{-1}(S) \cup \pi_v^{-1}(S)) \cap \{s,t,r\} = \emptyset\},
\]
so $|C'| \ge \binom{n}{10} - O(n^9 s )$. If $a \in \{u+\sum_{j\in S}e_{\pi_u^{-1}(j)}: S \in C'\}$, then there may be a unique vertex in $\{v+\sum_{j\in S}e_{\pi_v^{-1}(j)}: S \in C'\}$ at distance three from $a$. In this case, let $b_a$ be this vertex and otherwise let $b_a$ be an arbitrary vertex at distance $3$ from $a$. If $a \in \{u+\sum_{j\in S}e_{\pi_u^{-1}(j)} : S \in C'\}$ and $b \in \{v+\sum_{j\in S}e_{\pi_v^{-1}(j)} : S \in C'\} \setminus \{b_a\}$, then the distance between $a$ and $b$ in the hypercube is at least $5$ (as the distance between them is odd and greater than $3$).

\item[Case C:] \hspace{15pt} $u$ and $v$ are at distance at least four from each other. In this instance, let $s,t,r,y$ be such that the distance between $u+e_s+e_t+e_r+e_y$ and $v$ is four less than the distance between $u$ and $v$. Then let
\[
C' = \{S \in C :  (\pi_u^{-1}(S) \cup \pi_v^{-1}(S)) \cap \{s,t,r,y\} = \emptyset\}.
\]
Then $|C'| \ge \binom{n}{10} - O(n^9 s )$, and if $a \in \{u+\sum_{j\in S}e_{\pi_u^{-1}(j)}: S \in C'\}$ and $ b\in \{v+\sum_{j\in S}e_{\pi_v^{-1}(j)}: S \in C'\}$ then $a$ and $b$ are at a distance at least four from each other.
\end{itemize}

We now come to fixing our substructures. Let $\delta, \varepsilon > 0$ be such that $2\varepsilon \delta < \frac{1}{10!}$ and choose sets $(A_s)_{s \in J}$ (with $|A_s| = \left\lceil \delta n^4 \right\rceil$ for each $s$, and $|J| = \left\lceil\varepsilon n^6\right\rceil$) as in Proposition \ref{greedycovering}. Note that $|\bigcup_{s \le \lceil \varepsilon n \rceil} A_s| \ge \delta \varepsilon n^{10}$ and so $|(\bigcup_{s \le \lceil \varepsilon n \rceil} A_s) \cap C'| \ge \delta \varepsilon n^{10} - O(n^9s )$. By the pigeonhole principle there exists a $j \in J$ such that $|A_{j} \cap C'| \ge \delta n^4 - O(n^3 s )$. Let $C'' = A_{j} \cap C'$. This approach of appealing to Lemma \ref{greedycovering} may seem unnecessary, but is important as it reduces the number of substructures we have to consider, in turn helping the union bound we take later.

We now give explicit events detailing how the colourings of our substructure have to ``fit in" with one another. Roughly speaking, for adjacent vertices $y$ and $z,$ we consider that the first neighbourhood of $y$ is the first neighbourhood of the neighbourhood of $z$. For all vertices $w \in V(Q_n)$, let
\[
\psi(w) = \sum_{x \in \Gamma(w)}\chi(x) - n(1-p),
\]
and then let
\[
\Psi(w) = \{\psi(x) : x \in \Gamma(w)\}.
\]
Recall that $\chi_f^{(2)}(f(w)) \cong \chi^{(2)}(w)$ for all $w \in V(Q_n)$. If $f(u)f(v) \in E(Q_n)$, then \eqref{star} gives
\[
\chi_f^{(2)}\left(f(u)+ \sum_{\ell \in S}e_{\ell}\right) \cong \chi^{(2)}\left(u+ \sum_{\ell \in S}e_{\pi_u^{-1}(\ell)}\right)
\]
and
\[
\chi_f^{(2)}\left(f(v)+ \sum_{\ell \in S}e_{\ell}\right) \cong \chi^{(2)}\left(v + \sum_{\ell\in S}e_{\pi_v^{-1}(\ell)}\right)
\]
for all $S \in C''$. This means that $\psi(u+ \sum_{\ell\in S}e_{\pi_u^{-1}(\ell)}) \in \Psi(v+ \sum_{\ell\in S}e_{\pi_v^{-1}(\ell)})$ for all $S \in C''$. For permutations $\pi_1,\pi_2$ and $S \subseteq [n]^{(10)}$, let $B^{\pi_1,\pi_2}_S$ be the event
	\begin{align}
		B^{\pi_1,\pi_2}_S = \left\{\psi\left(u+ \sum_{\ell\in S}e_{\pi_1(\ell)}\right) \in \Psi\left(v+ \sum_{\ell\in S}e_{\pi_2(\ell)}\right)\right\}. \nonumber
	\end{align}
Note that if $f(u)f(v) \in E(Q_n)$, then $B^{\pi_u^{-1},\pi_v^{-1}}_S$ occurs for all $S \in C''$.

Considering $\chi$ as a fixed colouring, given $j \in J$ and a pair of permutations $\pi_1,\pi_2$, we say that a subset $C'' \subseteq A_j$ of size $\delta n^4 - O(n^3 s)$ is a \em $(j,\pi_1,\pi_2)$-tester \em if $j, \pi_1,\pi_2,C''$ satisfy the properties outlined in Case A, Case B, or Case C as appropriate. Let $T_j(\pi_1,\pi_2)$ be the set of $(j,\pi_1,\pi_2)$-testers. If $f(u)f(v) \in E(Q_n)$ then there is a $j \in J$, pair of permutations $\pi_1,\pi_2$, and $C'' \in T_j(\pi_1,\pi_2)$ such that $B^{\pi_1^{-1},\pi_2^{-1}}_S$ occurs for all $S \in C''$.

We can then bound the probability that there exists an $f\in \mathrm{Diag}_{K's }$ for which $f(u)f(v) \in E(Q_n)$ and $f \in \mathrm{Isom}^{(2)}(\chi)$ by
	\begin{align}
		&\bP\left[\exists f\in \mathrm{Isom}^{(2)}(\chi) \ \mathrm{s.t.} \ f \in \mathrm{Diag}_{K's }, f(u)f(v) \in E(Q_n)\right] \nonumber \\
		&\qquad \le \bP\left[\bigcup_{\pi_1,\pi_2}\bigcup_{j\in J}\bigcup_{C'' \in T_j(\pi_1,\pi_2)}\bigcap_{S \in C''}B^{\pi_1,\pi_2}_S\right] \nonumber \\
		&\qquad \le \sum_{\pi_1,\pi_2}\sum_{j\in J}\sum_{C''\in T_j(\pi_1,\pi_2)}\bP\left[\bigcap_{S \in C''}B^{\pi_1,\pi_2}_S\right]. \nonumber
	\end{align}
Note that we have $\exp\left\{O\left(n\log n\right)\right\}$ choices for the permutations $\pi_1$ and $\pi_2$. We then have $|J|=O\left(n^6\right)$ choices for $j \in J.$ Finally, note that $T_j(\pi_1,\pi_2) \subseteq A_j^{\left(|A_j| - O\left(n^3s\right)\right)}$, so that there are at most $\binom{\delta n^4}{O\left(n^3s\right)} = \exp\left\{O\left(n^3s\log n\right)\right\}$ choices for $C'' \in T_j\left(\pi_1,\pi_2\right).$ Therefore, if we found a uniform upper bound $D$ for $\bP\left[\bigcap_{S \in C''}B^{\pi_1,\pi_2}_S\right],$ we would have
	\begin{align}
		\bP\left[\exists f\in \mathrm{Isom}^{(2)}(\chi) \ \mathrm{s.t.} \ f \in \mathrm{Diag}_{K's }, f(u)f(v) \in E(Q_n)\right] \le D\exp\left\{O\left(n^3s\log n\right)\right\}. \label{unionb1}
	\end{align}

Note that for each $w \in V(Q_n)$, $\psi(w)$ is determined by $(\chi(x))_{x \in \Gamma(w)}$, and $\Psi(w)$ is determined by $(\chi(x))_{x \in \Gamma^2(w) \cup \left\{w\right\}}$. Since the sets in $C''$ are all at distance at least $6$ from each other, $((\chi(x))_{x \in \Gamma(u+ \sum_{i\in S}e_{\pi_1(i)})})_{S \in C''}$ is a family of disjoint sets of random variables. This means that $(\psi(u+ \sum_{j\in S}e_{\pi_1(j)}))_{S \in C''}$ is a family of independent identically distributed random variables. Similarly, $(\Psi(v+ \sum_{j\in S}e_{\pi_2(j)}))_{S \in C''}$ is a family of independent identically distributed random variables.

We now come to finding our uniform upper bound $D$. We again have to split this up into the three cases. Case B is the hardest and the work covering this case also caters for Case A and Case C.

\begin{itemize}
\item[Case A:] \hspace{15pt} Suppose that $C''$ satisfies the properties outlined in Case A. Since all vertices $a \in \left\{u+\sum_{j\in S}e_{\pi_1(j)}: S \in C''\right\}$ and $ b\in \left\{v+\sum_{j\in S}e_{\pi_2(j)}: S \in C''\right\}$ are an even distance at least $2$ from each other, $\Gamma(a)$ and $\Gamma^2(b)\cup\left\{b\right\}$ do not intersect. Therefore $(\psi(u+ \sum_{j\in S}e_{\pi_1(j)}))_{S \in C''}$ and $(\Psi(v+ \sum_{j\in S}e_{\pi_2(j)}))_{S \in C''}$ are independent families of random variables and so, picking an arbitrary $S_0 \in C''$, 
	\begin{align}
		\bP\left[\bigcap_{S \in C''}B^{\pi_1,\pi_2}_S\right] &= \bP\left[B^{\pi_1,\pi_2}_{S_0}\right]^{|C''|}. \label{interA}
	\end{align}
	
\item[Case C:] \hspace{15pt} Suppose that $C''$ satisfies the properties outlined in Case C. Since all vertices $a \in \left\{u+\sum_{j\in S}e_{\pi_1(j)}: S \in C''\right\}$ and $ b\in \left\{v+\sum_{j\in S}e_{\pi_2(j)}: S \in C''\right\}$ are at distance at least $4$ from each other, $\Gamma(a)$ and $\Gamma^2(b)\cup\left\{b\right\}$ do not intersect. We can then follow the line of argument as in Case A, and \eqref{interA} again holds.

\item[Case B:] \hspace{15pt} Suppose that $C''$ satisfies the properties outlined in Case B. For each $a \in \{u+\sum_{j\in S}e_{\pi_1(j)}: S \in C''\}$, let $\psi'(a) = \sum_{w \in \Gamma(a) \setminus \Gamma^2(b_a)}\chi(w) - (n-3)(1-p)$. Then as in the previous cases, $(\psi'(u+ \sum_{j\in S}e_{\pi_1(j)}))_{S \in C''}$ and $(\Psi(v+ \sum_{j\in S}e_{\pi_2(j)}))_{S \in C''}$ are independent families of random variables. Define the events $\Lambda^{\pi_1,\pi_2}_S$ by 
\[
\Lambda^{\pi_1,\pi_2}_S = \left\{\psi'\left(u+ \sum_{j\in S}e_{\pi_1\left(j\right)}\right) \in \Psi\left(v+ \sum_{j\in S}e_{\pi_2\left(j\right)}\right)+[-3,3]\right\}.
\]
Since $|\Gamma(a) \cap \Gamma^2(b_a)| = 3$, we have $B^{\pi_1,\pi_2}_S \subseteq \Lambda^{\pi_1,\pi_2}_S$. Then picking an arbitrary $S_0 \in C''$, we obtain
	\begin{align}
		\bP\left[\bigcap_{S \in C''}B^{\pi_1,\pi_2}_S\right]&\le\bP\left[\bigcap_{S \in C''}\Lambda^{\pi_1,\pi_2}_S\right] \nonumber \\
		&= \bP\left[\Lambda^{\pi_1,\pi_2}_{S_0}\right]^{|C''|}.\label{interB}
	\end{align}
Note that, in fact, in cases A and C, for any $a \in \left\{u+\sum_{j\in S}e_{\pi_1(j)}: S \in C''\right\}$ we could define $b_a$ to be an arbitrary vertex at distance $3$ from $a$. Then, \eqref{interB} is in fact an upper bound in all three cases, hence we now focus on bounding that expression.

\qquad Let $x = u +  \sum_{j\in S_0}e_{\pi_1\left(j\right)}$ and $y = v+ \sum_{j\in S_0}e_{\pi_2\left(j\right)}$. To bound below the probability of $(\Lambda^{\pi_1,\pi_2}_{S_0})^C$, we condition on the value of $\psi'(x)$ and then consider whether $\psi\left(z\right) - \psi'(x)  \in[-3,3]$ for any $z \in \Gamma(y)$. Note that we will just be considering atypical values of $\psi'(x)$. This means that our lower bound is very close to $0,$ but since we will be considering a large intersection of independent events, it suffices to give a lower bound that is not too close to $0$. Let
\[
c \in \left(\sqrt{\frac{5-4 \varepsilon}{3+4\varepsilon} (1-p)},\sqrt{\frac{5}{3} (1-p)}\right),
\]
so that $\frac{1}{6}+\frac{c^2}{2(1-p)} < 1$ and
\begin{align*}
\left( \frac{3}{4} + \varepsilon \right) \left( \frac{1}{2}+\frac{c^2}{2(1-p)} \right) & > \frac{3+4\varepsilon}{4} \left( \frac{1}{2}+\frac{\frac{5-4 \varepsilon}{3+4\varepsilon} (1-p)}{2(1-p)} \right) \\
& = \frac{3+4\varepsilon}{8} \cdot \frac{3+4\varepsilon+5-4 \varepsilon}{3+4\varepsilon} = 1,
\end{align*}
and then let $M = c\left(np \log (np) \right)^{\frac{1}{2}}$. Taking a union bound gives
	\begin{align}
		\bP\left[(\Lambda^{\pi_1,\pi_2}_{S_0})^C\right] &\ge \bP\left[\psi'\left(x\right) \ge M \mbox{ and} \left(\Lambda^{\pi_1,\pi_2}_{S_0}\right)^C\right] \nonumber \\
		&\ge \bP\left[\psi'\left(x\right) \ge M\right]\left(1-\sum_{z \in \Gamma(y)} \bP\left[\psi\left(z\right) - \psi'\left(x\right)  \in[-3,3]|\psi'\left(x\right) \ge M\right]\right) \nonumber \\
		&\ge \left(1-n\bP\left[\psi\left(z\right) - M \in [-3,3]\right]\right)\bP\left[\psi'\left(x\right) \ge M\right], \nonumber
	\end{align}
where the last inequality follows from the fact that $\psi(x)$ is a normalised binomial random variable with mean $0$. Since the same applies to $\psi'$, and recalling that $(3/4 + \varepsilon)(\frac{1+c^2}{2}) > 1$ and $p \ge n^{-1/4+\varepsilon}$, we therefore appeal to Lemma \ref{bounds} to get
	\begin{align*}
		\bP\left[(\Lambda^{\pi_1,\pi_2}_{S_0})^C\right] &\ge \left(1-n\Theta\left(\left(np\right)^{-\left(\frac{1}{2}+\frac{c^2}{2(1-p)}\right)}\right)\right)\Omega\left(\left(np\right)^{-\left(\frac{1}{6}+\frac{c^2}{2\left(1-p\right)}\right)}\right) \\
		&\ge \left(1-n\Theta\left(n^{-\left(3/4 +\varepsilon\right)\left(\frac{1}{2}+\frac{c^2}{2(1-p)}\right)}\right)\right)\Omega\left(\left(np\right)^{-\left(\frac{1}{6}+\frac{c^2}{2\left(1-p\right)}\right)}\right) \\
		&= \Omega\left(\left(np\right)^{-\left(\frac{1}{6}+\frac{c^2}{2(1-p)}\right)}\right).
	\end{align*}
Let $\Delta = \frac{\log np}{\log n} \in [3/4+\varepsilon,1)$, so that $np = n^{\Delta}$. We may express the above inequality as
	\begin{align*}
		\bP\left[\Lambda^{\pi_1,\pi_2}_{S_0}\right] = 1 - \Omega\left(n^{-\Delta\left(\frac{1}{6}+\frac{c^2}{2(1-p)}\right)}\right).
	\end{align*}
Putting this into \eqref{interB} we see
	\begin{align*}
		\bP\left[\bigcap_{S \in C''}B^{\pi_1,\pi_2}_S\right]&\le \left(1 - \Omega\left(n^{-\Delta\left(\frac{1}{6}+\frac{c^2}{2(1-p)}\right)}\right)\right)^{\delta n^4 -O\left(n^3 s \right)} \\
		&= \exp\left\{-\Omega\left(n^{4 - \Delta\left(\frac{1}{6}+\frac{c^2}{2(1-p)}\right)}\right)\right\}.
	\end{align*}
\end{itemize}

We have found our uniform upper bound $D$ and so \eqref{unionb1} gives
	\begin{align*}
		&\bP\left[\exists f\in \mathrm{Isom}^{(2)}(\chi) \ \mathrm{s.t.} \ f \in \mathrm{Diag}_{K'\log n }, f(u)f(v) \in E(Q_n)\right] \\
		&\qquad = \exp\left\{O\left(n^3 s\log n\right) - \Omega\left(n^{4 - \Delta \left(\frac{1}{6}+\frac{c^2}{2(1-p)}\right)}\right)\right\}.
	\end{align*}

Recall that $s = p^{-1}\log n = n^{1-\Delta} \log n$ and so
	\begin{align*}
		&\bP\left[\exists f\in \mathrm{Isom}^{(2)}(\chi) \ \mathrm{s.t.} \ f \in \mathrm{Diag}_{K'\log n }, f(u)f(v) \in E(Q_n)\right] \\
		&\qquad= \exp\left\{O\left(n^{4-\Delta} \log^2 n\right) - \Omega\left(n^{4 - \Delta\left(\frac{1}{6}+\frac{c^2}{2(1-p)}\right)}\right)\right\}.
	\end{align*}

We chose $c$ so that $\frac{1}{6}+\frac{c^2}{2(1-p)} < 1$ and so $n^{4-\Delta} \log^2 n = o \left(n^{4 - \Delta\left(\frac{1}{6}+\frac{c^2}{2(1-p)}\right)} \right)$. As we already observed, for $\chi \circ f^{-1} \not\cong \chi$, there must be a pair of non-adjacent vertices $u$ and $v$ such that $f(u)f(v) \in E(Q_n)$. We have fewer than $2^{2n}$ choices for $u$ and $v$, and so taking a union bound gives
	\begin{align*}
		&\bP\left[\exists f \in \mathrm{Isom}^{(2)}(\chi) \ \mathrm{s.t.} \ f\in \mathrm{Diag}_{K'\log n },\chi \circ f^{-1} \not\cong \chi\right] \\
		&\qquad = \exp\left\{O(n)+O\left(n^{4-\Delta} \log^2 n\right) - \Omega\left(n^{4 - \Delta\left(\frac{1}{6}+\frac{c^2}{2(1-p)}\right)}\right)\right\} \\
		&\qquad = o(1).
	\end{align*}
Finally, we can conclude that $\bP\left[\chi \ \mathrm{is} \mbox{ $2$-indistinguishable}\right] = o(1)$.
\end{proof}

\section{Proof of Theorem \ref{1ball}}\label{1ballsec}
As with Theorem \ref{main}, we prove Theorem \ref{1ball} by combining some of the probabilistic and structural results already proven. We start off with a lemma to discount bijections which map large parts of neighbourhoods to neighbourhoods.

\begin{lem}\label{matching}
For any $K>0$, there exists a constant $C = C(K)$ such that the following holds: Let $q(n) \ge n^{2+C\log^{-\frac{1}{2}} n}$, and let $\chi$ be a random $q$-colouring of the hypercube $Q_n$. Then with high probability, there does not exist a bijection $f \in \mathrm{Local}_{n(1-K\log^{-\frac{1}{2}} n)}$ and a pair of non-adjacent vertices $u,v$ such that $f \in \mathrm{Isom}^{(1)}(\chi)$ and $f(u)f(v) \in E(Q_n)$.
\end{lem}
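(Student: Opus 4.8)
\emph{Plan.} I would follow the blueprint of the end of Section~\ref{combination}, adapted to $1$-balls, leaning on the fact that when $q$ is this large a single ``colour-matching'' coincidence is very unlikely. Fix $K>0$, set $s=n(1-K\log^{-1/2}n)$ and $m=n-s=Kn\log^{-1/2}n$, and suppose for contradiction that with probability bounded away from $0$ there exist $f\in\mathrm{Local}_{s}\cap\mathrm{Isom}^{(1)}(\chi)$ and non-adjacent $u,v$ with $f(u)f(v)\in E(Q_n)$. Since $Q_n$ is bipartite, $f(u)$ and $f(v)$ have no common neighbour, so after relabelling coordinates we may take $f(v)=f(u)+e_1$. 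Fix a dual $g$ of $f$; by Lemma~\ref{a.l.inverse}, $f^{-1}$ is also $s$-approximately local with dual $g^{-1}$. Recall that $f\in\mathrm{Isom}^{(1)}(\chi)$ says precisely that for every vertex $x$ the colour multiset of $\chi$ on $\Gamma(x)$ equals the colour multiset of $\chi$ on $f^{-1}(\Gamma(f(x)))$.

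The first step is a clean reduction using Lemma~\ref{1strongunique}. Fix any vertex $x$. Since $f^{-1}\in\mathrm{Local}_{s}$ with dual $g^{-1}$, at least $m$ of the $n$ elements of $f^{-1}(\Gamma(f(x)))$ lie in $\Gamma(g^{-1}(f(x)))$; their colours form a common sub-multiset of the neighbour-colour multisets of $\chi^{(1)}(x)$ and of $\chi^{(1)}(g^{-1}(f(x)))$, so $d\bigl(\chi^{(1)}(x),\chi^{(1)}(g^{-1}(f(x)))\bigr)\le n+1-m$. On the (likely) event that the conclusion of Lemma~\ref{1strongunique} holds, and since $n+1-m<n-nK'\log^{-1}n$ for large $n$ (as $m=\Theta(n/\sqrt{\log n})\gg n/\log n$), this forces $g^{-1}(f(x))=x$, i.e.\ $g(x)=f(x)$. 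Hence, with high probability, every such $f$ is its own dual: $|f(\Gamma(x))\cap\Gamma(f(x))|\ge m$ for all $x$, so in particular $f$ is diagonal and the identities $f(x+e_i)=f(x)+e_{\pi_x(i)}$ hold on sets $D_x$ with $|D_x|\ge m$.

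Next I would build a shallow rigid structure. With $f$ diagonal and cluster at least $m$, the graph $G$ on $V(Q_n)$ with $xy\in E(G)$ iff $f(x)f(y)\in E(Q_n)$ has minimum degree at least $m$ --- far below the threshold needed for Lemma~\ref{krigid} to say anything --- so one argues by hand, propagating the identities $f(x+e_i)=f(x)+e_{\pi_x(i)}$ from one coordinate flip to two and then three, exactly as in the inductive computation \eqref{star} but only up to dimension three, to obtain sets $R_3(u),R_3(v)$ of size $\Omega(m^3)$ on which $f$ acts linearly through permutations $\pi_u,\pi_v$. Using that $u\in f^{-1}(\Gamma(f(v)))$ and $v\in f^{-1}(\Gamma(f(u)))$, and splitting into the cases $d(u,v)=2$, $d(u,v)=3$, $d(u,v)\ge 4$ exactly as in Cases~A--C of Section~\ref{combination}, one extracts a family $C'\subseteq\binom{[n]}{3}$ of size $\Omega(m^3)$ together with, for each $T\in C'$ (with $1\notin T$), vertices $a_T$ near $u$ and $b_T$ near $v$ --- obtained by translating $u$, resp.\ $v$, by $\sum_{i\in T}e_{\pi_u^{-1}(i)}$, resp.\ $\sum_{i\in T}e_{\pi_v^{-1}(i)}$ --- at distance at least $3$, for which $f(u)f(v)\in E(Q_n)$ and \eqref{star}-type rigidity force $\chi(a_T)$ to occur among the colours of $\Gamma(b_T)$.

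Finally, the union bound. As in Section~\ref{combination} one uses Proposition~\ref{greedycovering} --- now with $3$-spread families of $3$-subsets of $[n]$ (pairwise intersection at most $1$), whose translated $1$-balls are pairwise disjoint --- together with pigeonhole to dilute the loss, pinning down a $3$-spread tester $C''\subseteq C'$ of size $\Omega(m^2)=\Omega(n^2\log^{-1}n)$ on which the events ``$\chi(a_T)$ occurs among the colours of $\Gamma(b_T)$'', $T\in C''$, involve pairwise disjoint sets of $\chi$-values, hence are mutually independent, each of probability at most $n/q$. For fixed $u,v$ and fixed $\pi_u,\pi_v$ this gives probability at most $(n/q)^{|C''|}$; since $\log(q/n)\ge(1+C\log^{-1/2}n)\log n$ this is $\exp\{-\Omega(n^2)-\Omega(Cn^2\log^{-1/2}n)\}$. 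A union bound over the at most $4^n$ choices of $(u,v)$, the $\exp\{O(n\log n)\}$ choices of $\pi_u,\pi_v$, the polynomially many $3$-spread families, and the tester choices then gives $o(1)$ once $C=C(K)$ is large (the precise requirement $q\ge n^{2+C\log^{-1/2}n}$ is dictated by balancing this bound). I expect the main obstacle to be the third step: the weak locality here (cluster only $\Theta(n/\sqrt{\log n})$) makes $G$ a sparse, possibly disconnected graph, so one can push the rigid layers $R_k$ only to dimension two or three and must carefully control the $O(n(n-m))=O(n^2)$-type losses --- including the fact that the good-direction sets $D_u,D_v$ need not overlap, which forces one to transport the structure around $u$ over to $v$ via the edge $f(u)f(v)$ rather than intersect --- so that $C'$ and $C''$ retain size $\omega(n\log n)$, which is the least the union bound can absorb.
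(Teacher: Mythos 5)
Your first reduction --- using Lemma~\ref{1strongunique} to force any $f\in\mathrm{Isom}^{(1)}(\chi)\cap\mathrm{Local}_{M}$, $M=n(1-K\log^{-1/2}n)$, to be its own unique dual --- is exactly the first stage of the paper's argument and is fine. After that point your proposal departs from the paper, and the departure contains a genuine gap in the step you yourself flag as the ``main obstacle.'' You want to build rigid layers $R_2,R_3$ around $u$ and $v$ of sizes $\Omega(m^2),\Omega(m^3)$ with $m=Kn\log^{-1/2}n$, in imitation of the $2$-ball machinery of Section~\ref{combination}. But the auxiliary graph $G$ with $xy\in E(G)$ iff $f(x)f(y)\in E(Q_n)$ has only minimum degree $m\ll n/2$. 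Placing $u+e_i+e_j$ in $R_2(u)$ requires $j$ to lie in the good-direction set of $u+e_i$ and $i$ in that of $u+e_j$, in addition to $i,j$ lying in that of $u$; three subsets of $[n]$ of size $m<n/2$ can be pairwise disjoint, and nothing in the self-dual hypothesis forces any overlap, so $R_2(u)$ may be empty. The ``transport across the edge $f(u)f(v)$'' you gesture at would need to create coherence between $D_u$ and $D_{u+e_i}$, and no such argument is given or, as far as I can see, available.

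The paper sidesteps all of this: no layer beyond the first is needed. Once $f$ is self-dual, take any $w\in\Gamma(u)$ with $f(w)\in\Gamma(f(u))$ and $d(w,v)\neq 2$; there are at least $m-3$ such $w$. Since $u,v$ are non-adjacent, $w\neq v$, so $f(w)$ is a neighbour of $f(u)$ distinct from $f(v)$, hence at Hamming distance two from $f(v)$, and the two common neighbours of $f(w)$ and $f(v)$ are $f(u)$ and some vertex $f(v)+e_{i_w}$. The $1$-ball condition $\chi^{(1)}(w)\cong\chi_f^{(1)}(f(w))$ at this single vertex $w$ already forces $\chi_f(f(v)+e_{i_w})$ to appear among the colours of $\chi$ on $B_1(w)\subseteq B_2(u)$; converting through the $1$-ball condition at $v$, this says $\chi(v+e_{\pi(i_w)})\in\chi(Y)$ with $Y=\Gamma^2(u)\setminus\Gamma(v)$. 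The restrictions $\chi|_{\Gamma(v)}$ and $\chi|_Y$ are independent, $|\chi(Y)|\le n^2$, and the $\pi(i_w)$ are distinct, so the event costs at most $(n^2/q)^{\,m/2}$, and a union bound over the $\le 2^{2n}$ pairs $u,v$ and the $\binom{n}{m/2}$ index sets closes the argument for $q\ge n^{2+C\log^{-1/2}n}$ with $C=C(K)$ large. The observation you are missing is that a \emph{single} neighbour $w$ of $u$ whose image is clustered next to $f(u)$ already imposes a direct colour constraint on $\Gamma(v)$ through the edge $f(u)f(v)$; the deep rigidity that was essential for $2$-balls is simply unnecessary here.
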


It will be useful in the proof to introduce the following piece of notation: 
\begin{defn}\label{selfdual}
For a $s(n)$-approximately local bijection $f$, we say it is \em self-dual \em if it is its own dual and this dual is unique, i.e. if $f_{\star} = f$.
\end{defn}

For a natural number $s = s(n)$, let $\mathrm{Self}_s$ be the set of self-dual bijections in $\mathrm{Local}_s$, i.e. let $\mathrm{Self}_s = \left\{f \in \mathrm{Local}_s : f_{\star} = f\right\}$.

\begin{proof}[Proof of Lemma \ref{matching}]
Let $K>0$, let $C > 0$ be a constant to be defined later, and let $q(n) \ge n^{2+C\log^{-\frac{1}{2}} n}$. For ease of notation, let $M = n(1-K\log^{-\frac{1}{2}} n)$. Let $\chi$ be a random $q$-colouring of the hypercube $Q_n$. First suppose that there exists a bijection $f \in \mathrm{Local}_M\setminus \mathrm{Self}_M$ such that $f \in \mathrm{Isom}^{(1)}(\chi)$. Let $f_{\star}$ be a dual of $f$ (note that since $M > n/2$, there may not be a unique dual).

Pick $w \in V(Q_n)$ such that $f_{\star}(w) \neq f(w)$. Then $|\Gamma(w) \cap f^{-1}(\Gamma(f_{\star}(w)))| \ge Kn\log^{-\frac{1}{2}} n$, since $f \in \mathrm{Local}_M$, and so $d(\chi^{(1)}(f^{-1}(f_{\star}(w))),\chi^{(1)}(w)) \le n(1-K\log^{-\frac{1}{2}} n)$. Since we assumed that $f(w) \neq f_{\star}(w)$, we see that there must exist some $x \neq y \in V(Q_n)$ such that $d(\chi^{(1)}(x),\chi^{(1)}(y)) \le n(1-K\log^{-\frac{1}{2}} n)$. By Lemma \ref{1strongunique}, the probability of this occurring is $o(1)$ and so
	\begin{align*}
		\bP\left[\exists f \in  \mathrm{Isom}^{(2)}(\chi)\ \mathrm{s.t.} \ f \in \mathrm{Local}_M \setminus \mathrm{Self}_M\right] = o(1).
	\end{align*}

Pick two non-adjacent vertices $u$ and $v$. Suppose that there exists a bijection $f \in \mathrm{Self}_M$ such that $f \in \mathrm{Isom}^{(1)}(\chi)$ and $f(u)f(v) \in V(Q_n)$, and let
	\begin{align*}
		U = \left\{w \in \Gamma(u) : f(w) \in \Gamma(f(u)), d(w,v) \neq 2\right\}.
	\end{align*}
Recall that $u$ and $v$ are non-adjacent vertices, and so $|\left\{w \in \Gamma(u) : d(w,v) = 2\right\}| \le 3$. Also consider that $f \in \mathrm{Self}_M$ and so $|U| \ge n-M-3 = Kn\log^{-\frac{1}{2}} n-3$.

For each $w \in U$, $f(w)$ is at distance $2$ from $f(v)$ in the hypercube and so there is a distinct $i_w \in [n]$ such that
	\begin{align*}
		\Gamma\left(f(v)\right) \cap \Gamma\left(f(w)\right) = \left\{f(u),f(v)+e_{i_w}\right\}.
	\end{align*}
Recall that $\chi^{(1)}(w)\cong \chi_f^{(1)}(f(w))$ and so $\chi_f\left(f(v) + e_{i_w}\right) \in \chi\left(\Gamma(w)\setminus\{u\}\right).$ Let $Y = \Gamma^2(u) \setminus \Gamma(v)$. For each $w \in U,$ $\Gamma(w)\setminus\{u\} \subseteq Y$ since $w \in \Gamma(u)$ and $d(v,w) \neq 2$. Therefore $\chi_f\left(f(v) + e_{i_w}\right) \in \chi(Y)$ for all $w \in U$.

Since $\chi_f^{(1)}(f(v)) \cong \chi^{(1)}(v)$, there exists a permutation $\pi$ such that $\chi(v + e_{\pi(i)}) = \chi_f(f(v)+e_i)$ for all $i\in [n]$. But then $\chi(v + e_{\pi(i_w)}) = \chi_f\left(f(v) + e_{i_w}\right) \in \chi(Y)$ for all $w \in U$. Then there exists a set $T_U \subseteq [n]$ of size $\frac{K}{2}n\log^{-\frac{1}{2}} n$ such that $\chi(v+e_i) \in \chi(Y)$ for all $i\in T_U$. Therefore
	\begin{align}
		&\bP\left[\exists f \in \mathrm{Isom}^{(1)}(\chi) \ \mathrm{s.t.} \ f \in \mathrm{Self}_M, f(u)f(v) \in E(Q_n)\right] \nonumber \\
		&\le \bP\left[\exists T_U \in [n]^{\left(\frac{K}{2}n\log^{-\frac{1}{2}} n\right)} \ \mathrm{s.t.} \ \forall i\in T_U \ \chi(v+e_i) \in \chi(Y)\right]. \label{matchingspot1}
	\end{align}
Now $(\chi(v+e_i))_{i \in [n]}$ and $(\chi(x))_{x \in Y}$ are independent families of independent $\mathrm{Unif}([q])$ random variables and so for an arbitrary $T \in [n]^{(\frac{K}{2}n\log^{-\frac{1}{2}} n)}$
	\begin{align*}
		\bP\left[\forall i\in T \ \chi(v+e_i) \in \chi(Y) \ | \ \chi(Y)\right\} &= \prod_{i\in T} \bP\left[\chi(v+e_i) \in \chi(Y) \ | \ \chi(Y)\right] \\
		&= \left(\frac{|\chi(Y)|}{q}\right)^{\frac{K}{2}n\log^{-\frac{1}{2}} n} \\
		&\le \left(\frac{n^2}{q}\right)^{\frac{K}{2}n\log^{-\frac{1}{2}} n}.
	\end{align*}
We can take an expectation over $\chi(Y)$ to get
	\begin{align*}
		\bP\left[\forall i\in T \ \chi(v+e_i) \in \chi(Y)\right] \le  \left(\frac{n^2}{q}\right)^{\frac{K}{2}n\log^{-\frac{1}{2}} n}.
	\end{align*}
We can then apply a union bound to \eqref{matchingspot1} to get the following bound
	\begin{align*}
		\bP\left[\exists f \in \mathrm{Isom}^{(1)}(\chi) \ \mathrm{s.t.} \ f \in \mathrm{Self}_M, f(u)f(v) \in E(Q_n)\right] &\le \binom{n}{\frac{K}{2}n\log^{-\frac{1}{2}} n}\left(\frac{n^2}{q}\right)^{\frac{K}{2}n\log^{-\frac{1}{2}} n} \\
		&\le \left(\frac{en}{\frac{K}{2}n\log^{-\frac{1}{2}} n}\right)^{\frac{K}{2}n\log^{-\frac{1}{2}} n} \left(\frac{n^2}{q}\right)^{\frac{K}{2}n\log^{-\frac{1}{2}} n} \\
		&= \left(\frac{2en^2\log^{1/2}n}{Kq}\right)^{\frac{K}{2}n\log^{-\frac{1}{2}} n}. \\
	\end{align*}
Define $D = \frac{K}{2} \log \left(\frac{2e}{K}\right) $, a constant depending on $K$. Recall that $q \ge n^{2+C\log^{-1/2}n}$, and so the bound above is at most 
	\begin{align*}
		\left(\frac{2e}{K} n^{-C\log^{-1/2}n}\log^{1/2}n\right)^{\frac{K}{2}n\log^{-\frac{1}{2}} n} = \exp\left\{Dn\log^{-1/2}n - \frac{CK}{2}n + \frac{K}{4}n\log^{-1/2}n\log\left(\log n\right) \right\}.
	\end{align*}
Taking $C$ sufficiently large we get
	\begin{align*}
		\bP\left[\exists f \in \mathrm{Isom}^{(1)}(\chi) \ \mathrm{s.t.} \ f \in \mathrm{Self}_M, f(u)f(v) \in E(Q_n)\right] \le e^{-\frac{CK}{4}n}.
	\end{align*}

We have fewer than $2^{2n}$ choices for non-adjacent vertices $u$ and $v$ and so by a union bound,
	\begin{align*}
		\bP\left[\exists uv \not\in E(Q_n), f \in \mathrm{Isom}^{(1)}(\chi) \ \mathrm{s.t.} \ f \in \mathrm{Self}_M, f(u)f(v) \in E(Q_n)\right] \le 2^{2n}e^{-\frac{CK}{4}n}.
	\end{align*}
This upper bound is $o(1)$ if $C$ is large enough and so
	\begin{align*}
		\bP\left[\exists uv \not\in E(Q_n), f \in \mathrm{Isom}^{(1)}(\chi) \ \mathrm{s.t.} f \in \mathrm{Local}_M, f(u)f(v) \in E(Q_n)\right] = o(1).
	\end{align*}
\end{proof}

We are now in a position to prove Theorem \ref{1ball}.

\begin{proof}[Proof of Theorem \ref{1ball}]
Let $K,K_1,K_2 > 0$ be constants to be defined later, and then let $\varepsilon (n) = \frac{1}{2} - K_2\log^{-\frac{1}{2}} n$ and $q \ge K_1n^{2 + 2K_2\log^{-\frac{1}{2}} n}$. Let $\chi$ be a random $q$-colouring of the hypercube $Q_n$. By Lemma \ref{1scott}, if $K_1$ is sufficiently large then
	\begin{align*}
		\bP\left[\exists f \in \mathrm{Isom}^{(1)}(\chi) \ \mathrm{s.t.} \ f^{-1} \not\in \mathrm{Cluster}_{\varepsilon(n)n^2}^1 \right] = o(1),
	\end{align*}
and so
	\begin{align*}
		\bP\left[\chi \ \mathrm{is} \ \mbox{$1$-indist.}\right] = \bP\left[\exists f \in \mathrm{Isom}^{(1)}(\chi) \ \mathrm{s.t.} \ f^{-1}\in \mathrm{Cluster}^1_{\varepsilon(n)n^2}, \chi \circ f^{-1} \not\cong \chi\right] + o(1).
	\end{align*}
	
Now suppose that there exists a bijection $f \in \mathrm{Isom}^{(1)}(\chi)$ with $f^{-1} \in \mathrm{Cluster}^1_{\varepsilon(n)n^2} \setminus \mathrm{Mono}_{\varepsilon(n)n^2}^{Kn\log^{-1} n}$. Since $f^{-1} \not\in \mathrm{Mono}_{\varepsilon(n)n^2}^{Kn\log^{-1} n}$ there must exist vertices $v,w_1,w_2$ such that $w_1 \neq w_2$ and $|f^{-1}(\Gamma(v))\cap \Gamma(w_i)| > Kn\log n ^{-1}$ for $i = 1,2$. Note that $|f^{-1}(\Gamma(v))\cap \Gamma(w_i)| > Kn\log n ^{-1}$ implies that $d(\chi_f^{(1)}(v),\chi^{(1)}(w_i)) \le n - K\frac{n}{\log n }$ for $i=1,2$. Recall that $\chi_f^{(1)}(v) = \chi^{(1)}(f^{-1}(v))$ and so $d(\chi^{(1)}(f^{-1}(v)),\chi^{(1)}(w_i)) \le n - K\frac{n}{\log n }$ for $i=1,2$. It cannot be the case that $w_1=w_2 = f^{-1}(v)$ and so we have found two vertices $u\neq x$ such that $d(\chi^{(1)}(u),\chi^{(1)}(x)) \le n-K\frac{n}{\log n }$. By Lemma \ref{1strongunique}, if $K$ is sufficiently large, this occurs with probability $o(1)$ and so
	\begin{align}
		\bP\left[\chi \ \mathrm{is} \ \mbox{$1$-indist.}\right] = \bP\left[\exists f \in \mathrm{Isom}^{(1)}(\chi) \ \mathrm{s.t.} \ f^{-1}\in \mathrm{Mono}_{\varepsilon(n)n^2}^{Kn\log^{-1} n}, \chi \circ f^{-1} \not\cong \chi\right] + o(1). \nonumber
	\end{align}

In a similar fashion, we could show that there cannot exist vertices $v_1,v_2,w$ such that $v_1 \neq v_2$ and $|f^{-1}(\Gamma(v_i)) \cap	\Gamma(w)| > K n \log^{-1}n$ for $i = 1,2$. Now, recall that by Corollary \ref{1ftoj}
	\begin{align*}
		\mathrm{Mono}_{\varepsilon(n)n^2}^{Kn\log^{-1} n} \subseteq \mathrm{Local}_{y(n)},
	\end{align*}
where
	\begin{align*}
		y\left(n\right) &= n\left(1 - \left(1 - 2\varepsilon\left(n\right) - 14 \left(\frac{K}{\log n }\right)^{\frac{1}{2}}\right)^{\frac{1}{2}}\right) \\
		&= n\left(1 - \left(2K_2-14K^{\frac{1}{2}}\right)^{\frac{1}{2}}\log n ^{-\frac{1}{4}}\right).
	\end{align*}
So then if we take $K_2 > 8K^{\frac{1}{2}}$, 
	\begin{align*}
		y\left(n\right) \le n\left(1 - K^{\frac{1}{4}} \log^{-\frac{1}{4}} n\right),
	\end{align*}
and then since $\mathrm{Local}_R \subseteq \mathrm{Local}_T$ when $R \le T$, we see that
	\begin{align*}
		\mathrm{Mono}_{\varepsilon\left(n\right)n^2}^{Kn\log^{-1} n} \subseteq \mathrm{Local}_{n\left(1 - K^{\frac{1}{4}} \log^{-\frac{1}{4}} n\right)},
	\end{align*}
and any $f^{-1} \in \mathrm{Mono}_{\varepsilon\left(n\right)n^2}^{Kn\log^{-1} n}$ has a unique dual $g$.

Suppose that $g$ is not bijective. Then there exist vertices $v_1,v_2,w$ such that $v_1 \neq v_2$ and $|f^{-1}(\Gamma(v_i)) \cap \Gamma(w)| > K n \log^{-1}n$ for $i = 1,2$. By Lemma \ref{1strongunique} this cannot be the case, so $g$ must be bijective.

Since $f^{-1} \in \mathrm{Local}_{n\left(1 - K^{\frac{1}{4}} \log^{-\frac{1}{4}} n\right)}$ with bijective dual $g$, we may apply Lemma \ref{a.l.inverse} to get
	\begin{align*}
		&\bP\left[\chi \ \mathrm{is} \ \mbox{$1$-indist.}\right] = \bP\left[f \in \mathrm{Isom}^{(1)}(\chi) \ \mathrm{s.t.} \ f^{-1}\in \mathrm{Mono}_{\varepsilon(n)n^2}^{Kn\log^{-1} n}, \chi \circ f^{-1} \not\cong \chi\right] + o(1) \\
		& \qquad \le \bP\left[\exists f \in \mathrm{Isom}^{\left(1\right)}\left(\chi\right) \ \mathrm{s.t.} \ f \in \mathrm{Local}_{n\left(1- K^{\frac{1}{4}} \log^{-\frac{1}{4}} n\right)}, \chi \circ f^{-1} \not\cong \chi\right] + o\left(1\right).
	\end{align*}
Finally, since $\log^{-\frac{1}{2}} n = o\left(\log n ^{-\frac{1}{4}}\right)$, we may apply Lemma \ref{matching} to conclude that
	\begin{align*}
		\bP\left[\exists f \in \mathrm{Isom}^{\left(1\right)}\left(\chi\right) \ \mathrm{s.t.} \ f\in \mathrm{Local}_{n\left(1 - K^{\frac{1}{4}} \log^{-\frac{1}{4}} n\right)}, \chi \circ f^{-1} \not\cong \chi\right] = o\left(1\right),
	\end{align*}
and so $\bP\left[\chi \ \mathrm{is} \ \mbox{$1$-indistinguishable}\right] = o\left(1\right)$.
\end{proof}

\section{Some Further Questions}\label{waffle}
In Theorem \ref{main}, we have the condition that $p\ge n^{-1/4+\epsilon}$.  How small can $p$ be taken here?  Is there a threshold function $\tau$ such that if ${p}/{\tau} \rightarrow \infty$, then a random $(p,1-p)$-colouring is $2$-distinguishable with high probabiilty, but the same is not true if $p = o(\tau)$? More generally, given a function $p$, how large must $r$ be so that a random $(p,1-p)$-colouring is $r$-distinguishable with high probability?

It is natural to ask whether Theorem \ref{main} extends to colourings with more colours.  We note the following corollary of Theorem \ref{main}.

\begin{cor}\label{BBC1966}
Let $\varepsilon > 0$ and let $\mu_n$ be a sequence of probability mass functions on the natural numbers for which $1-\mu_n(m) \ge n^{-1/4 + \varepsilon}$ uniformly over $m \in \bN$ for sufficiently large~$n$ (i.e. there is no single colour with probability mass too close to 1). Let $\chi$ be a random $\mu_n$-colouring of the hypercube $Q_n$. Then with high probability, $\chi$ is $2$-distinguishable.
\end{cor}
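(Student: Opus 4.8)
The plan is to deduce Corollary~\ref{BBC1966} from Theorem~\ref{main} by collapsing the colour set to two colours. First I would record the elementary fact that, writing $\alpha = \alpha(n) = n^{-1/4+\varepsilon}$ (so $\alpha \le 1/4$ for $n$ large), the hypothesis $1-\mu_n(m) \ge \alpha$ for all $m$ guarantees a partition $\bN = S_0 \cup S_1$ with $\alpha \le \mu_n(S_1) \le 1/2$: if some colour $m$ has $\mu_n(m) \ge \alpha$, take $S_1 = \{m\}$, noting $\mu_n(m) \le 1-\alpha$ by hypothesis (and relabel $S_0 \leftrightarrow S_1$ if $\mu_n(m) > 1/2$, using $\mu_n(\bN \setminus \{m\}) = 1 - \mu_n(m) \ge \alpha$); otherwise every atom is less than $\alpha$, and greedily adding colours to $S_1$ until $\mu_n(S_1) \ge \alpha$ terminates, with final mass below $2\alpha \le 1/2$ since the crossing atom has mass less than $\alpha$.

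Set $\phi \colon \bN \to \{0,1\}$ to be the indicator of $S_1$ and let $\psi = \phi \circ \chi$. Since the $\chi(v)$ are independent with law $\mu_n$, the $\psi(v)$ are independent with $\bP[\psi(v) = 1] = \mu_n(S_1) =: p$, so $\psi$ is a random $(p,1-p)$-colouring of $Q_n$ with $p = p(n) \in [\alpha(n), 1/2]$. In particular $p \ge n^{-1/4+\varepsilon}$ for $n$ large, so Theorem~\ref{main} applies to $\psi$: with high probability $\psi$ is $2$-distinguishable, which by the characterisation recalled in Section~\ref{defres} means precisely that with high probability every $f \in \mathrm{Isom}^{(2)}(\psi)$ is a graph automorphism of $Q_n$.

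It remains to observe that $\mathrm{Isom}^{(2)}(\chi) \subseteq \mathrm{Isom}^{(2)}(\psi)$. If $f \in \mathrm{Isom}^{(2)}(\chi)$ and $v \in V(Q_n)$, then there is a graph isomorphism $B_2(v) \to B_2(f(v))$ carrying the colouring $\chi$ to $\chi_f = \chi \circ f^{-1}$; postcomposing the colour identification with $\phi$, the same isomorphism carries $\psi = \phi \circ \chi$ to $\phi \circ \chi_f = \psi_f$, so $\psi^{(2)}(v) \cong \psi_f^{(2)}(f(v))$ and $f \in \mathrm{Isom}^{(2)}(\psi)$. Combining this with the previous paragraph, with high probability every $f \in \mathrm{Isom}^{(2)}(\chi)$ is a graph automorphism, i.e.\ (again by the characterisation in Section~\ref{defres}) $\chi$ is $2$-distinguishable with high probability.

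There is no real obstacle here beyond Theorem~\ref{main} itself; the only points requiring a moment's care are that the auxiliary colouring $\phi$ is permitted to depend on $n$ — harmless, since Theorem~\ref{main} allows $p = p(n)$ — and that replacing $\psi$ by its complement (used in the relabelling step) changes neither $\mathrm{Isom}^{(2)}$ nor the property of being an automorphism. The hypothesis that no single colour has mass exceeding $1-n^{-1/4+\varepsilon}$ is used exactly once, to ensure $p \ge n^{-1/4+\varepsilon}$ in the construction of $\phi$.
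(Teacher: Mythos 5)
Your proof is correct, and it starts exactly as the paper's own sketch does: coarsen the colour palette to two groups so that the resulting $2$-colouring $\psi$ is a $(p,1-p)$-colouring with $n^{-1/4+\varepsilon}\le p\le 1/2$, and invoke Theorem~\ref{main}. Your construction of the partition is more careful than the paper's one-line version and handles the edge cases (an unusually heavy atom, or all atoms very light) explicitly; that is a genuine improvement in rigour, not a different method.

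Where you diverge is in the ``lift back'' step. The paper's sketch says: having reconstructed $\chi'$ (your $\psi$), appeal to Lemma~\ref{strongunique} (uniqueness of $\chi'$-$2$-ball colourings) to deduce which $\chi$-$2$-ball sits on each vertex, and hence recover $\chi$. You instead observe the inclusion $\mathrm{Isom}^{(2)}(\chi)\subseteq\mathrm{Isom}^{(2)}(\psi)$ and transfer the conclusion ``every element of $\mathrm{Isom}^{(2)}(\psi)$ is an automorphism'' directly. This is cleaner and avoids a second appeal to Lemma~\ref{strongunique}, at the cost of a small technical point you should flag: the sentence in Section~\ref{defres} asserting that $\chi$ is $r$-indistinguishable \emph{if and only if} $\mathrm{Isom}^{(r)}(\chi)$ contains a non-automorphism is not quite an equivalence as written (a constant colouring is $r$-distinguishable yet every bijection lies in $\mathrm{Isom}^{(r)}$), and the direction you actually need --- ``$\psi$ $2$-distinguishable $\Rightarrow$ every $f\in\mathrm{Isom}^{(2)}(\psi)$ is an automorphism'' --- is exactly the questionable one. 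This is not a flaw in your logic so much as a latent imprecision in the paper, and it is harmless here: the \emph{proof} of Theorem~\ref{main} in fact establishes the stronger fact that, with high probability, every $f\in\mathrm{Isom}^{(2)}(\psi)$ is an automorphism (it successively shows that any such $f$ must lie in $\mathrm{Cluster}$, then $\mathrm{Local}$, then $\mathrm{Diag}$, and finally cannot send a non-edge to an edge). Citing the proof rather than the theorem statement plus the Section~\ref{defres} ``characterisation'' would make your argument airtight without changing its substance.
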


To prove this corollary, partition $\bN$ into two parts $A_n$ and $B_n$ for each $n$ so that $n^{-1/4+\varepsilon} \le \mu_n(A_n) \le \mu_n(B_n)$ for sufficiently large $n$. Then consider the colouring $\chi'$ where $\chi' = 0$ when $\chi \in A_n$ and $\chi'=1$ when $\chi \in B_n$. By Theorem \ref{main}, with high probability, we may reconstruct $\chi'$. From there we appeal to the uniqueness of local colourings of $2$--balls (see Lemma \ref{strongunique}) to recover $\chi$ from $\chi'$ with high probability.

It would be interesting to have better bounds on the values of $q$ for which a random $q$-colouring is $1$-distinguishable with high probability. We have an upper bound of the form $n^{2+o(1)}$ and a lower bound of form $\Omega(n)$; we expect $n^{1+o(1)}$ should be possible, and  Lemma \ref{1strongunique} shows that neighbourhoods are unique down to this range.  (It seems likely that this should be a monotone property, that is if a random $q$-colouring is $1$-distinguishable with high probability then the same be true for a random $(q+1)$-colouring, but we do not have a proof of this.)

Another interesting question is a different type of random jigsaw puzzle.
\begin{qu}
Let $q=q(n)$ be a positive integer, and let $V(Q_n)=S_1\cup\dots \cup S_q$ be a partition of the vertices of the cube into $q$ sets, chosen uniformly at random.
Suppose we are given each set $S_i$ up to an isometry.  When can the partition be reconstructed with high probability?
\end{qu}
An equivalent way to state this is the following: let $c$ be a random $q$-colouring of the vertices of $Q_n$, and suppose that $f:V(Q_n)\to V(Q_n)$ is a bijection such that, for every colour $k$, the restriction of $f$ to the vertices of colour $k$ is an isometry.  When is it almost surely the case that $f$ must be an isometry of the whole cube?  Of course, the interesting question is how large $q(n)$ can be.

Let us conclude by noting that there are other interesting questions about reconstructing colourings of the hypercube. For example, Keane and den Hollander \cite{KdH} asked when it is possible to reconstruct a colouring $c$ of a graph $G$ by observing $(c(X_n))_{n \in \bN}$, where $X_n$ is a random walk on the vertex set of $G$ (see also Benjamini and Kesten \cite{BK}). For the cube, not all colourings are reconstructible in this way, but for random colourings the problem is very much open (see Gross and Grupel \cite{GG} for the problem and discussion, and van Hintum \cite{Hintum} for further constructions).

\bibliographystyle{abbrv}

\newcommand{\SortNoop}[1]{}

\end{document}